\newcommand{\myhref}[1]{%
	\ifboolexpr{%
		test {\ifhyperref}
		and
		not test {\iftoggle{bbx:eprint}}
		and
		not test {\iftoggle{bbx:url}}
	}
	{\href{\doiorurl}{#1}}
	{#1}%
}
\crefname{subsection}{\S\!}{subsections}
\theoremstyle{plain}
\newtheorem{theorem}{Theorem}[section]
\newtheorem{proposition}[theorem]{Proposition}
\newtheorem{lemma}[theorem]{Lemma}
\newtheorem{corollary}[theorem]{Corollary}
\newtheorem{question}[theorem]{Question}
\newtheorem*{question*}{Question}
\theoremstyle{definition}
\newtheorem{definition}[theorem]{Definition}
\newtheorem*{definition*}{Definition}
\newtheorem{construction}[theorem]{Construction}
\theoremstyle{remark}
\newtheorem{remark}[theorem]{Remark}
\newtheorem*{remark*}{Remark}
\newtheorem{example}[theorem]{Example}
\newtheorem*{example*}{Example}
\newtheorem{convention}[theorem]{Convention}
\newtheorem*{convention*}{Convention}
\newtheorem{notation}[theorem]{Notation}
\newtheorem*{notation*}{Notation}
\DeclareMathOperator{\bd}{\partial}
\newcommand{\ot}{\otimes}
\newcommand{\Z}{\mathbb{Z}}
\newcommand{\Sym}{\mathbb{S}}
\newcommand{\Cyc}{\mathbb{C}}
\newcommand{\Set}{\mathsf{Set}}
\newcommand{\Setp}{\mathsf{Set}_*}
\newcommand{\Mod}[1]{\mathsf{Mod}(#1)}
\newcommand{\Ch}[1]{\mathsf{Ch}(#1)}
\newcommand{\simplex}{\triangle}
\newcommand{\dd}{\vee}
\newcommand{\chains}[1][*]{\operatorname{N}_{#1}}
\newcommand{\cochains}[1][*]{\operatorname{N}^\dd_{#1}}
\newcommand{\uchains}[1][*]{\operatorname{C}_{#1}}
\newcommand{\rchains}[1][*]{\operatorname{N}^{r}_{#1}}
\newcommand{\cadenas}[1][*]{N_{#1}}
\newcommand{\ucadenas}[1][*]{C_{#1}}
\newcommand{\cocadenas}[1][*]{N^\dd_{#1}}
\newcommand{\ucocadenas}[1][*]{C^\dd_{#1}}
\newcommand{\rcadenas}[1][*]{N^{r}_{#1}}
\newcommand{\rucadenas}[1][*]{C^{r}_{#1}}
\newcommand{\rA}{\operatorname{L}}
\newcommand{\abel}{\operatorname{{\it L}}}
\newcommand{\Omd}[1][*]{\operatorname{\Omega}^{\dd}_{#1}}
\newcommand{\Om}[1][*]{\operatorname{\Omega}_{#1}}
\newcommand{\Omhatd}[1][*]{\hat{\operatorname{\Omega}}^{\dd}_{#1}}
\newcommand{\Omhat}[1][*]{\hat{\operatorname{\Omega}}_{#1}}
\newcommand{\rW}[1][*]{\operatorname{W}_{#1}}
\newcommand{\rWd}[1][*]{\operatorname{W}^\dd_{#1}}
\newcommand{\rWhat}[1][*]{\hat{\operatorname{W}}_{#1}}
\newcommand{\rWhatd}[1][*]{\hat{\operatorname{W}}^{\dd}_{#1}}
\newcommand{\W}[1][*]{W_{#1}}
\newcommand{\Wd}[1][*]{W^\dd_{#1}}
\newcommand{\Wst}[1][*]{W^{\mathrm{st}}_{#1}}
\DeclarePairedDelimiter\bars{\lvert}{\rvert}
\DeclarePairedDelimiter\set{\{}{\}}
\newcommand{\id}{\mathsf{id}}
\newcommand{\op}{{\mathrm{op}}}
\DeclareMathOperator*{\colim}{colim}
\newcommand{\xra}[1]{\xrightarrow{#1}}
\newcommand{\bF}{\mathbb{F}}
\newcommand{\bZ}{\mathbb{Z}}
\newcommand{\cC}{\mathcal{C}}
\newcommand{\cE}{\mathcal{E}}
\newcommand{\cP}{\mathcal{P}}
\newcommand{\sign}[1]{\operatorname{sign}(#1)}
\newcommand{\perm}{\pi}
\DeclareMathOperator{\DDD}{D}
\newcommand{\Psiom}{\operatorname{\check{\Psi}}}
\newcommand{\alex}{\Lambda}
\newcommand{\twist}{\operatorname{twist}}
\DeclareMathOperator{\barra}{|}
\DeclareMathOperator{\join}{\ast}
\DeclareMathOperator{\power}{P}
\DeclareMathOperator{\hotimes}{\hat{\otimes}}
\newcommand{\nf}{nf}
\newcommand{\f}{f}
\newcommand{\green}[1]{{\color{green} #1}}
\renewcommand{\d}{\operatorname{d}} 
\newcommand{\diff}{d} 
\newcommand{\s}{\operatorname{s}} 
\newcommand{\signo}{\mathfrak{s}}
\newcommand{\signot}{\mathfrak{t}}
\newcommand{\EG}{\mathrm{E}G}
\newcommand{\EC}{\mathrm{E}\Cyc}
\newcommand{\ES}{\mathrm{E}\Sym}
\newcommand{\ESst}{\mathrm{E}\Sym^{\mathrm{st}}}
\newcommand{\sd}{\operatorname{sd}}
\newcommand{\Psd}{\operatorname{P}}
\newcommand{\PsdSimp}{\Psd(\partial\asimplex^n)}
\newcommand{\Id}{\id}
\newcommand{\prev}[1]{}
\newcommand{\Sq}{\operatorname{Sq}}
\newcommand{\lra}{\longrightarrow}
\newcommand{\lla}{\longleftarrow}
\DeclareMathOperator{\hofib}{hofib}
\newcommand{\asimplex}{\triangle_{+}}
\newcommand{\kansimplex}{\triangle_{st}}
\newcommand{\sus}[1]{\operatorname{s}^{#1}}
\newcommand{\susp}[1]{\Sigma^{#1}}
\DeclareMathOperator{\ob}{ob}
\title{Connected power operations and simplicial Poincar\'e duality}
\author[Cantero-Mor\'an]{Federico~Cantero-Mor\'an}
\address{Departamento de Matem\'aticas, Universidad Aut\'onoma de Madrid \& ICMAT. Calle Francisco Tom\'as y Valiente, 7. 28049, Madrid, Spain.}
\email{\href{mailto:federico.cantero@uam.es}{federico.cantero@uam.es}}
\thanks{F.C. was supported by Grants SI3/PJI/2021-00505 from Comunidad de Madrid and PID2019-108936GB-C21 from the Spanish government.}
\author[Medina-Mardones]{Anibal~M.~Medina-Mardones}
\address{Department of Mathematics, Western University, Canada}
\email{\href{mailto:anibal.medina.mardones@uwo.ca}{anibal.medina.mardones@uwo.ca}}
\thanks{A.M. was partially supported for this project through grant ANR 20 CE40 0016 01 PROJET HighAGT}
\date{\today}
\subjclass[2020]{55N31,55P42,55R37,55S05,55U30}
\keywords{Augmented simplicial objects, spectra, Alexander duality, Steenrod power operations, cochain operations, pair subdivision, cellular diagonals}
\begin{document}
	\begin{center}
		||PREPRINT||
		\vskip 20pt
	\end{center}


\begin{abstract}
    We introduce a structure termed ``connected cyclic diagonal" on a chain complex, which induces stable power operations in its cohomology with the property that negative power operations consistently vanish. This chain level structure is useful to represent power operations for spectra, whose cohomology lacks a cup product. Using a Poincar\'e duality algebra structure on the integral chains of the standard augmented simplex, we provide an effective method for the construction of cyclic diagonals on a broad class of augmented simplicial objects.
\end{abstract}
	\maketitle
	\tableofcontents
 \newpage

\section{Introduction}\label{s:introduction}

\subsection{Steenrod power operations}

In 1946, Steenrod was looking for generalizations of the Hopf invariant when he found for $i \geq 0$ certain natural linear maps
\[
\Delta_i \colon \ucadenas(X) \to \ucadenas(X) \ot \ucadenas(X)
\]
on the singular chains of a space $X$, which extended the Alexander--Whitney diagonal $\Delta_0$.
These so-called \textit{cup-$i$ coproducts} satisfy $\bd\Delta_i + \Delta_i\bd = (1+\rho)\Delta_{i-1}$ for $i > 0$, where $\rho$ is the transposition of tensor factors, and give rise in mod~2 cohomology to the celebrated \emph{Steenrod squares}
\begin{equation}\label{eq:intro Sq}
	\begin{tikzcd}[column sep=small, row sep=0]
		\Sq^{k} \colon &[-15pt] H^*(X;\mathbb{F}_2) \arrow[r] & H^{*+k}(X;\mathbb{F}_2) \\
		& {[\alpha]} \arrow[r, mapsto] & {\big[(\alpha \otimes \alpha)\Delta_{k-\bars{\alpha}}\big]}.
	\end{tikzcd}
\end{equation}
The maps $\Delta_i$ effectively introduced by Steenrod in \cite{steenrod1947products} have been given alternative formulas in \cite{real1996computability,medina2021fast_sq} and are axiomatically characterized \cite{medina2022axiomatic}.
They can be reinterpreted as a single $\Cyc_2$-equivariant chain map
\[
\mu \colon W_*(2) \ot \ucadenas(X) \to \ucadenas(X)^{\ot 2},
\]
where $W_*(2)$ is the minimal free resolution of the ground ring as a module over the group ring of $\Cyc_2$, the cyclic group of order two.

Using the acyclic carrier theorem, Steenrod showed in \cite{steenrod1952reduced} the existence for each $r$ of an $\Sym_r$-equivariant chain map
\[
\mu \colon V_\ast(r) \ot \ucadenas(X) \to \ucadenas(X)^{\ot r}
\]
where $V_\ast(r)$ is a free resolution of the base ring as module over the group ring of the symmetric group $\Sym_r$, and used it to define a cohomology operation for each class in the homology of $\Sym_r$.

In \cite{steenrod1953cyclic}, Steenrod concentrated in those operations coming from classes induced by an inclusion $\Cyc_r \leq \Sym_r$ of the cyclic group of order $r$, with $r$ assumed to be a prime.
In this case, the inclusion is surjective in homology with mod~$r$ coefficients and the resulting operations in the mod~$r$ cohomology of a space are known as \textit{Steenrod power operations}.

\subsection{Unstable diagonals}

In \cite{may1970general}, May gave a general approach to power operations including Steenrod's power operations \cite{steenrod1962cohomology} and Dyer--Lashof operations on the homology of infinite loop spaces \cite{dyer62lashof}.
The basic structure used in \cite{may1970general} for the construction of power operations on a chain complex $\ucadenas$ is a $\Cyc_r$-equivariant chain map
\[
\mu \colon W_*(r) \ot \ucadenas \to \ucadenas^{\ot r}
\]
where $W_*(r)$ is the minimal free resolution of the ground ring $R$ as a module over the group ring of the cyclic group $\Cyc_r$:
\[
\Cyc_r\langle e_0\rangle
\overset{T}{\lla}
\Cyc_r\langle e_1\rangle 
\overset{N}{\lla}
\Cyc_r\langle e_2\rangle \overset{T}{\lla} 
\dotsb 
\]
We will refer to $\mu$ as an \textit{unstable $r$-cyclic diagonal} on $\ucadenas$, or simply an \textit{unstable $r$-diagonal}, saying that it is \textit{May--Steenrod} if it factors as
\[
\mu \colon W_*(r) \ot \ucadenas \xra{f \ot \id} V_*(r) \ot \ucadenas \xra{\nu} \ucadenas^{\ot r}
\]
with $f$ and $\nu$ a $\Cyc_r$- and a $\Sym_r$-equivariant map respectively.
We denote the maps $\mu(e_i \ot -) \colon \ucadenas \to \ucadenas^{\ot r}$ by $\Delta_{r,i}$ and refer to them as \textit{cup-$(r,i)$ coproducts}.

Generalizing the definition of $\Sq^i$ in \cref{eq:intro Sq} to a general prime $r$, a chain complex $\ucadenas$ with an unstable $r$-diagonal has power operations on its cohomology defined by
\begin{equation}\label{eq:intro P}
	\begin{tikzcd}[column sep=tiny, row sep=0]
		\power^i \colon &[-15pt] H^*(\ucadenas;\bF_r) \arrow[r] & H^{*+i}(\ucadenas;\bF_r) \\
		& {[\alpha]} \arrow[r, mapsto] & \big{[}(\alpha \ot \overset{r}{\dots} \ot \alpha)\Delta_{(r-1)\bars{\alpha}-i}\big{]},
	\end{tikzcd}
\end{equation}
where we have suppressed certain invertible coefficient.
We remark that the grading of this operations is different from the one used by Steenrod and May.

If the prime $r$ is odd and $\mu$ is May--Steenrod then
\[
\power^i = 0 \text{ unless } i = 2k(r-1) \text{ or }i = 2k(r-1)+1.
\]
Additionally, these operations often satisfy
\[
\power^i(x) = 0 \text{ for negative } i.
\]
For instance, this is the case with spaces.
Generally, verifying this property in practice is not straightforward.

An effective construction of cup-$(r,i)$ coproducts for simplicial sets depending solely on the combinatorics of simplices was introduced in \cite{medina2021may_st} and implemented in \cite{medina2021comch}.

\subsection{Stable diagonals}

Similarly to how Steenrod power operations on the cohomology of spaces motivated the definition of an unstable $r$-diagonal on a chain complex, we can consider spectra to motivate the definition of stable $r$-diagonals. Let $E = \{E_m\}_{m \geq 0}$ be a suspension spectrum. Its chains are given by
\[
\ucadenas(E) = \colim \left(\ucadenas(E_0) \to \dots \to \Sigma^{-m}\ucadenas(E_m) \to \Sigma^{-m-1} \ucadenas(E_{m+1}) \to \dotsb\right).
\]
The homotopy fibre of the norm map
\[
\W(r) \overset{N}{\lra} \Wd(r)
\]
is isomorphic to the acyclic unbounded chain complex $\Wst(r)$ defined as
\[
\dotsb \lla \Cyc_r\langle e_{-2}\rangle
\overset{T}{\lla}
\Cyc_r\langle e_{-1}\rangle
\overset{N}{\lla}
\Cyc_r\langle e_0\rangle
\overset{T}{\lla}
\Cyc_r\langle e_{1}\rangle
\overset{N}{\lla}
\Cyc_r\langle e_{2}\rangle
\overset{T}{\lla}
\dotsb,
\]
Any unstable diagonal suitably compatible with suspensions gives rise to an $\Cyc_r$-equivariant chain map
\[
\Wst(r) \ot \ucadenas(E) \lra \ucadenas(E)^{\ot r},
\]
which factors through a stable version $V_*^\mathrm{st}(r)$ of $V_*(r)$. We will see in Section \ref{s:suspension} that the effective construction of \cite{medina2021may_st} is indeed compatible with suspension.

Abstracting this example we define a \emph{stable $r$-cyclic diagonal} on a chain complex $\ucadenas$, or simply a \textit{stable $r$-diagonal} on $\ucadenas$, as a $\Cyc_r$-equivariant chain map
\[
\mu \colon \Wst(r) \ot \ucadenas \lra \ucadenas^{\ot r}.
\]
We say $\mu$ is \textit{May--Steenrod} if it factors through $V_*^\mathrm{st}$ as in the previous section.

The same definition of power operations given in \cref{eq:intro P} applies in this context.

\subsection{Connected diagonals}

The negative power operations of a stable $r$-diagonal $\mu$ are supported on a certain graded chain subcomplex $A$ of the bicomplex $\Wst \otimes \ucadenas$. We introduce the notion of \emph{connected $r$-cyclic diagonal}, or \textit{connected $r$-diagonal} for short, as a stable $r$-diagonal that vanishes on that subcomplex. In particular, its power operations are such that $\power^i = 0$ for $i<0$.

Let $\rWd(r)$ be the augmented dual of the right suspension of $\W(r)$:
\[
\dotsb
\Cyc_r\langle e_{-3}^\dd\rangle
\overset{-N}{\lla}
\Cyc_r\langle e_{-2}^\dd\rangle
\overset{T}{\lla}
\Cyc_r\langle e_{-1}^{\dd}\rangle
\overset{-N}{\lla}
R
\]
We will define a certain chain complex $\rWd(r)\hotimes \rucadenas$ isomorphic as a $\Cyc_r$-module to $\rWd(r)\otimes \ucadenas$, and isomorphic as a bigraded $\Cyc_r$-module to the quotient of $\Wst(r)\otimes \ucadenas$ by the subcomplex $A$, so that a connected $r$-diagonal is the same as a chain map
\[
\mu\colon \rWd(r)\hotimes \rucadenas\lra \ucadenas^{\otimes r}.
\]

\subsection{Connected diagonals for simplicial objects}

Let $\cC$ be a monoidal category with a natural diagonal $\Delta \colon X \to X^{\ot r}$ for each object $X$, and a strong monoidal functor $\abel \colon \cC \to \Mod{R}$ to the category of $R$-modules.
By virtue of $L$, every simplicial object in $\cC$ has an associated simplicial $R$-module, from which one gets a chain complex.
The main examples of $\cC$ are the categories $\Set$ and $\Setp$ of sets and pointed sets.
Our main result is an effective construction of connected $r$-diagonals from the following combinatorial structure.

\begin{definition}
	An \emph{$r$-cyclic straightening with duality} is a $\Cyc_r$-equivariant choice of element $x_\tau\in \tau$ for every proper non-empty subset $\tau \subset \{0,1,\dots,r-1\}$ such that the predecessor of $x_\tau$ is not in $\tau$.
\end{definition}

We mention that this structure can exist only if $r$ is prime.

\begin{theorem}\label{thm:main}
	If $r$ is an odd prime, each $r$-cyclic straightening with duality yields a natural connected $r$-diagonal on the chain complex of an augmented simplicial object in $\cC$.
\end{theorem}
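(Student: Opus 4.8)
The plan is to reduce the statement, via naturality and the strong monoidality of $\abel$, to a universal construction on the standard augmented simplices, and then to extract that construction explicitly from the straightening. For the reduction: an augmented simplicial object in $\cC$ is a colimit of representables, and the diagonal of $\cC$, the functor $\abel$, the tensor product, and the passage to chains are all compatible with those colimits; hence a natural connected $r$-diagonal on the chain complex of an arbitrary augmented simplicial object in $\cC$ amounts to the same datum as a $\Cyc_r$-equivariant chain map
\[
\mu^n\colon \rWd(r)\hotimes\rucadenas \lra \ucadenas^{\ot r},
\]
where $\ucadenas = \ucadenas(\asimplex^n)$ is the chain complex of the standard augmented $n$-simplex, specified for every $n\geq 0$ and natural in the augmented simplex category (coface and codegeneracy operators). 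So it suffices to produce such a family from an $r$-cyclic straightening with duality.

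Next I would use the Poincar\'e duality algebra structure on $\ucadenas(\asimplex^n)$ --- equivalently, Alexander duality on the boundary sphere $\partial\asimplex^n$ --- to identify the target $\ucadenas(\asimplex^n)^{\ot r}$ with a degree shift of the dual $\ucocadenas(\asimplex^n)^{\ot r}$. Under this identification, building $\mu^n$ turns into producing a coherent system of higher, equivariant products on $\ucadenas(\asimplex^n)$: the exterior-algebra multiplication furnished by the duality is the degree-zero seed, and the data indexed by $\rWd(r)$ records the necessary higher homotopies. The key bookkeeping point is that the unbounded negative part of the stable resolution is annihilated by the duality --- leaving exactly the bounded-above complex $\rWd(r)$ --- while the augmentation class in degree $-1$ is absorbed by replacing $\ucadenas$ with $\rucadenas$; this is precisely why the resulting diagonal is connected, with no separate argument needed.

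I would then define $\mu^n$ cell by cell from the straightening. Applying the diagonal of $\cC$ to an $n$-simplex and passing to a pair-subdivided model $\Psd(\asimplex^n)$ yields a cellular model for the diagonal $\Psd(\asimplex^n)\to(\asimplex^n)^{\times r}$ whose cells are indexed by flags of proper non-empty subsets $\tau\subseteq\{0,\dots,r-1\}$; at each step of such a flag the vertex $x_\tau\in\tau$ given by the straightening prescribes which of the $r$ tensor factors advance, and thereby pins down the component of $\mu^n$ of $\rWd(r)$-degree $p$, for every $p$, negative values included. Naturality in $\asimplex^n$ is visible from the combinatorial description, and $\Cyc_r$-equivariance of $\mu^n$ follows immediately from the $\Cyc_r$-equivariance of the assignment $\tau\mapsto x_\tau$ together with the standard $\Cyc_r$-actions on $\Psd$ and on the $r$-fold product.

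The hard part will be the chain map identity $\bd\mu^n = \mu^n\bd$. Expanding both sides, the contributions split into the simplicial boundary faces of the subdivided cells, which should telescope, and the norm- and transposition-terms coming from the differential of $\rWd(r)$; the cancellations that survive are governed exactly by the defining property of a straightening with duality, since requiring that the predecessor of $x_\tau$ not lie in $\tau$ is what makes the boundary contribution of the facet of $\Psd(\asimplex^n)$ opposite $x_\tau$ match the term produced by $e_{p-1}^\dd$. This is also the step that uses the hypotheses: a coherent system $\tau\mapsto x_\tau$ exists only for $r$ prime, and (as with the classical power operations) the odd-primality fixes the signs so that the telescoping closes up. With connectedness automatic, as explained above, reassembling the $\mu^n$ through the first step yields the desired natural connected $r$-diagonal; and if one also wants the May--Steenrod refinement, the same cellular chain taken over the full, non-cyclic subdivision provides the $\Sym_r$-equivariant factorization through $V_*^{\mathrm{st}}(r)$, with the compatibility with suspension supplied by Section~\ref{s:suspension}.
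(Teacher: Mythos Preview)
Your outline has the right large-scale shape---reduce to representables, invoke Alexander duality on the augmented simplex, and feed in the straightening through a subdivision---but it skips the structural core of the argument and, at the end, overstates what is actually proved.

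The paper does not build $\mu^n$ ``cell by cell'' by letting $x_\tau$ decide which tensor factor advances. The straightening enters much more indirectly: it produces an assemblage map $g\colon \sd\partial\simplex^{r-1}\to\partial\simplex^{r-1}$ (Lemma~\ref{lemma:straightening}), which via the pair subdivision yields a $\Cyc_r$-equivariant chain map $f\colon \chains(\partial\simplex^{r-1})^{\ot 2}\to \sus{r-1}\chains(\partial\simplex^{r-1})$ satisfying the three conditions of Construction~\ref{cons:1} (Proposition~\ref{prop:assemblage}). This $f$ is then used to define a ``shift'' endomorphism $S$ on a certain resolution $\Om(r)^{\nf}$ of $\Cyc_r$, and the connected diagonal is obtained from a \emph{recursively defined} $\Cyc_r$-equivariant chain map $\Psi\colon \Om(r)^{\nf}\to\rW(r)$ (Definition~\ref{def:psiom}), whose compatibility with a full-piece condition (Lemma~\ref{lemma:omegar}, Proposition~\ref{prop:condition}) is exactly where the duality hypothesis on the straightening---Condition~\eqref{cond:2}---is consumed. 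Your sketch contains none of this: no intermediate resolution $\Om(r)^{\nf}$, no map $\Phi$ in low degrees, no recursion, and no mechanism translating the ``predecessor not in $\tau$'' condition into the algebraic identity $f\circ\iota_2=\rho$ that makes Proposition~\ref{prop:condition} go through. Without these pieces the ``telescoping'' you allude to cannot be set up, let alone verified.

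Finally, your closing claim that the same cellular chain over the full subdivision furnishes an $\Sym_r$-equivariant factorization through $V_*^{\mathrm{st}}(r)$ is not established in the paper; a good notion of ``May--Steenrod connected $r$-diagonal'' is posed as an open question (Question after the example in \cref{s:connected}), so you should not assert it.
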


If $r=2$ the theorem still holds, but only with coefficients in $\bF_2$.
In that case there is a unique $r$-cyclic straightening with duality, and therefore our theorem yields a unique connected diagonal.
When applied to a non-augmented simplicial set, it coincides with the unstable diagonal axiomatized in \cite{medina2022axiomatic}.
If $r = 3$ there is again a unique $r$-cyclic straightening, and we have empirically tested that the unstable $3$-diagonal of \cite{medina2021may_st} coincides with the unstable diagonal induced by our connected diagonal.
If $r = 5$, there are four $r$-cyclic straightenings, which yield four connected diagonals, but none of them yield the unstable diagonals obtained in \cite{medina2021may_st}; as we will see in \cref{s:suspension}, they behave differently with respect to suspension.

This new viewpoint has some advantages:
First, the construction is very explicit since it is manageable to find the coefficient of any given summand. Second, our formulas hold for the chain complexes of augmented simplicial objects too ---these do not admit an unstable diagonal in general.
Third, computing $\power^i([x])$ for small $i$'s tends to be simpler for a connected diagonal than for an unstable diagonal, while computing them for $i$'s close to $(\bars{x}-1)r$ is simpler for an unstable diagonal than for a connected diagonal.

\subsection{Other connected diagonals}


As we have seen, unstable diagonals and connected diagonals on the chain complex of a pointed simplicial set that are compatible with suspension yield stable diagonals in the chain complex of a suspension spectrum. This is no longer true for other models of spectra.

One of these models of spectra are the cubes in the Burnside $2$-category of Lawson, Lipshitz and Sarkar \cite{LLS20} which model Khovanov spectra \cite{LS14}. A connected $2$-cyclic diagonal for these objects was found by the first author in \cite{cantero-moran2020khovanov}, thus giving rise to explicit formulas to compute Steenrod squares in Khovanov homology. In a future work we expect to use the formulas of Theorem \ref{thm:main} to find connected $r$-cyclic diagonals for $r$ an odd prime, thus yielding explicit formulas for the odd power operations in Khovanov homology.

For the reader acquainted with the surjection operad we remark that the operations that are involved in these connected $r$-cyclic diagonals would correspond to summands indexed by non-overlapping intervals.

\subsection{Poincar\'e duality on the augmented simplex}

A key observation facilitating our construction, which we deem interesting in its own right, is the following structure on the normalized chains of the standard augmented simplices:

\begin{theorem}
	The join product defines on $\chains(\asimplex^{n})$ the structure of a Poincar\'e duality algebra with unit the empty simplex and formal dimension $n+1$.
\end{theorem}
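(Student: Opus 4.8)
The plan is to describe explicitly the graded algebra $\chains(\asimplex^n)$ with its join product, exhibit an orientation class in top degree, and check that the induced pairing is perfect.

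First I would set up notation: the standard augmented simplex $\asimplex^n$ has one nondegenerate simplex $[S]$ for every subset $S \subseteq \{0,1,\dots,n\}$, including the empty simplex $\varnothing$ of degree $-1$, so that $\chains(\asimplex^n)$ is the free $R$-module on these generators with $\bars{[S]} = \bars{S}-1$. The join product sends $[S]\ot[T]$ to $\pm[S\cup T]$ when $S$ and $T$ are disjoint and to $0$ otherwise; I would pin down the sign via the shuffle/orientation convention (the sign of the permutation sorting the concatenation of the increasing lists of $S$ and $T$), verify associativity and graded-commutativity of this product, and note that $[\varnothing]$ is a two-sided unit. This identifies $\chains(\asimplex^n)$, as a graded algebra, with an exterior-type algebra $\Lambda(x_0,\dots,x_n)$ on degree-$0$ generators $x_i = [\{i\}]$, with $[S] = x_{i_1}\wedge\dots\wedge x_{i_k}$ for $S = \{i_1 < \dots < i_k\}$; the empty product is the unit in degree $-1$, and the top class is $\omega = [\{0,1,\dots,n\}] = x_0\wedge\dots\wedge x_n$ in degree $n$.

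Next I would define the augmentation-type functional $\epsilon\colon \chains(\asimplex^n) \to R$ picking out the coefficient of $\omega$ (equivalently, projection onto the top degree $n$), and consider the bilinear pairing $\langle a, b\rangle = \epsilon(a \cdot b)$. By construction $\langle [S],[T]\rangle$ is $\pm 1$ exactly when $T$ is the complement of $S$ in $\{0,\dots,n\}$ and $0$ otherwise; hence the Gram matrix of the pairing in the basis $\{[S]\}$ is, up to signs, a permutation matrix, so the pairing is perfect over $R = \Z$, and the associated map $\chains(\asimplex^n) \to \dual{\chains(\asimplex^n)}[-(n+1)]$ is an isomorphism of graded modules. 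Together with the fact that $\epsilon$ vanishes outside the top degree $n$ (i.e. below total degree $n+1$ in the $(n+1)$-shifted convention), this is precisely the assertion that $\chains(\asimplex^n)$ is a Poincar\'e duality algebra with unit $[\varnothing]$ and formal dimension $n+1$.

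The part requiring the most care is the bookkeeping of signs: one must check that the orientation signs built into the join product are consistent (associativity up to the correct Koszul signs, graded-commutativity) and that with those signs each complementary pair $\langle [S],[S^c]\rangle$ is a unit --- a priori it is $\pm 1$, and the sign is $(-1)^{?}$ depending on $S$, but all that matters for perfectness is that it is invertible, which it is. I would carry out the sign computation once for the exterior-algebra model $\Lambda(x_0,\dots,x_n)$, where associativity, commutativity, and the identity $x_{i_1}\wedge\dots\wedge x_{i_k} \wedge x_{j_1}\wedge\dots\wedge x_{j_{n+1-k}} = \operatorname{sign}(\pi)\,\omega$ for the complementary increasing sequences are standard, and then transport it along the identification with $\chains(\asimplex^n)$. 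No completeness or acyclicity input is needed; the whole statement is a finite linear-algebra fact about the exterior algebra on $n+1$ generators, reindexed by faces of the augmented simplex.
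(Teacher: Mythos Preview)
Your approach is essentially the same as the paper's: exhibit the complement $S\mapsto S^c$ as a dual basis for the join pairing and conclude non-degeneracy. The paper's proof is just a terse version of what you wrote.

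There is one bookkeeping slip you should fix. By definition $\chains(\asimplex^n)=\sus{}\cadenas(\asimplex^n)$ is the \emph{right suspension} of the normalized chains, so the generator $[S]$ sits in degree $\lvert S\rvert$, not $\lvert S\rvert-1$. With your grading the empty simplex lands in degree $-1$, so the algebra is not connected; the vertices $x_i=[\{i\}]$ land in degree $0$, so under the Koszul rule they would commute rather than anticommute, contradicting the exterior-algebra identification you invoke; and the top class sits in degree $n$, contradicting the stated formal dimension $n+1$. All three issues disappear once you use the suspended grading: then $[\varnothing]$ is the unit in degree $0$, the $x_i$ have degree $1$ and genuinely generate an exterior algebra, and $\omega=[0,\dots,n]$ has degree $n+1$. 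After this correction your argument is complete and matches the paper's.
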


\subsection{Outline}

In \cref{s:preliminaries} we set some sign conventions for chain complexes and the conventions for simplicial objects.
The notion of connected diagonal is presented in \cref{s:2bdiagonals}.
The construction of the connected diagonal for augmented semi-simplicial objects of Theorem \ref{thm:main} has three main ingredients.

First, an algebra structure on the chain complex of the standard augmented simplex inducing a duality theorem on normalized chains, that is used in \cref{s:3complexes} to build a convenient chain subcomplex of the tensor product $\chains(X)^{\ot r}$.
As an application of this first ingredient, we obtain a criterion to build a connected diagonal on the normalized chains of a simplicial object.

Second, in Section \ref{s:resolutions} we give a recipy to construct an explicit $\Cyc_r$-equivariant chain map $\Omega_*(r)^{\nf} \to W_*(r)$ between two resolutions of the cyclic group $\Cyc_r$. This chain map is enough to fulfil the hypotheses of the criterion.

Third, the latter recipy requires the construction of another chain map $f$, which is pursued using the barycentric and the pair subdivision of the simplex in \cref{section:atlast}. This completes the proof of Theorem \ref{thm:main}

In \cref{s:suspension} we show that these operations behave well with respect to suspensions.
In \cref{s:9Kanspectra} we briefly explain how our method yields natural connected diagonals in the chain complex of a Kan spectrum.
Finally, in \cref{s:formulas} we give an algorithmic presentation of the formulas.
These formulas are meant to be understandable after reading Sections \ref{s:preliminaries} and \ref{s:2bdiagonals}.

\subsection*{Acknowledgments}

We thank Martin Palmer and Geoffroy Horel for insightful comments and questions.

A.M. is also grateful for the hospitality of both the Max Planck Institute for Mathematics and the Universit\'e Sorbonne Paris Nord.

\section{Preliminaries}\label{s:preliminaries}

Let $r$ be a positive integer, and let $R$ be a commutative ring such that $\bF_r$ is a $R$-algebra.
Let $\Mod{R}$ be the category of $R$-modules and let $\Ch{R}$ be the category of chain complexes over $R$.

\subsection{Chain complexes and sign conventions}

Most of the chain complexes in this work will be finitely generated free complexes with a prescribed basis. The linear dual of a complex $\ucadenas$ is denoted $\ucocadenas$ and the linear dual of a chain map $f$ will be denoted $f^\dd$. Cochain complexes will be considered as negatively graded chain complexes. This allows to define chain maps and tensor products between chain complexes and cochain complexes.

We will denote the dual basis with the same letter as the original basis unless we want to emphasize the duality, in which case we will denote the dual of $u$ by $u^\dd$. The differential in the dual complex $(\ucocadenas,\partial^\dd)$ is
\begin{align*}
	(\partial^\dd f)(\tau) &= (-1)^{|f|+1} f(\partial \tau). 
\end{align*}

\begin{convention}
	If $A$ and $B$ are chain complexes, their tensor product $A \ot B$ has differential $\partial(a \ot b) = \partial a \ot b + (-1)^{|a|}a \ot \partial b$.
\end{convention}
There is an isomorphism $A^\dd \ot B^\dd \to (B \ot A)^{\dd}$ that sends $f \ot g$ to $h$ where $h(b \ot a) = g(b)\cdot f(a)$. There are two suspension functors that we denote by $\sus{}$ (right suspension) and $\susp{}$ (left suspension), defined on a chain complex $(\ucadenas,\partial)$ as
\begin{align*}
	\sus{} \ucadenas[i] &= \ucadenas[i-1] & \susp{} \ucadenas[i] &= \ucadenas[i-1] \\
	\sus{} \partial_i &= \partial_{i-1} & \susp{} \partial_i &= -\partial_{i-1},
\end{align*}
and on a chain map $f$ as $(\sus{}f)(a) = f(a)$ and $(\susp{} f)(a) = f(a)$. If $R[k]$ denotes the chain complex that consists of a single copy of $R$ in degree $k$, then there are isomorphisms
\begin{align*}
	\sus{k} \ucadenas &\cong \ucadenas \ot R[k] & \susp{k} \ucadenas \cong R[k] \ot \ucadenas.
\end{align*}
that send $a$ to $a \ot 1$ and to $1 \ot a$ respectively. As a consequence, there is an isomorphism
\begin{align*}
	(\sus{}\ucadenas )^\dd &\cong \susp{-1}\ucocadenas
\end{align*}
that sends a cochain $f$ to the cochain $\susp{}f$ defined as $(\susp{}f)(a) = f(\sus{}a)$.
Each notion of suspension yields a notion of chain map $A \to B$ of degree $k$: It may be a graded homomorphism $A \to B$ of degree $k$ that commutes with the differential (i.e., a chain map $A \to \sus{-k} B$), or a graded homomorphism $A \to B$ of degree $k$ that commutes with the differential if $k$ is even and anticommutes with the differential if $k$ is odd (i.e., a chain map $A \to \susp{-k}B$). We will mostly work with the second notion, but keep in mind that dualizing interchanges both notions. One can switch between them using the isomorphism
\[
\sus{k}\ucadenas \lra \susp{k}\ucadenas
\]
that sends $x$ to $(-1)^{k|x|}x$. It is therefore advisable to write a map $f \colon A \to B$ of degree $k$ with the first notion as $f \colon \sus{k}A \to B$ or $f \colon A \to \sus{-k}B$ and with the second notion as $f \colon \susp{k}A \to B$ or $f \colon A \to \susp{-k}B$.

\subsubsection{Tensor products of chain maps with the right suspension}

Given chain maps $f \colon A \to \sus{-|f|}C$ and $g \colon B \to \sus{-|g|}D$, the chain map
\[
f \ot g \colon A \ot B \to \sus{-|f|-|g|}C \ot D
\]
is defined as $(f \ot g)(a \ot b) = (-1)^{|f||b|} f(a) \ot g(b)$. Given chain maps
\begin{align*}
	A & \overset{f_1}{\lra} \sus{-|f_1|} C \overset{f_2}{\lra} \sus{-|f_1|-|f_2|} E \\
	B & \overset{g_1}{\lra} \sus{-|g_1|} D \overset{g_2}{\lra} \sus{-|g_1|-|g_2|} F \\
\end{align*}
then the chain maps
\begin{align*}
	(f_2\circ f_1) \ot (g_2\circ g_1) \colon &A \ot B \lra \sus{-|f_1|-|f_2|-|g_1|-|g_2|}E \ot F
	\\
	(f_2 \ot g_2)\circ (f_1 \ot g_1) \colon &A \ot B \lra \sus{-|f_1|-|f_2|-|g_1|-|g_2|}E \ot F
\end{align*}
are related by
\begin{align*}
	(f_2\circ f_1) \ot (g_2\circ g_1) &= (-1)^{|f_2||g_1|}(f_2 \ot g_2)\circ (f_1 \ot g_1)
\end{align*}

\subsubsection{Tensor products of chain maps with the left suspension}

Given chain maps $f \colon A \to \susp{-|f|}C$ and $g \colon B \to \susp{-|g|}D$, the chain map
\[
f \ot g \colon A \ot B \to \susp{-|f|-|g|}C \ot D
\]
is defined as $(f \ot g)(a \ot b) = (-1)^{|g||a|} f(a) \ot g(b)$. Given chain maps
\begin{align*}
	A & \overset{f_1}{\lra} \susp{-|f_1|} C \overset{f_2}{\lra} \susp{-|f_1|-|f_2|} E \\
	B & \overset{g_1}{\lra} \susp{-|g_1|} D \overset{g_2}{\lra} \susp{-|g_1|-|g_2|} F \\
\end{align*}
then the chain maps
\begin{align*}
	(f_2\circ f_1) \ot (g_2\circ g_1) \colon &A \ot B \lra \susp{-|f_1|-|f_2|-|g_1|-|g_2|}E \ot F
	\\
	(f_2 \ot g_2)\circ (f_1 \ot g_1) \colon &A \ot B \lra \susp{-|f_1|-|f_2|-|g_1|-|g_2|}E \ot F
\end{align*}
are related by
\begin{align*}
	(f_2\circ f_1) \ot (g_2\circ g_1) &= (-1)^{|f_1||g_2|}(f_2 \ot g_2)\circ (f_1 \ot g_1)
\end{align*}

\begin{notation}
	We will often use an upright symbol (for example, $\uchains$) to denote a variation of a previously built chain complex $\ucadenas$. For example, in \cref{s:unstable} we will introduce a chain complex $\W(r)$ while in \cref{s:connected} we will introduce a variation $\rW(r)$: the right suspension of the augmented version of $\W(r)$. In \cref{s:simplices} we introduce the normalized chain complex $\cadenas(X)$ while in \cref{s:alexander} we introduce a variation $\chains(X)$: the right suspension of $\cadenas(X)$.
\end{notation}

\subsection{Categories}\label{s:categories}

Let $\Cyc_r$ be the cyclic group $\langle\rho\mid \rho^r\rangle$.
\begin{definition}
	A \emph{linearized category with $r$-fold diagonals} is a monoidal category $\cC$ endowed with the following structure:
	\begin{itemize}
		\item a natural transformation between $r$-fold tensor products
		\[
		\rho \colon Y_1 \ot \overset{r}{\dots} \ot Y_r \lra Y_r \ot Y_1 \ot Y_2 \ot \dots \ot Y_{r-1}
		\]
		such that $\rho^r$ is the identity.
		\item A \emph{natural cyclic diagonal}, i.e., a natural transformation from the identity to the $r$-fold tensor product
		\[
		\Delta \colon Y \lra Y \ot \overset{r}{\dots} \ot Y
		\]
		such that $\rho\circ \Delta = \Delta$.
		\item a \emph{cyclic linearization functor}, i.e., a strong monoidal functor
		\[
		\abel \colon (\cC, \ot ) \lra (\Mod{R}, \ot )
		\]
		such that the following diagram commutes
		\[
		\xymatrix{
			L(Y_1 \ot \overset{r}{\dots} \ot Y_r) \ar[r]\ar[d]^{\rho} & \abel(Y_1) \ot \overset{r}{\dots} \ot 	\abel(Y_r)\ar[d]^{\rho} \\
			L(Y_{r} \ot \overset{r}{\dots} \ot Y_{r-1}) \ar[r] & \abel(Y_r) \ot \overset{r}{\dots} \ot \abel(Y_{r-1}) }
		\]
	\end{itemize}
\end{definition}
The main example of this work will be the symmetric monoidal category $\Setp$ of pointed sets with the smash product or its subcategory $\Set$ of sets with the Cartesian product. The functor $\abel$ takes a set to the free module on it, and a pointed set to the quotient of its free module by the base point.

\subsection{Augmented simplicial objects}\label{s:simplices}

For each integer $n \geq -1$, let $[n] = \{0,\dots,n\}$ denote the $n$th ordinal, with $[-1]$ denoting the empty ordinal.

\begin{definition}
	The \emph{augmented simplex category} $\asimplex$ is the category of finite ordinals and order-preserving maps between them. 
\end{definition}

An injective map $f \colon [n] \to [m]$ is called a \emph{face map}, while a surjective map is called a \emph{degeneracy}. A face map $f$ is determined by the subset $U = [m]\smallsetminus f([n])$, and is denoted $d^U$. If $U$ has a single element $i$, we will write $d^i$ instead of $d^{\{i\}}$. There are $n+1$ surjections $s^i \colon [n+1] \to [n]$, $i = 0\dots n$ characterized by $s^i(i) = s^i(i+1)$. Every map in this category factors as an injection followed by a surjection.

The category $\asimplex$ has a strict monoidal product called the \emph{join}, that sends a pair of ordinals $[n],[m]$ to the ordinal $[n+m+1]$ and a pair of functions $f \colon [n] \to [q]$, $g \colon [m] \to [p]$ to the function $f*g \colon [n+m+1] \to [q+p+1]$ defined as $f(x) = x$ if $x\leq n$ and $f(x) = q+1+f(x-n)$ if $x \geq n$. The unit is the empty ordinal $[-1]$.

\begin{definition}
	An \emph{augmented simplicial object in $\cC$} is a functor $X \colon \asimplex^\mathrm{op} \to \cC$.
	An \emph{augmented cosimplicial object in $\cC$} is a functor $X \colon \asimplex \to \cC$.
\end{definition}

If $X$ is an augmented simplicial object, $X([n])$ is denoted $X_n$ and the morphisms $X(d^U)$ and $X(s^i)$ are denoted $\d_U$ and $s_i$ respectively. If $X$ is an augmented cosimplicial object, $X([n])$ is denoted $X_n$ and the morphisms $X(d^U)$ and $X(s^i)$ are denoted $d^U$ and $s^i$ respectively.

The inclusion $\simplex \subset \asimplex$ of the non-empty ordinals induces a restriction functor $\cC^{\asimplex^{\op}} \to \cC^{\simplex^{\op}}$ to the category of simplicial objects on $\cC$ that forgets $X_{-1}$. It has a left adjoint that sets $X_{-1}$ to be the colimit of $X$ and a right adjoint that sets $X_{-1}$ to be the terminal object of $\cC$.

If $X$ is an augmented simplicial object in one of the categories of Section \ref{s:categories}, postcomposing with the linearization functor $\abel \colon \cC \to \Mod{R}$ yields a simplicial $R$-module $\abel(X)$.

Define the \emph{complex of unnormalized chains $\ucadenas(X;R)$ with coefficients in $R$} as
\begin{align*}
	\ucadenas[n](X;R) &= \abel(X)_n
	&
	\partial(x) &= \sum_{j=0}^{n} (-1)^jd_j(x)
\end{align*}

If $X$ is an augmented simplicial object, the \emph{complex of normalized chains $\cadenas(X;R)$ with coefficients in $R$} is defined as
\begin{align*}
	\cadenas[n](X;R) &= L'(X)_n
	&
	\partial(x) &= \sum_{j=0}^{n} (-1)^jd_j(x)
\end{align*}
where $L'(X)_n$ is obtained from $L(X)_n$ by quotienting the image of the degeneracies.

\begin{notation}\label{notation:chains}
	We will denote by $\chains(X)$ the right suspension of $\cadenas(X)$.
\end{notation}

\begin{notation}
	A few times we will be working with normalized chain complexes of simplicial objects that they themselves are part of a cosimplicial or simplicial object. We will denote the ``internal'' face maps that are used to build the differential as $\diff_i$ and the external face maps that yield the cosimplicial or simplicial structure as $\d_i$.
\end{notation}

The \emph{standard augmented $n$-simplex} is the augmented simplicial set $\simplex^n_+$ given by $[m] \to \mathrm{Mor}_{\asimplex}([m],[n])$. The normalized chain complex $\cadenas(\asimplex^n)$ is generated by the non-degenerate simplices, which are indexed by the inclusions $\d_U \colon [m] \to [n]$. The simplex indexed by $\d_U$ is denoted $[v_0,\dots,v_{m}]$, where $\{v_0,\dots,v_{m}\} \subset \{0,\dots,n\}$ is the ordered image of $[m]$ by $\d_U$. As usual, if $\perm$ is a permutation of $0,\dots,m$, then $[v_{\perm(0)},\dots,v_{\perm(m)}]$ will denote the generator $(-1)^{\sign{\perm}}[v_0,\dots,v_m]$.

\subsection{The Milgram resolution of a group \texorpdfstring{$G$}{G}}

Recall the that the Milgram construction applied to a group $G$, yields a contractible augmented simplicial set $\EG$ with a free $G$-action:
\begin{align*}
	(\EG)_k &= G^{k+1}\\
	\d_i(g_0\dots g_k) &= (g_0\dots\hat{g}_i\dots g_k) \\
	\s_i(g_0\dots g_k) &= (g_0\dots g_i,g_i\dots g_k)
\end{align*}


\section{Cyclic diagonals and power operations}\label{s:2bdiagonals}

Let $\rho$ denote the standard generator of the cyclic group $\Cyc_r$ on $r$ elements, which we identify with $\{0,1,\dots,r-1\}$ via $i\mapsto \rho^i$.
The symmetric group on $\{0,1,\dots,r-1\}$ is denoted $\Sym_r$, and the regular representation includes $\Cyc_r$ into $\Sym_r$.
The letter $\tilde{r}$ will denote the quantity $\frac{r-1}{2}$.
The definitions of \cref{s:unstable} are taken from \cite{may1970general} and \cite{medina2021may_st}.
Chain complexes in this section are assumed to be non-negatively graded.

\subsection{Unstable diagonals}\label{s:unstable}

The \emph{minimal $\Cyc_r$-resolution} of the commutative ring $R$ with trivial action is the chain complex $\W(r) = \bigoplus_{q \geq 0} \W[q](r)$ defined as follows:
\begin{align*}
	\W[q](r) &= R[\Cyc_r]\langle e_q\rangle &
	\partial(e_q) &= \begin{cases}
		N(e_{q-1}) & \text{if $q$ is even} \\
		T(e_{q-1}) & \text{if $q$ is odd,}
	\end{cases}
\end{align*}
where $N = \sum_j \rho^j$ and $T = \rho - \Id$.

\begin{definition}
	An \emph{unstable $r$-cyclic diagonal} on a chain complex $\ucadenas$, or \emph{unstable $r$-diagonal} for short, is a $\Cyc_r$-equivariant chain map
	\[
	\mu \colon \W(r) \ot \ucadenas \lra \ucadenas^{\ot r}.
	\]
\end{definition}

\begin{proposition}\label{prop:unstable}
	If $r$ is prime, an unstable diagonal $\mu$ on a chain complex $\ucadenas$ induces operations indexed by the integers
	\begin{align*}
		\power^{i} \colon H_{-m}(\ucocadenas;\mathbb{F}_r)& \lra H_{-m-i}(\ucocadenas;\mathbb{F}_r)
	\end{align*}
	that send a cohomology class $[x]$ in degree $-m$ to the cohomology class $[y]$ with
	\[
	y(a) = (-1)^{\binom{m}{2} \tilde{r}}(\tilde{r}!)^m(x \ot \overset{r}{\dots} \ot x)(\mu(e_{(r-1)m-i} \ot a)).
	\]
	Furthermore, these operations satisfy the following equation
	\begin{itemize}
		\item $\power^i(x) = 0$ if $i>(r-1)m$ and $x$ has degree $-m$.
	\end{itemize}
\end{proposition}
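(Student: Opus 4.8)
The plan is to verify the three required facts in turn: that the given formula for $y$ produces a cocycle, that its cohomology class depends only on $[x]$, and that $\power^i$ vanishes above the expected range.

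First I would check well-definedness at the cochain level. Given a cocycle $x \in \ucocadenas$ of degree $-m$, define $y$ by $y(a) = (-1)^{\binom{m}{2}\tilde r}(\tilde r!)^m (x\ot\cdots\ot x)(\mu(e_{(r-1)m-i}\ot a))$; this is a linear functional on $\ucadenas$ in the correct degree because $\mu$ raises degree by the degree of $e_{(r-1)m-i}$, namely $(r-1)m-i$, while each tensor factor of $x$ lowers it by $m$, so $y$ lives in degree $-m-i$. To see $y$ is a cocycle, compute $(\partial^\dd y)(a) = (-1)^{|y|+1} y(\partial a)$ and feed this through the equation $\mu\partial = \partial\mu$: the differential on $\W(r)\ot\ucadenas$ contributes a term $\partial(e_q)\ot a = (Ne_{q-1} \text{ or } Te_{q-1})\ot a$ plus $(-1)^q e_q\ot\partial a$. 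Pairing the $e_{q-1}$-term against $x^{\ot r}$, one uses that $x^{\ot r}$ is $\Cyc_r$-invariant up to the Koszul sign $(-1)^{m^2}=(-1)^m$ coming from permuting the $r$ tensor factors each of degree $-m$; for $r$ an odd prime this sign equals $(-1)^{(r-1)m}$ since $(r-1)$ is even, hence $x^{\ot r}\circ\rho = x^{\ot r}$, so $x^{\ot r}\circ N = r\, x^{\ot r} = 0$ in $\bF_r$ and $x^{\ot r}\circ T = 0$. Thus only the $(-1)^q e_q\ot\partial x$-type term survives, and since $x$ is a cocycle that too vanishes; the normalizing constant $(-1)^{\binom m2\tilde r}(\tilde r!)^m$ is a unit in $\bF_r$ and does not affect this. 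Hence $\partial^\dd y = 0$.

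Next, independence of the choice of representative: if $x' = x + \partial^\dd w$ with $w$ of degree $-m+1$, then $(x')^{\ot r} - x^{\ot r}$ is a sum of terms each containing at least one $\partial^\dd w$ factor. Using $\mu\partial=\partial\mu$ again together with the $\Cyc_r$-equivariance of $\mu$, and the fact that $\mu$ on $\W(r)\ot\ucadenas$ is a chain map, one rewrites such a mixed term as $\partial^\dd$ of an explicit cochain built from $\mu(e_{q+1}\ot -)$ or $\mu(e_q\ot -)$ paired against a mixture of $x$'s and $w$'s, again invoking the invariance of the relevant symmetrized tensors under $N$ and $T$ in $\bF_r$-coefficients to collapse the $N$- and $T$-contributions. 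So $[y]$ depends only on $[x]$, and $\power^i$ is well defined and additive (additivity is clear from bilinearity of the formula once well-definedness is known).

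Finally, the vanishing range. If $x$ has degree $-m$ and $i > (r-1)m$, then $(r-1)m - i < 0$, so the index of the generator $e_{(r-1)m-i}$ is negative. But $\W(r)$ is concentrated in non-negative degrees, so $e_{(r-1)m-i} = 0$ in $\W(r)$, whence $\mu(e_{(r-1)m-i}\ot a) = 0$ for all $a$ and therefore $y = 0$, giving $\power^i(x) = 0$. The main obstacle is the bookkeeping in the first two steps — keeping track of the Koszul signs when permuting the $r$ copies of a degree $-m$ class and reconciling them with the stated normalizing coefficient, and organizing the mixed $x$/$w$ terms in the coboundary computation — but conceptually everything reduces to the single identity $\mu\partial = \partial\mu$ combined with $x^{\ot r}N = 0 = x^{\ot r}T$ over $\bF_r$.
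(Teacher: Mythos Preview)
Your vanishing argument for $i > (r-1)m$ is exactly the paper's: since $\W(r)$ is non-negatively graded, $e_{(r-1)m-i}=0$ when the index is negative, so $\mu$ and hence $y$ vanish.

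For the first two parts the paper does far less than you: it simply refers to May \cite{may1970general} for the construction and well-definedness of the operations, giving no argument of its own. Your sketch of the cocycle check is essentially the standard one and is fine in outline, though the Koszul sign for the cyclic permutation of $r$ factors of degree $-m$ is $(-1)^{(r-1)m^2}$, not $(-1)^{m^2}$; for odd $r$ this is $+1$ regardless, so your conclusion $x^{\ot r}\circ\rho=x^{\ot r}$ stands, and for $r=2$ one is over $\bF_2$ anyway. The independence-of-representative step is where your account is thinnest: the mixed tensors $x\ot\cdots\ot\partial^\dd w\ot\cdots\ot x$ are not individually $\Cyc_r$-invariant, so one cannot kill the $N$/$T$ contributions term by term; one must first symmetrize (sum over the cyclic orbit) and use equivariance of $\mu$ to trade the symmetrization for a shift $e_q\leadsto e_{q\pm1}$, which is precisely May's argument. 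Your final paragraph acknowledges this bookkeeping is the real work, which is honest, but as written the step ``one rewrites such a mixed term as $\partial^\dd$ of an explicit cochain'' is an assertion rather than an argument.
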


\begin{proof}
	See \cite{may1970general}. For the last claim, note that if $i>(r-1)m$, then $(r-1)m-i$ is negative, and therefore $\mu(e_{(r-1)m-i},a)$ vanishes.
\end{proof}

\begin{definition}
	An unstable $r$-diagonal $\mu$ on a chain complex $\ucadenas$ is said to be \emph{May--Steenrod} if $\mu$ factors as
	\[
	\W(r) \ot \ucadenas \xra{f \ot \id} V_*(r) \ot \ucadenas \lra \ucadenas^{\ot r}
	\]
	where $V_*(r)$ is a free resolution of the symmetric group on $r$ letters, $f$ is equivariant with respect to the regular representation of $\Cyc_r$, and the second map is $\Sym_r$-equivariant.
\end{definition}

\begin{remark}
	If $\mu$ is May--Steenrod, then $\power^i = 0$ unless $i = 2(r-1)k$ or $i = 2(r-1)k+1$.
\end{remark}

\begin{example}
	The cup-$i$ coproducts of Steenrod \cite{steenrod1947products, medina2022axiomatic} yield a natural unstable $2$-diagonal on the normalized chains of simplicial sets which is May--Steenrod.
	For general $r$, a similar May--Steenrod unstable $r$-diagonal was constructed in \cite{medina2021may_st}.
\end{example}

\subsection{Stable diagonals}\label{s:stable}

Let $\Wst(r)$ be the unbounded chain complex
\[
\dotsb \lra \Cyc_r\langle e_{-2}\rangle
\overset{T}{\lla}
\Cyc_r\langle e_{-1}\rangle
\overset{N}{\lla}
\Cyc_r\langle e_0\rangle
\overset{T}{\lla}
\Cyc_r\langle e_{1}\rangle
\overset{N}{\lla}
\Cyc_r\langle e_{2}\rangle
\overset{T}{\lla}
\dots
\]
If either $r$ is odd or $r=2$ and $R=\bF_2$, we understand it as the limit of
\[
\dotsb \lra \sus{2(1-r)}\W(r) \xra{\theta_{1-r}} \sus{1-r} \W(r) \xra{\theta_{1-r}} \W(r).
\]
In general, we understand it as the limit of
\[
\dotsb \lra \sus{4(1-r)}\W(r)\overset{\theta_{2(1-r)}}{\lra} \sus{2(1-r)} \W(r) \overset{\theta_{2(1-r)}}{\lra} \W(r).
\]
where $\theta_{k} \colon \W(r) \to \sus{-k} \W(r)$ is the \emph{suspension homomorphism} that sends $e_{q}$ to $e_{q+k}$, which only exists if $k$ is even and non-positive. This complex is isomorphic to the desuspension of the complex that computes the Tate homology of $\Cyc_r$, which is part of a homotopy fibre sequence
\[
\Wst(r) \lra \W(r)\overset{N}{\lra} \Wd(r)
\]
where the second map is the norm map that sends $e_q$ to zero unless $q=0$, in which case it is sent to $N(e_0^\dd)$.
Notice that $\Wst(r)$ is not equal to the standard model of the homotopy fibre: in the homotopy fibre we have, for negative $i$, that $\partial(e_i) = -N(e_{i-1})$ or $T(e_{i-1})$ depending on whether $i$ is even or odd.

\begin{definition}
	A \emph{stable $r$-cyclic diagonal} on a chain complex $\ucadenas$, or simply a \textit{stable $r$-diagonal}, is a $\Cyc_r$-equivariant chain map
	\[
	\mu \colon \Wst(r) \ot \ucadenas \lra \ucadenas^{\ot r}.
	\]
\end{definition}

\begin{definition}
	Let $\ESst_r$ be the inverse limit of
	\[
	\dotsb \lra \sus{2(1-r)} \ES_r\overset{\theta_{r-1}}{\lra}\sus{1-r} \ES_r\overset{\theta_{r-1}}{\lra} \ES_r
	\]
	where $\theta_{r-1}$ is the operadic suspension of \cite{berger2004combinatorial} that sends a sequence of permutations $(\sigma_0,\dots,\sigma_q)$ to zero if $(\sigma_0(0),\dots,\sigma_{r-1}(0))$ has repeated elements, and to $\pm (\sigma_{r-1},\sigma_r,\dots,\sigma_q)$ otherwise, where $\pm$ is the sign of the permutation that orders the sequence $(\sigma_0(0),\dots,\sigma_{r-1}(0))$.
\end{definition}

\begin{definition}
	A stable $r$-diagonal $\mu$ on a chain complex $\ucadenas$ is said to be $\emph{May--Steenrod}$ if it factors as
	\[
	\mu \colon \Wst(r) \ot \ucadenas \xra{f \ot \id} \ESst_r \ot \ucadenas \lra \ucadenas^{\ot r}
	\]
	with $f$ equivariant with respect to the inclusion $\Cyc_r \subset \Sym_r$ and the second map is $\Sym_r$-equivariant.
\end{definition}

Every unstable diagonal yields a stable diagonal by precomposing $\mu$ with the projection $\Wst(r) \to \W(r)$.
A stable diagonal comes from an unstable diagonal if it vanishes on $\Wst[i](r) \ot \ucadenas$ for $i<0$.

\begin{proposition}
	If $r$ is prime, a stable $r$-cyclic diagonal $\mu$ on a non-negatively graded chain complex $\ucadenas$ induces operations indexed by the integers
	\begin{align*}
		\power^{i} \colon H_{-n}(\ucocadenas;\mathbb{F}_r)& \lra H_{-n-i}(\ucocadenas;\mathbb{F}_r)
	\end{align*}
	that send a cohomology class $[x]$ in degree $-m$ to the cohomology class $[y]$ with
	\[
	y(a) = (-1)^{\binom{m}{2} \tilde{r}}(\tilde{r}!)^m(x \ot \overset{r}{\dots} \ot x)(\mu(e_{(r-1)m-i} \ot a))
	\]
\end{proposition}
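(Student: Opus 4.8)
The plan is to run the argument of \cite{may1970general} that establishes \cref{prop:unstable}, with $\Wst(r)$ in place of $\W(r)$. That argument uses only the following features of $\W(r)$: it is a chain complex of free $R[\Cyc_r]$-modules, its differential is assembled from the operators $N=\sum_j\rho^j$ and $T=\rho-\Id$, and it resolves $R$. The complex $\Wst(r)$ has the first two features and, in place of the third, is a complete resolution of $R$ over $R[\Cyc_r]$; the only relevant feature $\Wst(r)$ lacks, namely being bounded below, is immaterial, since every homological step below is performed one degree at a time. Writing $k=(r-1)m-i$ and $c=(-1)^{\binom{m}{2}\tilde{r}}(\tilde{r}!)^m$, the task is to show the assignment $[x]\mapsto[y]$, with $y(a)=c\cdot x^{\ot r}(\mu(e_k\ot a))$, is well defined; a degree count then places $y$ in degree $-(m+i)$.

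\emph{Step 1: if $x$ is a cocycle then so is $y$.} Since $\mu$ is a chain map and $x^{\ot r}$ is a cocycle, $y(\partial a)$ reduces to a scalar multiple of $x^{\ot r}\bigl(\mu(\partial e_k\ot a)\bigr)$; as $\partial e_k$ equals $N e_{k-1}$ or $T e_{k-1}$, the $\Cyc_r$-equivariance of $\mu$ rewrites this as $x^{\ot r}$ applied to $N$, respectively $T$, of an element of $\ucadenas^{\ot r}$. Because $r$ is odd the cyclic permutation of $r$ tensor factors of degree $-m$ carries trivial Koszul sign, so $x^{\ot r}$ is $\rho$-invariant; hence $x^{\ot r}\circ T=0$ and $x^{\ot r}\circ N=r\cdot x^{\ot r}=0$, the last equality because $R$ is an $\bF_r$-algebra. (When $r=2$ one works over $\bF_2$, where signs are immaterial and $2=0$.)

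\emph{Step 2: $[y]$ depends only on $[x]$.} This is the standard verification behind the well-definedness of Steenrod-type operations at the cochain level. For cohomologous cocycles $x$ and $x'=x+\partial^{\dd}w$, a telescoping identity — using that $r$ is prime to gather the mixed words in $x$ and $w$ into free $\Cyc_r$-orbits — exhibits $(x')^{\ot r}-x^{\ot r}$ as a coboundary; composing with $\mu(e_k\ot-)$ then writes $y-y'$ as a coboundary plus a defect built from $\partial e_k=N e_{k-1}$ or $T e_{k-1}$, and this defect is removed by a descending equivariant correction that uses, at each degree, the freeness of $\Wst[q](r)$, the exactness of $\Wst(r)$, and the $N$/$T$ alternation of its differential. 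These are exactly the ingredients of \cite{may1970general}, the boundedness of $\W(r)$ not among them, so the argument transfers and $[y]=[y']$.

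\emph{Main obstacle.} The delicate point is Step 2's descending recursion: one must confirm that May's construction of the equivariant correction, phrased for the bounded-below resolution $\W(r)$, never appeals to a bottom degree — which it does not, since each stage is produced from the freeness and exactness of $\Wst(r)$ in a single homological degree, so the recursion runs unboundedly in both directions. A related caveat is that $\Wst(r)$ must be taken with the untwisted $N$/$T$ differential displayed just before the statement, not the genuine homotopy-fibre model, since only then do the equivariance identities used above — in particular $x^{\ot r}\circ N=r\,x^{\ot r}$ — hold as written. Given these, the remainder is formal.
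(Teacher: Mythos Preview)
Your approach differs from the paper's. The paper does not rerun May's argument; it observes that a stable diagonal on $\ucadenas$ induces one on $\sus{}\ucadenas$ via $\mu(e_q\ot\sus{}a)=\sus{\ot r}\mu(e_{q+r-1}\ot a)$, suspends until the index $(r-1)m-i$ is non-negative, truncates $\Wst(r)$ to $\W(r)$ to obtain an unstable diagonal, and then simply invokes \cref{prop:unstable}. This is a two-line reduction rather than a reproof.

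Your direct route can be made to work, but the justification of Step~2 is flawed as written. The ``descending equivariant correction'' you describe produces, at each stage, a new defect term involving $e_{k-1}$, then $e_{k-2}$, and so on --- and you assert this recursion may ``run unboundedly in both directions''. An unbounded descent, however, yields an infinite sum of correcting cochains, which is not a cochain. What actually makes the argument terminate is the one hypothesis you never invoke: since $\ucadenas$ is non-negatively graded, so is $\ucadenas^{\ot r}$, and $\mu(e_{k-j}\ot a)\in(\ucadenas^{\ot r})_{k-j+|a|}$ vanishes once $j>k+|a|$; hence for each fixed $a$ the descent stops after finitely many steps. With this observation your approach becomes a legitimate alternative, but without it the proof is incomplete --- and the paper's suspension trick sidesteps the issue entirely by landing back in the bounded-below setting where \cref{prop:unstable} already applies.
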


\begin{proof}
	If $\ucadenas$ has a stable diagonal, then $\sus{} \ucadenas$ has a stable diagonal as well, defined as $\mu(e_q \ot \sus{} a) = \sus{\ot r}\mu(e_{q+r-1} \ot a)$.
	Therefore, after enough suspensions, we may assume that the stable diagonal vanishes in negative degrees and that $q$ is positive.
	Then a truncation reduces this to the situation of an unstable diagonal.
\end{proof}

\begin{example}
	As we will see in \cref{s:suspension}, the unstable diagonal for normalized chains in simplicial sets of the previous example, defines a stable diagonal on the normalized chains of simplicial spectra (these are defined in \cite{Gill2020}) which is May--Steenrod.
\end{example}

\begin{figure}
	\input{tables/unstable}
	\caption{A depiction of $\Wst(3) \ot \ucadenas$. A stable diagonal comes from an unstable diagonal if it vanishes on the part coloured in green.}
\end{figure}

\subsection{Connected diagonals}\label{s:connected}

Let $\mu$ be a stable diagonal on $\ucadenas$.
A way of guaranteeing the desirable property that $\power^i(x)$ vanishes for $i<0$ is to ask for $\mu$ to vanish on $\Wst[i] \ot \ucadenas[n]$ for $i>(r-1)n$.
We will now introduce a convenient chain complex $\rWd(r) \hotimes \rucadenas$ such that giving a chain map $\mu \colon \rWd(r)  \hotimes \rucadenas \to \ucadenas^{\ot r}$ is equivalent to give a stable diagonal that vanishes on $\Wst[i](r) \ot \ucadenas[n]$ for $i>(r-1)n$ (see Figures \ref{figure:2} and \ref{figure:3}).

\begin{figure}
	\input{tables/hat}
	\caption{If a stable diagonal for $r=3$ vanishes on the part coloured in green, then $\power^i$ vanishes for $i<0$.}
	\label{figure:2}

 \vspace{2cm}

	\input{tables/hat_alone}
	\caption{The complex $\rWd(r) \hotimes \rucadenas$ for $r=3$ and $R=\bF_3$.
		The map $\varphi$ goes from Figure \ref{figure:2} to here, is an epimorphism and its kernel is the smallest chain subcomplex that contains the green part. The power operation $\power^0$ is computed at the corners $\bF_3\hotimes C^3_n$.}
	\label{figure:3}
\end{figure}

Let $\rucadenas$ be the chain complex $\bigoplus_m \sus{(r-1)m}\ucadenas[m]$ with the same differential as $\ucadenas$, which now has degree $-r$.
An element $x\in \ucadenas$ viewed in $\rucadenas$ will be denoted $\vec{x}$.
We write $\rW(r)$ for the right suspension of the augmented chain complex $\W(r) \to R$, i.e., the chain complex
\[
R
\overset{N}{\lla}
R[\Cyc_r]\langle e_{1}\rangle
\overset{T}{\lla}
R[\Cyc_r]\langle e_{2}\rangle
\overset{N}{\lla}
R[\Cyc_r]\langle e_{3}\rangle
\overset{T}{\lla}
\dotsb
\]
(note the reindexing, so that $e_i$ has degree $i$) and $\rWd(r)$ for its dual:
\[
\dotsb
\overset{T}{\lla}
R[\Cyc_r]\langle e_{-3}^\dd\rangle
\overset{-N}{\lla}
R[\Cyc_r]\langle e_{-2}^\dd\rangle
\overset{T}{\lla}
R[\Cyc_r]\langle e_{-1}^\dd\rangle
\overset{-N}{\lla}
R.
\]
Observe that $\rho$ acts ``backwards'' in the dual complex, in the sense that $\rho (e_{-i}^\dd) = (\rho^{-1} e_i)^{\dd}$.
The generator of the copy of $R$ in the right will be denoted as $1$ or as $e_0^\dd$.

Again, there is a suspension map $\theta_k \colon \rWd(r) \to \susp{-k}\rWd(r)$ defined for all even $k$ (or all $k$ in case $r=2$ and $R=\bF_2$) as
\begin{align*}
	\theta_k(e_{-i}^\dd) &= e_{k-i}^\dd
	&
	\theta_k(1) &= \begin{cases} N(e_k^\dd) &\text{if $k<0$} \\ 1 &\text{if $k=0$} \\ 0 &\text{if $k>0$.} \end{cases}
\end{align*}
If either $r$ is odd or $r=2$ and $R=\mathbb{F}_2$, define the chain complex $\rWd(r) \hotimes \rucadenas$ as the graded $R[\Cyc_r]$-module $\rWd(r) \ot \rucadenas$ with the differential
\[
\partial(e^\dd_{-q} \ot \vec{x}) = \partial (e^\dd_{-q}) \ot \vec{x} + (-1)^q \theta_{r-1}(e^\dd_{-q}) \ot \partial (\vec{x}).
\]
If $r$ is even and $R$ is arbitrary, define the cochain complex $\rWhatd(r)$ as
\[
\dotsb
\overset{-T}{\lla} 
R[\Cyc_r]\langle e_{-2}^\dd\rangle 
\overset{N}{\lla}
R[\Cyc_r]\langle e_{-1}^\dd\rangle
\overset{-T}{\lla} 
R[\Cyc_r]\langle e_{0}^\dd\rangle / N 
\]
This chain complex admits chain maps
\begin{align*}
	\hat{\theta}_{r-1} \colon \rWd(r)& \lra \susp{1-r}\rWhat(r)
	&
	\hat{\theta}_{r-1} \colon \rWhat(r)& \lra \susp{1-r}\rW(r).
\end{align*}
Define $\ucadenas[\mathrm{even}] = \bigoplus_{m}\ucadenas[2m]$ and $\ucadenas[\mathrm{odd}] = \bigoplus_{m}\ucadenas[2m+1]$ and write $\rWd(r) \hotimes \rucadenas$ for the graded $R[\Cyc_r]$-module $\left(\rWd(r) \ot \rucadenas[\mathrm{odd}]\right)\oplus \left(\rWhatd(r) \ot \rucadenas[\mathrm{even}]\right)$ with differential
\[
\partial(e^\vee_{-q} \ot \vec{x}) = \partial (e^\vee_{-q}) \ot \vec{x} + (-1)^q \theta_{1-r}(e^\vee_{-q}) \ot \partial (\vec{x}).
\]
Observe that if $r=2$ and $R=\bF_2$, the complexes $\rWd(r)$ and $\rWhatd(r)$ coincide, and so do both definitions of $\rWd(r) \hotimes \rucadenas$.

\begin{definition}
	A \emph{connected $r$-cyclic diagonal} on a non-negatively graded chain complex $\ucadenas$, or a \textit{connected $r$-diagonal} for short, is a $\Cyc_r$-equivariant chain map
	\[
	\mu \colon \rWd(r) \hotimes \rucadenas \lra \ucadenas^{\ot r}.
	\]
\end{definition}

If either $k$ is even or $r=2$ and $R=\bF_2$, let $\varphi_k \colon \Wst(r) \to \susp{-k}\rWd(r)$ be the chain map that sends $e_q$ to $(-1)^{\binom{k-q}{2}}e_{k-q}^{\vee}$.
One can obtain this chain map as follows: We already saw $\Wst(r)$ as the homotopy fibre of the norm map $N \colon \W(r) \to \Wd(r)$, but it is also the homotopy fibre of the map $\rWd(r) \to \sus{}\rW(r)$ that is the identity on $R$ and zero elsewhere.
The map $\varphi_k$ is the composition
\[
\Wst(r)\overset{\theta_k}{\lra} \sus{-k} \Wst(r) \overset{\cong}{\lra} \sus{-k}\hofib(\rWd(r) \to \rW(r)) \lra \susp{-k}\rWd(r).
\]
This composition is an epimorphism with kernel the smallest chain subcomplex that contains $\Wst[>k]$.
In other words, it is isomorphic to the truncation $\tau_k \Wst(r)$.
If $k$ is odd, let $\varphi_k \colon \Wst(r) \to \sus{k}\rWhatd(r)$ be the chain map that sends $e_q$ to $e_{k-q}^{\vee}$.

Define the chain map $\varphi \colon \Wst(r) \to \rWd(r) \hotimes \rucadenas$ as
\[
\varphi(e_q \ot x) = \varphi_{-n(r-1)}(e_{q}) \ot \vec{x}
\]
if $x$ has degree $n$.
This is an epimorphism with kernel the smallest chain subcomplex that contains $\bigoplus_n\bigoplus_{i>(r-1)n} \Wst[i] \ot \ucadenas[n]$.
As a consequence,

\begin{proposition}
	If $r$ is prime, a connected $r$-cyclic diagonal induces operations indexed by the integers
	\begin{align*}
		\power^{i} \colon H_{-m}(\ucocadenas;\mathbb{F}_r)& \lra H_{-m-i}(\ucocadenas;\mathbb{F}_r)
	\end{align*}
	that send a cohomology class $[x]$ in degree $m$ to the cohomology class $[y]$ with
	\[
	y(a) = (-1)^{\binom{ir}{2} + \binom{m}{2}\tilde{r}}(\tilde{r}!)^m(x \ot \overset{r}{\dots} \ot x)(\mu(e_{ri}^\vee \ot a))
	\]
	These operations satisfy the following equation
	\begin{itemize}
		\item $\power^i(x) = 0$ for $i<0$.
	\end{itemize}
\end{proposition}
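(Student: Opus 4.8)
The plan is to deduce this statement from the analogous Proposition for stable $r$-diagonals, by transporting it along the comparison chain map $\varphi$ constructed above. First I would note that, since $\varphi\colon\Wst(r)\ot\ucadenas\to\rWd(r)\hotimes\rucadenas$ is a $\Cyc_r$-equivariant chain map, precomposition turns a connected $r$-diagonal $\mu$ into a $\Cyc_r$-equivariant chain map $\bar\mu\defeq\mu\circ\varphi\colon\Wst(r)\ot\ucadenas\to\ucadenas^{\ot r}$, i.e.\ into a stable $r$-diagonal on $\ucadenas$. Applying that Proposition to $\bar\mu$ --- which is where the primality of $r$ is used --- immediately produces operations $\power^i\colon H_{-m}(\ucocadenas;\bF_r)\to H_{-m-i}(\ucocadenas;\bF_r)$ sending $[x]$ to the class of the cocycle $y$ with $y(a)=(-1)^{\binom{m}{2}\tilde{r}}(\tilde{r}!)^m(x\ot\overset{r}{\dots}\ot x)\bigl(\bar\mu(e_{(r-1)m-i}\ot a)\bigr)$, for $a$ of degree $m+i$.

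It then remains to rewrite this formula in terms of $\mu$ itself and to establish the vanishing for negative $i$. For the formula I would expand $\bar\mu(e_{(r-1)m-i}\ot a)=\mu\bigl(\varphi(e_{(r-1)m-i}\ot a)\bigr)$; since $a$ has degree $m+i$, the definition of $\varphi$ gives $\varphi(e_{(r-1)m-i}\ot a)=\varphi_{-(m+i)(r-1)}(e_{(r-1)m-i})\ot\vec{a}$, and a short computation with the indices shows that $\varphi_{-(m+i)(r-1)}(e_{(r-1)m-i})$ is a unit multiple of the single generator $e_{ri}^\vee$. Substituting this and absorbing the resulting sign --- which collects to $(-1)^{\binom{ir}{2}}$ --- into the prefactor reproduces the displayed formula for $\power^i$.

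For the vanishing I would invoke that $\ker\varphi$ is the smallest subcomplex of $\Wst(r)\ot\ucadenas$ containing $\bigoplus_{n}\bigoplus_{j>(r-1)n}\Wst[j](r)\ot\ucadenas[n]$; in particular $\bar\mu$ annihilates $\Wst[j](r)\ot\ucadenas[n]$ whenever $j>(r-1)n$. Since $\power^i[x]$ with $[x]$ of degree $-m$ is computed by evaluating $\bar\mu$ on $\Wst[(r-1)m-i](r)\ot\ucadenas[m+i]$, and the inequality $(r-1)m-i>(r-1)(m+i)$ is equivalent to $ri<0$, the representing cocycle $y$ vanishes identically for $i<0$ (and for $m+i<0$ it vanishes trivially for degree reasons). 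Hence $\power^i=0$ in that range.

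The only genuinely delicate step is the sign bookkeeping in the middle paragraph: one must combine the sign $(-1)^{\binom{k-q}{2}}$ built into $\varphi_k$ with the Koszul signs coming from the right suspensions packaged into $\Wst(r)$ and $\rWd(r)$ and from the degree shifts defining $\rucadenas$, and check that the total is exactly $(-1)^{\binom{ir}{2}}$; with the conventions fixed in \cref{s:preliminaries} this is routine but error-prone. For $r=2$ over a general ring one runs the same argument with $\rWhatd(r)$ replacing $\rWd(r)$ in even internal degrees, without any essential change, and everything else is purely formal.
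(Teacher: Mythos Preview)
Your approach is exactly the one the paper intends: the proposition is stated immediately after the description of $\varphi$ and its kernel with the words ``As a consequence,'' so the implicit proof is precisely to precompose the connected diagonal with $\varphi$, invoke the stable proposition, translate the formula through $\varphi_k$, and read off the vanishing from the kernel description. Your write-up makes these steps explicit and correctly isolates the sign bookkeeping as the only point requiring care.
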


\begin{example}
	As far as we know, the concept of connected diagonal has not appeared before in the literature, nonetheless, this viewpoint is implicit in the construction of the unstable diagonal for $r=2$ on the cochain operations on the normalized cochains of a simplicial set of \cite{medina2021fast_sq}.
	It is also implicit in the construction of the stable diagonal for $r=2$ of the cochain operations on the cochains of augmented semi-simplicial objects in the Burnside $2$-category \cite{cantero-moran2020khovanov}.
\end{example}

\begin{question}
	Give a good definition of ``May--Steenrod connected $r$-diagonal''.
	Give a homotopy equivalence between the Tate complex of $\ES_r$ and the unbounded complex $\ESst_r$.
\end{question}

\section{Alexander duality on the simplex and tensor products}\label{s:3complexes}

In this section we present a duality on the normalized chains of the augmented simplex, and use it to build manageable models of the $r$-fold tensor product of the normalized chain complex of an augmented simplicial object in $\cC$. In Proposition \ref{prop:omegarm} and Lemma \ref{lemma:omegar} we reduce the construction of a connected diagonal to the construction of a map between two resolutions of the cyclic group $\Cyc_r$ satisfying an additional hypothesis. This latter map is constructed in \cref{s:resolutions}.

\subsection{Alexander duality on the augmented standard simplex}\label{s:alexander}

Given a simplicial set $X$, let $\chains(X) = \sus{}\cadenas(X)$ be the right suspension of the normalized chains of $X$.

\begin{definition}\label{d:poincare_duality_algebra}
	Let $A$ be a connected commutative algebra that is finite dimensional for each degree.
	We say $A$ is a \textit{Poincar\'e duality algebra} of \textit{formal dimension} $d$ if:
	\begin{enumerate}
		\item\label{i:pd1} $A_i = 0$ for $i > d$,
		\item\label{i:pd2} $\dim A_d = 1$,
		\item\label{i:pd3} $A_i \ot A_{d-i} \to A_d$ is non-degenerate.
	\end{enumerate}
\end{definition}

\begin{definition}\label{d:join_product}
	For any $[n] \in \ob\asimplex$ the \textit{join product} $\ast \colon \chains(\asimplex^{n})^{\ot 2} \to \chains(\asimplex^{n})$ is the linear map defined by sending a basis element $[v_0, \dots, v_{p-1}] \ot [v_{p},\dots,v_{m-1}]$ to $(-1)^{\sign{\perm}}[v_{\perm(0)}, \dots, v_{\perm(m-1)}]$	if $v_i \neq v_j$ for all $i \neq j$, where $\perm$ is the permutation ordering the vertices, and to $0$ otherwise.
\end{definition}

\begin{notation}
	If $\tau = [v_0,\dots,v_{k-1}]$ and $\tau'=[u_0,\dots,u_{m}-1]$ are two non-degenerate simplices of $\asimplex^n$, let $\lambda(\tau,\tau')$ be the parity of the permutation that orders the sequence $[v_0,\dots,v_{k-1},u_0,\dots,u_{m-1}]$.
\end{notation}

\begin{theorem}
	The join product is a chain map naturally defining on each $\chains(\asimplex^{n})$ the structure of a Poincar\'e duality algebra with unit the empty simplex $[-1] \to [n]$ and formal dimension $n+1$.
\end{theorem}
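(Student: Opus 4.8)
The plan is to verify the three axioms of a Poincaré duality algebra directly against the combinatorial description of $\chains(\asimplex^{n})$, after first checking that the join product is a well-defined chain map with the empty simplex as unit. The chain complex $\chains(\asimplex^n)=\sus{}\cadenas(\asimplex^n)$ has a basis indexed by non-empty subsets of $\{0,\dots,n\}$, where a $k$-element subset $\{v_0<\dots<v_{k-1}\}$ sits in degree $k$ (because of the right suspension shift), and the empty subset $[-1]\to[n]$ is the degree-$0$ generator; the differential is the usual simplicial boundary. I would first record that $\ast$ is graded of degree $0$ with respect to this grading, that it is associative and (graded-)commutative on basis elements by the sign-of-the-ordering-permutation bookkeeping encoded in $\lambda(\tau,\tau')$, and that $[-1]$ acts as a two-sided unit. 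The Leibniz identity $\partial(\tau\ast\tau')=\partial\tau\ast\tau'+(-1)^{|\tau|}\tau\ast\partial\tau'$ is checked termwise: when $\tau,\tau'$ have disjoint vertex sets, deleting a vertex from the shuffled simplex corresponds on the right-hand side to deleting it from $\tau$ or from $\tau'$, and the signs match because the suspension convention was chosen precisely to absorb the discrepancy; when the vertex sets overlap both sides vanish. This is the routine but sign-sensitive part.

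With $\ast$ established as a chain map, I turn to the algebra structure. The ring $\chains(\asimplex^n)$ with the join product is visibly isomorphic, as a graded commutative ring, to the exterior-type algebra on the $n+1$ singleton classes: the product of the singletons $[v]$ over $v$ ranging in a subset $S$ equals $\pm[S]$, the generator indexed by $S$, and products of generators with a repeated vertex vanish. Hence $\chains(\asimplex^n)\cong\Lambda_\Z(x_0,\dots,x_n)$ with $|x_i|=1$ (up to signs, and over $\Z$ it is the Stanley–Reisner / simplicial cochain algebra of the full simplex, i.e. $\Z[x_i]/(x_i^2, x_ix_j+x_jx_i)$ type). From this identification the axioms are immediate: $A_i=0$ for $i>n+1$ since there are no subsets of size $>n+1$; $\dim A_{n+1}=1$, spanned by the top simplex $[0,1,\dots,n]$; and $A$ is connected with $A_0=\Z$ spanned by $[-1]$. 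For the non-degeneracy pairing $A_i\ot A_{n+1-i}\to A_{n+1}$, note that $[S]\ast[S']=\pm[0,\dots,n]$ exactly when $S'=\{0,\dots,n\}\setminus S$ and is $0$ otherwise; therefore in the chosen bases the pairing is given by a signed permutation matrix, which is unimodular, so the pairing is perfect over $\Z$ (in particular non-degenerate). This also identifies the Poincaré dual of $[S]$ with $\pm[S^{c}]$, which is the "Alexander duality" alluded to in the section title.

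The main obstacle I expect is bookkeeping the signs consistently across three conventions at once: the right-suspension shift that moves a $k$-simplex into degree $k$ and flips boundary signs, the Koszul sign $(-1)^{|\tau|}$ in the Leibniz rule, and the permutation sign $(-1)^{\lambda(\tau,\tau')}$ defining $\ast$. The cleanest way to manage this is to fix the identification $\chains(\asimplex^n)\cong\Lambda_\Z(x_0,\dots,x_n)$ once and for all — sending the singleton $[i]$ to $x_i$ and $[-1]$ to $1$ — and to check that under this identification the simplicial boundary becomes the \emph{zero} differential shifted appropriately, or more precisely becomes the Koszul differential for which $\ast$ is manifestly a chain map; then associativity, commutativity, unitality, and the three axioms are all standard facts about the exterior algebra and require no simplex-specific computation. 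Naturality in $[n]\in\ob\asimplex$ follows because a monotone map $[n]\to[m]$ induces the evident ring map on exterior algebras (sending $x_i$ to $x_{f(i)}$ or to $0$ if $f$ is not injective on the relevant vertices), which visibly commutes with $\ast$ and with $\partial$. I would present the exterior-algebra identification as the key lemma and deduce everything else from it.
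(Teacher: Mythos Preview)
Your approach is correct and essentially parallel to the paper's, though you package it differently. The paper verifies the axioms directly: it observes the rank conditions on $A_0$, $A_{n+1}$, $A_{>n+1}$ by inspection, cites an external reference for the Leibniz rule, and checks non-degeneracy by producing the explicit dual $x^c$ of a basis element $x$. You instead route everything through the identification $\chains(\asimplex^n)\cong\Lambda_\Z(x_0,\dots,x_n)$ with its Koszul differential, from which all the algebra axioms and the Leibniz rule become standard exterior-algebra facts. Both arguments amount to the same underlying computation; your repackaging makes the chain-map property more transparent (it is just that interior multiplication is a derivation) at the cost of having to set up and justify the DGA isomorphism.

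One point to clean up: the differential is not ``the zero differential shifted appropriately''; drop that phrase. Under your identification the simplicial boundary becomes exactly the Koszul-type derivation $\partial=\sum_i\iota_{x_i}$ (contraction against each generator), which is a degree $-1$ derivation of the wedge product, and that is what makes $\ast$ a chain map. You should also be explicit that when $\tau$ and $\tau'$ share exactly one vertex the two surviving terms on the right of the Leibniz rule cancel; this is the only nontrivial case and is where a sign error would bite.
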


\begin{proof}
	The complex $\chains(\asimplex^{n})$ is connected and satisfies \cref{i:pd2,i:pd3} in \cref{d:poincare_duality_algebra} since $\chains(\asimplex^{n})_0 \cong \Z\{[-1] \to [n]\}$, $\chains(\asimplex^{n})_{n+1} \cong \Z\{[n] \to [n]\}$, and $\chains(\asimplex^{n})_{n+k} \cong 0$ for $k>1$.

	That the join product is a natural chain map can be easily verified and a complete proof is presented in \cite[p.19]{medina2020prop1}.

	Thinking about the join product in terms of the union of sets with a permutation sign leads to a direct verification of its commutativity (in the graded sense) and unitality with respect to the empty simplex.

	To verify \cref{i:pd3} consider a basis element $x = [v_1,\dots,v_i]$.
	Let $x^c$ be the ordered complement of $\set{v_1,\dots,v_i}$ in $\{0,\dots,n\}$ and notice that $x \ast x^c = \pm [0,\dots,n]$ as required.
\end{proof}

This pairing thus induces an isomorphism
\begin{equation}\label{eq:iso1}
	\Lambda \colon \chains(\asimplex^n) \to \susp{n+1}\cochains(\asimplex^n)
\end{equation}
that sends a chain $\tau$ to $(-1)^{\lambda(\tau,\tau^c)}(\tau^c)^\vee$. We will refer to this isomorphism as \emph{Alexander duality}.

\begin{remark}\label{remark:alex}
	If $\tau = [v_0,\dots,v_k]$ and $\tau^c = [u_0,\dots,u_m]$ are the ordered representatives, then
	\begin{align*}
		\lambda(\tau,\tau^c)&\equiv \sum_{i=0}^k (v_i-i) \equiv \sum_{i=0}^k v_i - \binom{k+1}{2} \\
		\lambda(\tau,\tau^c)&\equiv \sum_{j=0}^m (n-u_j-m+j) \equiv \sum_{j=0}^m u_j +(n-m)(m+1)+\binom{m+1}{2}.
	\end{align*}
\end{remark}

\begin{notation}
	We will denote normalized chains between brackets and normalized cochains between parentheses.
\end{notation}

\subsection{Simplicial functor tensor products}

Recall that if $A$ is an augmented cosimplicial chain complex and $B$ is an augmented simplicial chain complex, the functor tensor product $A \ot_{\asimplex} B$ is the quotient of the tensor product $A \ot B$ by the relation generated by
\begin{align*}
	\d^i a \ot b &\sim (-1)^{|\d^i||a|}a \ot \d_i b
	&
	\s^ia \ot b &\sim (-1)^{|\s^i||a|}a \ot \s_ib.
\end{align*}
The chain complexes $\chains(\asimplex^\bullet)$ form an augmented cosimplicial chain complex with
\begin{align*}
	\d^i \colon \chains(\asimplex^n)& \lra \chains(\asimplex^{n+1})
	&
	\s^i \colon \chains(\asimplex^{n})& \lra \chains(\asimplex^{n-1})
\end{align*}
given on an ordered representative $[v_0,\dots,v_{k-1}]\in \chains(\asimplex^n)$ by
\begin{align*}
\d^i([v_0,\dots,v_{k-1}]) &= [v_0,\dots,v_{j-1},v_j+1,\dots,v_{k-1}] \quad \text{if $v_{j-1}<i\leq v_j$}
\\
\s^i([v_0,\dots,v_{k-1}]) &= [v_0,\dots,v_{j-1},v_j-1,\dots,v_{k-1}-1] \quad \text{if $v_{j-1} \leq i < v_j$}.
\end{align*}
If $X$ is an augmented simplicial object in $\cC$, the chain complex $\chains(X)$ can alternatively be described as the functor tensor product over $\asimplex$ of the augmented cosimplicial chain complex $\chains(\asimplex^{\bullet})$ and the augmented simplicial $R$-module $\abel(X)_\bullet$:
\begin{equation}\label{eq:1}
	\chains(\asimplex^\bullet) \ot_{\asimplex} \abel(X)_\bullet \cong \chains(X).
\end{equation}
This sends a simplex $[k] \to [n]$ and a simplex $[n] \to X$ to the composition $[k] \to X$.

Via Alexander duality, we may endow $\cochains(\asimplex^n)$ with the following augmented \emph{cosimplicial} structure
\begin{align*}
	\d^i \colon \cochains(\asimplex^n) & \lra \susp{}\cochains(\asimplex^{n+1})
	&
	\s^i \colon \cochains(\asimplex^{n+1}) & \lra \susp{}\cochains(\asimplex^n)
\end{align*}
given on a generator $U=(u_0,\dots,u_{m-1})$ by
\begin{align*}
	\d^i(U) &=
	(-1)^{i+j+m+n+1}(u_0,\dots,u_{j-1},i,u_{j}+1,\dots,u_{m-1}+1)
	\quad \text{if $u_{j-1}<i \leq u_{j}$}
	\\
	\s^i(U) &= \begin{cases}
	(-1)^{i+j+m+n+1}(u_0,\dots,u_{j-1},u_{j+1}-1\dots,u_{m-1}-1) & \text{if $u_{j} = i$,}
	\\
	0 & \text{otherwise.}
	\end{cases}
\end{align*}
 Let $\rA(X)$ be the augmented simplicial graded $R$-module with $\rA(X)_n = \susp{n+1}\abel(X)_{n}$, and face maps and degeneracies are induced by those of $\abel(X)$ and have degree $-1$ and $+1$ respectively.

\begin{lemma}\label{lemma:2} The map
	\[
	\cochains(\asimplex^\bullet) \ot_{\asimplex} \rA(X)_\bullet \lra \chains(\asimplex^\bullet) \ot_{\asimplex} \abel(X)_\bullet
	\]
	that sends $U \ot \tau$ with to $\Lambda^{-1}(U) \ot \tau$ is an isomorphism of chain complexes.
\end{lemma}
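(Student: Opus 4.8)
The plan is to verify directly that the assignment $U \ot \tau \mapsto \Lambda^{-1}(U) \ot \tau$ respects the relations defining the two functor tensor products and commutes with the differentials, with the heavy lifting reduced to compatibility statements about $\Lambda$ that follow from \cref{eq:iso1} and the explicit formulas just introduced.

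\begin{proof}
	Since $\Lambda \colon \chains(\asimplex^n) \to \susp{n+1}\cochains(\asimplex^n)$ of \cref{eq:iso1} is a degree-preserving isomorphism once we record that $\rA(X)_n = \susp{n+1}\abel(X)_n$, the map $U \ot \tau \mapsto \Lambda^{-1}(U) \ot \tau$ is a bijection on the level of underlying graded $R$-modules $\cochains(\asimplex^\bullet) \ot \rA(X)_\bullet \to \chains(\asimplex^\bullet) \ot \abel(X)_\bullet$ before passing to the coequalizers. So it suffices to check three things: that this bijection descends to the functor tensor products, that it is a chain map, and that its inverse is induced by $\Lambda$ (which is automatic once the first two hold). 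For the descent, one must show that the cosimplicial structure on $\cochains(\asimplex^\bullet)$ is transported from that of $\chains(\asimplex^\bullet)$ under $\Lambda$, i.e. that the square
	\[
	\xymatrix{
		\chains(\asimplex^n) \ar[r]^-{\d^i}\ar[d]_-{\Lambda} & \susp{}\chains(\asimplex^{n+1}) \ar[d]^-{\susp{}\Lambda} \\
		\susp{n+1}\cochains(\asimplex^n) \ar[r]^-{\susp{n+1}\d^i} & \susp{n+2}\cochains(\asimplex^{n+1})
	}
	\]
	commutes, and similarly for $\s^i$; this is exactly why the cosimplicial formulas for $\d^i$ and $\s^i$ on $\cochains(\asimplex^n)$ were defined the way they were, so the content is to match the sign $(-1)^{i+j+m+n+1}$ with the difference of Alexander-duality signs $\lambda(\tau,\tau^c)$ computed via \cref{remark:alex}. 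Granting that, $\d^i a \ot b \sim (-1)^{|\d^i||a|}a \ot \d_i b$ on one side maps to the corresponding relation on the other side, and likewise for the degeneracies, so the bijection passes to the quotient.

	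It remains to see it is a chain map. The differential on $\chains(\asimplex^\bullet) \ot_{\asimplex} \abel(X)_\bullet$ is the internal one coming from $\chains(\asimplex^n)$ (the simplicial differential of $\abel(X)$ having been absorbed into the relations), and likewise the differential on $\cochains(\asimplex^\bullet) \ot_{\asimplex} \rA(X)_\bullet$ comes from $\cochains(\asimplex^n)$. Thus we only need that $\Lambda^{-1} \colon \susp{n+1}\cochains(\asimplex^n) \to \chains(\asimplex^n)$ is a chain map for each fixed $n$, which is precisely the statement of \cref{eq:iso1} that $\Lambda$ is an isomorphism of chain complexes (the join product being a chain map was established in the preceding theorem, and a Poincar\'e duality pairing of a chain complex induces a chain isomorphism onto the shifted dual). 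Combining the two points, $U \ot \tau \mapsto \Lambda^{-1}(U) \ot \tau$ is a well-defined isomorphism of chain complexes with inverse induced by $\tau' \ot \tau \mapsto \Lambda(\tau') \ot \tau$.

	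The main obstacle is bookkeeping rather than conceptual: one has to pin down, using \cref{remark:alex}, that applying $\d^i$ to $[v_0,\dots,v_{k-1}]$ and then dualizing produces the same signed cochain as first dualizing and then applying the cosimplicial $\d^i$ on $\cochains$, and the analogous identity for $\s^i$. The case analysis on the position $j$ (relative to both the vertices of $\tau$ and of its complement $\tau^c$, which interact under a face or degeneracy in opposite ways) together with the suspension signs produced by $\susp{}$ is where all the care goes; but it is a finite check with the formulas already on the table, so no genuinely new input is needed.
\end{proof}
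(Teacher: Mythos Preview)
Your approach is correct and essentially spells out what the paper leaves implicit: the lemma is stated there without proof, followed only by the explicit formula $U\ot\tau\mapsto (-1)^{\lambda(U^c,U)+(n+1)m}U^c\ot\tau$. Your three-step verification (bijection on underlying modules, descent to the coequalizers, compatibility with differentials) is exactly the right skeleton.

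One small overstatement worth noting: you write that ``the content is to match the sign $(-1)^{i+j+m+n+1}$ with the difference of Alexander-duality signs,'' but in fact there is no content here for the lemma itself. The paper \emph{defines} the cosimplicial structure on $\cochains(\asimplex^\bullet)$ by transport through $\Lambda$ (``Via Alexander duality, we may endow\dots''), so the square you draw commutes by fiat. The explicit signed formula for $\d^i$ on $\cochains$ is a separate computation recording what the transported structure looks like, and verifying it against $\lambda(\tau,\tau^c)$ via \cref{remark:alex} is logically independent of the lemma. So your ``granting that'' is really granting a definition, not a check, and the last paragraph about the case analysis on $j$ is unnecessary for the lemma (though it would be needed if you wanted to confirm the paper's displayed formula for $\d^i$ on cochains is correct).
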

More specifically, the map sends $U \ot \tau$ to $(-1)^{\lambda(U^c,U)+(n+1)m} U^c \ot \tau$.
\begin{example}\label{example:first3} If $U \ot \tau\in \cochains(\asimplex^\bullet) \ot_{\asimplex} \rA(X)_\bullet$ is given, one can compute its image on $\chains(X)$ using the relations
\[
	\d^i U \ot \tau = (-1)^{m} U \ot \d_i\tau
\]
 (with $U$ of degree $m$) in the functor tensor product and that $\emptyset \ot \tau \mapsto \tau$. For example, if $r=3$ and $X=\asimplex^7$ and $\tau = [0,1,2,3,4,5,6,7]$ is the top simplex,
\begin{align*}
		(0,6) \ot \tau &= (-1)^{6+1+2+8}(-1)^2(0) \ot [0,1,2,3,4,5,7]
		\\
		&= -(-1)^{0+0+1+8}(-1)^1\emptyset \ot [1,2,3,4,5,7]\mapsto -[1,2,3,4,5,7].
\end{align*}
Alternatively, using the map of Lemma \ref{lemma:2},
\[
	(0,1,3,4) \ot \tau = +(-1)^{8\cdot 4}[2,5,6,7].
\]
\end{example}

\begin{example}\label{example:first3'}
	If $r=3$ and $X=\asimplex^{19}$ and $\tau = [0,2,3,4,5,6,9]$,
	\begin{align*}
		((0,6) \ot \tau) &= -(-1)^{20\cdot 2} [2,3,4,5,6]
		\\
		((0,1,3,4) \ot \tau) &= +(-1)^{20\cdot 4}[3,6,9].
	\end{align*}
\end{example}

\begin{example}\label{example:first5}
	If $r=5$ and $X=\asimplex^2$ and $\tau = [0,1,2]$ is the top simplex,
	\[
	\begin{split}
	(1) \ot \tau &= - (-1)^{3\cdot 1} [0,2]
	\\
	(1,2) \ot \tau &= +(-1)^{3\cdot 2} [0]
	\\
	(0,1,2) \ot \tau &= +(-1)^{3\cdot 3}\emptyset
	\\
	(2) \ot \tau &= +(-1)^{3\cdot 1} [0,1].
	\end{split}
	\]
\end{example}

The $r$-fold tensor product $\cochains(\asimplex^\bullet)^{\ot r}$ becomes an augmented cosimplicial chain complex with face maps of degree $-r$, while the $r$-fold tensor product $\rA(X)_\bullet^{\ot r}$ becomes a simplicial chain complex with face maps of degree $-r$. Specifically, the faces and degeneracies of the former are as follows:
\begin{align*}
	\d^i \colon \cochains(\asimplex^n)^{\ot r}& \lra \susp{r}\cochains(\asimplex^{n+1})^{\ot r}
	&
	\s^i \colon \cochains(\asimplex^{n+1})^{\ot r}& \lra \susp{-r}\cochains(\asimplex^{n})^{\ot r}
\end{align*}
given by
\begin{align*}
	\d^i(U_0 \ot \dots \ot U_{r-1}) &= (-1)^{\sum_k (r-1-k)m_k}\d^i(U_0) \ot \dots \d^i(U_{r-1})
	\\
	\s^i(U_0 \ot \dots \ot U_{r-1}) &= (-1)^{\sum_k (r-1-k)m_k}\d^i(U_0) \ot \dots \d^i(U_{r-1})
\end{align*}
where $k = 0\dots r-q$ and $m_k$ is the degree of $U_k$ (the sign comes from the fact that the maps $\d^i$ in each tensor factor have degree $1$, hence one has to pay a sign for moving it accross the other tensor factors). Similarly, the $r$-fold tensor product $\rA(X)^{\ot r}_{\bullet}$ becomes an augmented simplicial graded $R$-module with face maps of degree $-r$ and degeneracy maps of degree $r$
\begin{align*}
	\d_i \colon \rA(X)_n^{\ot r} & \lra \rA(X)_{n-1}^{\ot r}
	&
	\s_i \colon \rA(X)_n^{\ot r} & \lra \rA(X)_{n+1}^{\ot r}
\end{align*}
given by
\begin{align*}
	\d_i(\tau_0 \ot \dots \ot \tau_{r-1}) &= (-1)^{\sum_k (r-1-k)(n+1)} \d_i(\tau_0) \ot \dots \ot \d_i(\tau_{r-1})
	\\
	\s_i(\tau_0 \ot \dots \ot \tau_{r-1}) &= (-1)^{\sum_k (r-1-k)(n+1)} \s_i(\tau_0) \ot \dots \ot \s_i(\tau_{r-1})
	\end{align*}
\begin{lemma}\label{lemma:3}
	The chain map
	\[\cochains(\asimplex^\bullet)^{\ot r} \ot_{\asimplex} \rA(X)_\bullet^{\ot r} \lra \left(\cochains(\asimplex^\bullet) \ot_{\asimplex} \rA(X)_\bullet\right)^{\ot r}\]
	given by
	\begin{align*}
		(U_0 \ot \dots \ot U_{r-1}) \ot &(\tau_0 \ot \dots \ot \tau_{r-1})
	\\
 &\mapsto (-1)^{(n+1)\sum_{k} km_k}(U_0 \ot \tau_0) \ot \dots \ot (U_{r-1} \ot \tau_{r-1})
	\end{align*}
where $k=0\dots r-1$, the element $\tau$ has degree $n+1$ and $U_k$ has degree $m_k$, is a $\Cyc_r$-equivariant monomorphism of chain complexes.
\end{lemma}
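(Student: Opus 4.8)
The plan is to verify three properties of the proposed map: that it is well-defined on the functor tensor product, that it is a chain map, and that it is a $\Cyc_r$-equivariant monomorphism. The underlying idea is that the map is simply the $r$-fold tensor power of the isomorphism of \cref{lemma:2}, precomposed with the canonical ``shuffle'' comparison between the functor tensor product of a tensor power and the tensor power of a functor tensor product; the only work is to pin down the sign $(-1)^{(n+1)\sum_k k m_k}$ that makes everything strictly commute with all the structure maps introduced just above.

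First I would check well-definedness. The source $\cochains(\asimplex^\bullet)^{\ot r}\ot_\asimplex \rA(X)_\bullet^{\ot r}$ is a quotient by the relations coming from the cosimplicial face/degeneracy maps $\d^i,\s^i$ on $\cochains(\asimplex^\bullet)^{\ot r}$ (degree $-r$) and the simplicial ones on $\rA(X)_\bullet^{\ot r}$; I would substitute the explicit formulas for these (displayed immediately before the lemma) into the candidate assignment and use that \cref{lemma:2} already respects the degree $\pm1$ relations in each tensor slot, so that the accumulated signs from moving the $r$ individual $\d^i$'s (each of degree $1$) past one another exactly match the global sign $(-1)^{\sum_k(r-1-k)m_k}$ appearing in the definition of $\d^i$ on the $r$-fold product, and similarly the $(n+1)$-dependent signs on the $\rA(X)$ side cancel against the change in $n$. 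Then I would verify that the map is a chain map: the differential on both sides is a signed sum of the internal differentials in each of the $2r$ tensor factors, and since \cref{lemma:2} is an isomorphism of chain complexes in each slot, the only thing to track is that the Koszul signs generated by the differential crossing tensor factors on the left agree with those on the right after inserting the global sign $(-1)^{(n+1)\sum_k k m_k}$; this is a bookkeeping computation using the sign conventions for tensor products of chain maps with the left/right suspension recorded in \cref{s:preliminaries}.

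Equivariance under $\Cyc_r$ is where the chosen sign really earns its keep, and I expect this to be the main obstacle. The generator $\rho$ acts on the left-hand side by cyclically permuting the $U$-factors and (separately) the $\tau$-factors, and on the right-hand side by cyclically permuting the $r$ tensor factors $(U_k\ot\tau_k)$; matching these introduces a Koszul sign depending on the degrees $m_k$ and on $n+1 = |\tau_k|$, and one must check that $(-1)^{(n+1)\sum_k k m_k}$ transforms correctly under the reindexing $k\mapsto k-1 \bmod r$ — in particular the ``wrap-around'' term, where $U_{r-1}\ot\tau_{r-1}$ jumps to the front, contributes a sign involving $(r-1)$ and $\sum_k m_k$ that has to be absorbed. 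I would do this by writing $\rho$ as the transposition of the last factor past the first $r-1$, computing the sign from the conventions in \cref{s:preliminaries}, and comparing. Finally, injectivity is immediate: the map is, slot by slot, built from the isomorphism of \cref{lemma:2} and from the natural map $A^{\ot r}\ot_\asimplex B^{\ot r}\to (A\ot_\asimplex B)^{\ot r}$ which is a split monomorphism (a section is given by the diagonal insertion $[k]\hookrightarrow[k]^{\times r}$ into the coend), so composing preserves monomorphism; alternatively one notes the target is free with an explicit basis and exhibits the image as the span of a subset of that basis. I would close by remarking that surjectivity fails in general — the image consists precisely of those tensors supported on $r$-tuples of faces that are jointly ``non-overlapping'' in the sense foreshadowed in the introduction — which is exactly why this is only a monomorphism and motivates the refinement carried out in the next subsection.
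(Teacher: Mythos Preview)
The paper does not actually supply a proof of this lemma; it is stated and immediately followed by a remark and the definition of $\alpha$. So there is nothing to compare your approach against, and your overall plan---verify well-definedness on the coend, verify the chain map property, verify $\Cyc_r$-equivariance, verify injectivity---is the natural and correct sequence of checks.

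That said, you have misidentified what the map \emph{is}. The map of \cref{lemma:3} does not involve the Alexander duality isomorphism of \cref{lemma:2} at all: both source and target are built from $\cochains(\asimplex^\bullet)$ and $\rA(X)_\bullet$, and the map is nothing more than the canonical comparison
\[
\int^{[n]} A_n^{\ot r}\ot B_n^{\ot r}\;\lra\;\Big(\int^{[n]} A_n\ot B_n\Big)^{\ot r}
\]
together with the Koszul sign coming from reshuffling the factors. Your description ``the $r$-fold tensor power of the isomorphism of \cref{lemma:2}, precomposed with the shuffle comparison'' is the map $\alpha$ defined \emph{after} \cref{lemma:3}, not the map of \cref{lemma:3} itself. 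This confusion propagates: when you write ``since \cref{lemma:2} is an isomorphism of chain complexes in each slot'' as justification for the chain map property, you are invoking something irrelevant. The chain map check is purely about the Koszul sign and the cosimplicial/simplicial identities displayed just before the lemma; no Alexander duality enters.

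Two further corrections. First, your proposed section ``the diagonal insertion $[k]\hookrightarrow [k]^{\times r}$ into the coend'' does not make sense as written, since the source coend is over a single copy of $\asimplex$, not over $\asimplex^{\times r}$. Injectivity (indeed, bijectivity in the simplicial case---note that the definition of $\alpha$ calls this a chain \emph{isomorphism}, and the remark after the lemma explains that only in the semi-simplicial case does it drop to a mere monomorphism) follows more directly by observing that every element of the target can be brought, using degeneracies in each slot, to a representative with all $r$ factors at a common simplicial level. Second, your closing remark about the image consisting of ``non-overlapping'' tuples conflates two different things: the overlapping/non-overlapping dichotomy from the introduction concerns the surjection operad and the later complex $\Om(r,n)$, not the comparison map here.
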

\begin{remark}
	Had we worked with augmented semi-simplicial objects, this map would be only a monomorphism.
\end{remark}
\begin{example}\label{example:reord} If $r=3$ and $X=\asimplex^7$ and $\tau = [0,1,2,3,4,5,6,7]$ is the top simplex,
\[
		\alpha((\emptyset \ot (0,6) \ot (0,1,3,4)) \ot \tau^{\ot 3}) = (-1)^{8\cdot 10}(\emptyset \ot \tau) \ot ((0,6) \ot \tau) \ot ((0,1,3,4) \ot \tau)
	\]
 If $r=3$ and $X=\asimplex^{19}$ and $\tau = [0,2,3,4,5,6,9]$,
\[
		\alpha((\emptyset \ot (0,6) \ot (0,1,3,4)) \ot \tau^{\ot 3}) = (-1)^{7\cdot 10}(\emptyset \ot \tau) \ot ((0,6) \ot \tau) \ot ((0,1,3,4) \ot \tau)
	\]
 If $r=5$ and $X=\asimplex^2$ and $\tau = [0,1,2]$ is the top simplex,
	\[
	\begin{split}
		\alpha(((1) \ot (1,2) \ot \emptyset \ot (0,1,2) \ot (2)) \ot \tau^{\ot 5}) &=
		\\
		=(-1)^{3\cdot 15}((1) \ot \tau) \ot ((1,2) \ot \tau)& \ot (\emptyset \ot \tau) \ot ((0,1,2) \ot \tau) \ot ((2) \ot \tau)
		\end{split}
	\]
	\end{example}

\begin{definition} Define the chain map
	\[\alpha \colon \cochains(\asimplex^\bullet)^{\ot r} \ot_{\asimplex} \rA(X)_\bullet^{\ot r} \lra \chains(X)^{\ot r}\]
as the composition of the chain isomorphism of Lemma \ref{lemma:3} and the $r$-fold tensor product of the composition of the chain isomorphisms of \eqref{eq:1} and Lemma \ref{lemma:2}.
\end{definition}

From Examples \ref{example:first3}, \ref{example:first3'}, \ref{example:first5} and \ref{example:reord}, we can compute the following:
\begin{example}\label{example:alpha} If $r=3$ and $X=\asimplex^7$ and $\tau = [0,1,2,3,4,5,6,7]$ is the top simplex,
\[
		\alpha((\emptyset \ot (0,6) \ot (0,1,3,4)) \ot \tau^{\ot 3}) = -\tau \ot [1,2,3,4,5] \ot [2,5,6,7]
	\]
	where the sign coming from the permutation of the factors of the tensor product is $(-1)^{10\cdot 7}$ and the sign coming from the $\d_i$'s is negative. If $r=3$ and $X=\asimplex^{19}$ and $\tau = [0,2,3,4,5,6,9]$,
\[
		\alpha((\emptyset \ot (0,6) \ot (0,1,3,4)) \ot \tau^{\ot 3}) = -\tau \ot [2,3,4,5,6] \ot [3,6,9]
	\]
	where the sign coming from the permutation of the factors of the tensor product is $(-1)^{10\cdot 6}$ and the sign coming from the $\d_i$'s is negative. If $r=5$ and $X=\asimplex^2$ and $\tau = [0,1,2]$ is the top simplex,
	\[
		\alpha(((1) \ot (1,2) \ot \emptyset \ot (0,1,2) \ot (2)) \ot \tau^{\ot 5}) = -[0,2] \ot [0] \ot [0,1,2] \ot \emptyset \ot [0,1]
	\]
		where the sign coming from the permutation of the factors of the tensor product is $(-1)^{3\cdot 15}$ and the sign coming from the $\d_i$'s is positive.
	\end{example}
\subsection{The chain complex \texorpdfstring{$\Om(r,n)$}{Omega(r,n)}} If $X$ is an augmented simplicial object, we denote by $\uchains(X)$ the right suspension $\sus{}\ucadenas(X)$. Let $\Om(r,n)$ be the quotient of $\chains(\asimplex^{n}\times \EC_r)$ by the equivalence relation generated as follows: If $U = (u_0,\dots,u_{m-1})\in \uchains(\asimplex^n)$ is an unnormalized generator with $u_i\leq u_{i+1}$ and $A = (a_0,\dots,a_{m-1})\in \uchains(\EC_r)$ is another unnormalized generator, then
\begin{itemize}
	\item $(U,A)\sim (-1)^{\sign{\perm}}(U,A')$ if there is an interval $u_{i-1}<u_i =\dots =u_{i+k}<u_{i+k+1}$ such that the sequences $(a_i,\dots,a_{i+k})$ and $(a'_i,\dots,a'_{i+k})$ differ by a permutation $\perm$ and $A$ and $A'$ agree outside that interval.
\end{itemize}

Given a pair $(U,A)$ as above, define $U_A^i = \{u_j\in U\mid a_j=i\}$. There is an isomorphism
\[
	\beta \colon \Om(r,n) \lra \chains(\asimplex^n)^{\ot r}
\]
that sends a pair $(U,A)$ to $(-1)^{\sign{\perm(A)}}\ U_A^0 \ot \dots \ot U_A^{r-1}$, where $\perm(A)$ is the permutation that arranges $A$ in ascending order without permuting entries with the same value.
\begin{example}\label{example:beta} If $r=3$ and $n \geq 6$:
\[
		\beta((0,0,1,3,4,6),(0,1,0,0,0,1)) = -(0,1,3,4) \ot (0,6) \ot \emptyset
	\]
If $r=5$ and $n \geq 2$:
	\[
		\beta((0,1,1,1,2,2,2),(1,3,4,1,1,3,0)) = -(2) \ot (0,1,2) \ot \emptyset \ot (1,2) \ot (1)
	\]
	\end{example}

Given a pair $(U,A)$ as above, define $A' = (a_0+u_0,\dots,a_m+u_m)$ where the sum is taken modulo $r$. There is also an automorphism
\[
	\gamma \colon \Om(r,n) \lra \Om(r,n)
\]
that sends a pair $(U,A)$ to the pair $(U,A')$.
	\begin{example}\label{example:gamma} If $r=3$ and $n \geq 6$:
	\[
		\gamma((0,0,1,3,4,6),(0,1,2,0,2,1)) = ((0,0,1,3,4,6),(0,1,0,0,0,1))
	\]
If $r=5$ and $n \geq 2$:
	\[
		\gamma((0,1,1,1,2,2,2),(1,2,3,0,4,1,3)) = ((0,1,1,1,2,2,2),(1,3,4,1,1,3,0))
		\]
\end{example}

The compositions
\begin{align*}
	\Omd(r,n)&\overset{\beta^\dd}{\lra} \left(\chains(\asimplex^n)^{\ot r}\right)^\dd \lra \cochains(\asimplex^n)^{\ot r}
	\\
	\Omd(r,n)\overset{\gamma^\dd}{\lra}	\Omd(r,n)&\overset{\beta^\dd}{\lra} \left(\chains(\asimplex^n)^{\ot r}\right)^\dd \lra \cochains(\asimplex^n)^{\ot r}
\end{align*}
	yield two augmented cosimplicial chain complexes with $[n]\mapsto \Omd(r,n)$. The first is denoted $\Omhatd(r,\bullet)$ and the second is denoted simply by $\Omd(r,\bullet)$.
\begin{example}\label{example:betadual3} If $r=3$ and $n=7$:
\[
		\beta^\dd((0,0,1,3,4,6),(0,1,0,0,0,1)) = -\emptyset \ot (0,6) \ot (0,1,3,4)
	\]
	\end{example}
	\begin{example}\label{example:betadual5} If $r=5$ and $n=2$:
	\[
		\beta^\dd((0,1,1,1,2,2,2),(1,3,4,1,1,3,0)) = -(1) \ot (1,2) \ot \emptyset \ot (0,1,2) \ot (2)
	\]
	\end{example}

	The face maps of these cosimplicial chain complexes have degree $-1$ and act as follows: Let $k$ be such that $u_{k-1}<i\leq u_k$, and let $U = (u_0,\dots,u_{m-1})$ and $A = (a_0,\dots,a_{m-1})$, then,
	\[
		\d^i \colon \Omhatd(r,n) \to \susp{}\Omhatd(r,n+1)
	\]
		is given by
	\[\d^i(U, A) = (-1)^{r(i+j+m+n+1)}((u'_0,\dots,u'_{m+r-1}),(a'_0,\dots,a'_{m+r-1}))\] with
\begin{align}\label{eq:Omegahat}
	u'_j &=
	\begin{cases}
		u_j &\text{ if $j<k$} \\
		i & \text{ if $k\leq j < k+r$} \\
		u_{j-r} + 1 & \text{ if $j \geq k+r$.}
	\end{cases}
	&
	a'_j &=
	\begin{cases}
		a_j &\text{ if $j<k$} \\
		j-i & \text{if $k\leq j<k+r$} \\
		a_{j-r} & \text{ if $j \geq k+r$.}
	\end{cases}
\end{align}
whereas
\[
	\d^i \colon \Omd(r,n) \to \susp{}\Omd(r,n+1)
	\]
	is given by
\begin{align*}
	\d^i(U, A) &= (-1)^{r(i + j + m + n + 1)}((u'_0,\dots,u'_{m+r-1}),(a'_0,\dots,a'_{m+r-1}))
\end{align*}
with
\begin{align}\label{eq:Theta}
	u'_j &=
	\begin{cases}
		u_j &\text{ if $j<k$} \\
		i & \text{ if $k\leq j < k+r$} \\
		u_{j-r} + 1 & \text{ if $j \geq k+r$.}
	\end{cases}
	&
	a'_j &=
	\begin{cases}
		a_j &\text{ if $j<k$} \\
		r-i+j-k & \text{if $k\leq j<k+r$} \\
		a_{j-r}-1 & \text{ if $j \geq k+r$.}
	\end{cases}
\end{align}
The cyclic group $\Cyc_r$ acts on $\Om(r,n)$ and $\Omhat(r,n)$ as follows: $\rho(U,A) = (U,\rho A)$, with $\rho(a_0,\dots,a_{m-1}) = (\rho a_0,\dots,\rho a_{m-1})$. On $\Omd(r,n)$ and $\Omhatd(r,n)$ we have the dual action (recall that $\rho(a_0^\vee) = (\rho^{-1}a_0)^\vee$).

\begin{remark}
	Despite the complex $\Om(r,n)$ is defined as a quotient of $\chains(\asimplex^n\times \EC_r)$, the augmented cosimplicial structure on $\Omd(r,n)$ is not induced by any augmented cosimplicial structure on $\cochains(\asimplex^n\times \EC_r)$ for $n \geq -1$.
\end{remark}
As a consequence, there are $\Cyc_r$-equivariant natural isomorphisms
\[\Omd(r,\bullet)\overset{\gamma^\dd}{\lra} \Omhatd(r,\bullet)\overset{\beta^\dd}{\lra} \cochains(\asimplex^\bullet)^{\ot r}.\]
Taking functor tensor products, we obtain $\Cyc_r$-equivariant chain isomorphisms
\[\Omd(r,\bullet) \ot_{\asimplex} \rA(X)_\bullet^{\ot r}\overset{\gamma}{\lra} \Omhatd(r,\bullet) \ot_{\asimplex} \rA(X)_\bullet^{\ot r}\overset{\beta}{\lra} \cochains(\asimplex^\bullet)^{\ot r} \ot_{\asimplex} \rA(X)_\bullet^{\ot r}.\]

\subsection{A connected diagonal}\label{s:mainresult} A \emph{full piece} in a pair $(U,A)\in \Om(r,n)$ is a nondegenerate subpair $(u_i,\dots,u_{i+r-1}),(a_i,\dots,a_{i+r-1})$ of $(U,A)$ such that $u_i = u_{i+1} = \dots = u_{i+r-1}$ (and therefore $a_i,\dots,a_{i+r-1}$ are all distinct).

Let $\Om(r,n)^{\nf} \subset \Om(r,n)$ be the subcomplex generated by those pairs $(U,A)$ without full pieces. If $(U,A)$ has a single full piece, then $\partial(U,A) = \partial^{\nf}(U,A) + \partial^{\f}(U,A)$, where the summands in $\partial^{\nf}(U,A)$ contain no full pieces and the summands in $\partial^{\f}(U,A)$ contain one full piece. Recall that $\Delta$ is the natural diagonal in $\cC$ of \ref{s:categories}, and induces a map $L(\Delta) \colon \rA(X)_n \to \rA(X)_n^{\ot r}$ of degree $(r-1)n$ that we will denote also by $\Delta$. Recall the definition of $\rcadenas(X)$ from \cref{s:connected}, and observe that $\Delta$ also induces a homomorphism $\Delta \colon \rcadenas[rn](X) \to \rA(X)_n^{\ot r}$ of degree $0$.

\renewcommand{\Psiom}{\Psi}

\begin{proposition}\label{prop:omegarm}
	Let $r$ be odd or $r=2$ with $R=\mathbb{F}_2$. Suppose given a family of $\Cyc_r$-equivariant homomorphisms
	\begin{align*}
		\Psiom_*^n \colon \Om(r,n)^{\nf}& \lra \rW(r) & n \geq 0
	\end{align*}
	related by the following equation: if $(U,A)\in \Om[q](r,n)$ has a single full piece,
	\begin{equation}
		 \label{it:1a}
		\Psiom^n_{q-1}(\partial^{\nf} (U,A)) = (-1)^q\theta_{1-r}\circ \Psiom^{n-1}_{q-r}\circ \left(\sum_i (-1)^i \d_i(U,A)\right),
	\end{equation}
	where $\d_i$ is the dual of $\d^i$. Then there is a chain map
	\[
	\Psi \colon \rWd(r) \hotimes \rchains(X) \lra \Omd(r,\bullet) \ot_{\asimplex} \rA(X)_\bullet^{\ot r}
	\]
	defined as $\Psi(e_{-q} \ot \vec{\tau}) = \left(\Psiom^n_q\right)^\vee(e_{-q}) \ot \Delta(\tau)$ if $\tau\in \rA(X)_{n}$.
\end{proposition}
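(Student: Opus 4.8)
The plan is to verify directly that the proposed formula
\[
\Psi(e_{-q} \ot \vec{\tau}) = \left(\Psiom^n_q\right)^\vee(e_{-q}) \ot \Delta(\tau), \qquad \tau\in \rA(X)_n,
\]
defines a chain map, and the work splits into three parts: checking the formula is well-defined (i.e.\ respects the identifications defining $\rWd(r)\hotimes\rchains(X)$ and the functor tensor product on the target), checking $\Cyc_r$-equivariance, and checking compatibility with the differentials. Equivariance is immediate from the hypothesis that each $\Psiom^n_*$ is $\Cyc_r$-equivariant, together with the fact that $\Delta$ factors through the natural cyclic diagonal (so $\rho\circ\Delta = \Delta$) and that dualization reverses the action as recorded in the paper. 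Well-definedness is mostly bookkeeping: $\rchains[rn](X)$ is built as the functor tensor product over $\asimplex$, and one checks that the image of $\vec{\tau}$ under $\Delta$ lands consistently in $\Omd(r,\bullet)\ot_\asimplex \rA(X)_\bullet^{\ot r}$, using that $\Delta$ is natural in $\asimplex$ and hence commutes with the cosimplicial/simplicial structure maps that one quotients by. I would dispatch these two points briskly.

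The heart of the proof is the differential computation. Recall that $\rWd(r)\hotimes\rchains(X)$ has differential
\[
\partial(e^\dd_{-q} \ot \vec{\tau}) = \partial(e^\dd_{-q}) \ot \vec{\tau} + (-1)^q \theta_{r-1}(e^\dd_{-q}) \ot \partial(\vec{\tau}),
\]
while on the target $\Omd(r,\bullet)\ot_\asimplex \rA(X)_\bullet^{\ot r}$ the differential has an ``internal'' part (the differential of $\Omd(r,\bullet)$, i.e.\ the part of $\partial$ on $\Om(r,n)$ that is dual to the internal differential) and a ``simplicial'' part coming from $\sum_i(-1)^i\d_i$ on $\rA(X)^{\ot r}_\bullet$, traded across the functor tensor product. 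So I would apply $\Psi$ to $\partial(e^\dd_{-q}\ot\vec\tau)$, apply $\partial$ to $\Psi(e^\dd_{-q}\ot\vec\tau)$, and match terms. The term $\partial(e^\dd_{-q})\ot\vec\tau$ produces $(\Psiom^n_{q-1})^\vee$ applied to $\partial$ of the resolution, which by duality corresponds to $\Psiom^n_{q-1}\circ\partial$ on $\Om(r,n)^{\nf}$; splitting $\partial = \partial^{\nf}+\partial^{\f}$ on a pair with a single full piece, the $\partial^{\nf}$ half is exactly what the internal differential on $\Omd(r,\bullet)$ contributes. The remaining pieces — $\partial^{\f}(U,A)$ on one side and $(-1)^q\theta_{r-1}(e^\dd_{-q})\ot\partial(\vec\tau)$ routed through $\sum_i(-1)^i\d_i$ on the other — must cancel, and this is precisely what the hypothesis \eqref{it:1a} guarantees: dualizing that identity, with $\theta_{1-r}$ dual to $\theta_{r-1}$ and $\d_i$ dual to $\d^i$, turns it into the statement that the two leftover contributions agree.

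The main obstacle is sign bookkeeping, and it is genuinely delicate here because several suspension conventions are in play at once: $\rchains(X) = \bigoplus_m \sus{(r-1)m}\ucadenas[m]$ carries a shifted differential of degree $-r$, the map $\Delta\colon \rchains[rn](X)\to\rA(X)^{\ot r}_n$ is declared to have degree $0$ only after incorporating the left-suspension signs built into $\rA(X)_n = \susp{n+1}\abel(X)_n$ and into the degree-$(r-1)n$ map $L(\Delta)$, the cosimplicial structure on $\Omd(r,\bullet)$ and on $\cochains(\asimplex^\bullet)^{\ot r}$ carries the signs displayed in \eqref{eq:Theta} and the $(-1)^{\sum_k(r-1-k)m_k}$-type reordering signs, and dualization swaps left and right suspension (hence flips signs) as emphasized in the preliminaries. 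I would organize this by fixing, once and for all, the degree of each building block, then tracking the Koszul signs incurred when $\theta_{r-1}$ (which lives in the resolution factor) is moved past $\partial(\vec\tau)$ and when the dual of $\sum_i(-1)^i\d^i$ is identified with $\sum_i(-1)^i\d_i$ on $\rA(X)^{\ot r}$; the factor $(-1)^q$ in both \eqref{it:1a} and in the differential of $\rWd(r)\hotimes\rchains(X)$ is the anchor that makes the signs line up. A useful sanity check along the way is to test the identity on the low-dimensional examples already computed in the text (Examples \ref{example:betadual3} and \ref{example:betadual5}), which pins down the global sign.
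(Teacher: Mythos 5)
Your plan follows the paper's proof exactly: split the differential on $\Omd(r,\bullet)\ot_{\asimplex}\rA(X)_\bullet^{\ot r}$ into the piece seen by $\partial^{\nf}$ (handled because each $\Psiom^n$ is a chain map) and the piece seen by $\partial^{\f}$, then dualize hypothesis~\eqref{it:1a} and trade $\sum_i(-1)^i\d^i$ across the functor tensor product to produce the $(-1)^q\theta_{r-1}(e^{\dd}_{-q})\ot\partial(\vec\tau)$ term. The one thing worth noting is that the paper's sign check is shorter than your plan suggests — dualizing~\eqref{it:1a} and moving $\d^i$ across the tensor product costs exactly $(-1)^{r-1-q}=(-1)^q$ (as $r$ is odd), so there is no separate well-definedness issue to dispatch since $\rWd(r)\hotimes\rchains(X)$ is defined directly as a graded module with twisted differential, not as a quotient.
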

\def\diaglin{\Delta}

\begin{proof}
The suspension homomorphism $\theta_{2k}$ always rises the degree by $2k$, so the dual of $\theta_{2k}$ is $\theta_{-2k}$. We will write $\delta = \partial^\vee$, where $\partial$ is the differential of $\rW(r)$ or $\Om(r,n)$. We have
\begin{align*}
	\delta \Psi(e_{-q}^\dd \ot \vec{\tau})
		&= \delta \left(\Psiom^n\right)^{\vee}(e_{-q}^\dd) \ot \diaglin(\tau)\\
		&= \left(\delta\left(\Psiom^n\right)^{\vee}(e_{-q}^\dd)\right)^{\nf} \ot \diaglin(\tau) + \left(\delta\left(\Psiom^n\right)^{\vee}(e_{-q}^\dd)\right)^{\f} \ot \diaglin(\tau)
\end{align*}
while
\begin{align*}
	\Psi(\delta(e_{-q}^\dd \ot \vec{\tau}))
		&=\Psi(\delta(e_{-q}^\dd) \ot \vec{\tau}) + (-1)^{q} \sum_i (-1)^{i} \Psi(\theta_{r-1}(e_{-q}^\dd) \ot \d_i \vec{\tau})
	\end{align*}
The first summand of the second expression is then $\left(\Psiom^n\right)^\vee(\delta e_{-q}) \ot \diaglin(\tau)$, which equals the first summand of the first expression because $\Psiom^{n}$ is a chain map. For the second summands, we first use the dual of the condition of the Proposition: If $(U,A)$ has a full piece and degree $q+1$,
\begin{align*}
	\left(\delta\left(\Psiom^n\right)^\vee (e_{-q}^\dd)\right)^{\f}(U,A)
	&= \left\langle (-1)^{q+1}\Psiom^n\circ \partial^{\nf}(U,A),e_{q}\right\rangle
	\\
	&= \left\langle (-1)^{q+1}(-1)^{q+1}\theta_{1-r}\circ \Psiom^{n-1}\circ \sum_i (-1)^i \d_i(U,A),e_q\right\rangle
	\\
	&= \left(\sum_i (-1)^i\d^i\circ \left(\Psiom^{n-1}\right)^{\vee}\circ \theta_{r-1}(e_{-q}^\dd)\right)(U,A).
\end{align*}
Hence,
\begin{align*}
	\left(\delta\left(\Psiom^n\right)^{\vee}(e_{-q}^\dd)\right)^{\f} \ot \diaglin(\tau)
		&= \sum_i (-1)^i \d^i\circ \left(\Psiom^{n-1}\right)^\vee\circ \theta_{r-1} (e_{-q}^\dd) \ot \diaglin(\tau)
		\\
		&=(-1)^{r-1-q}\sum_i (-1)^i\left(\Psiom^{n-1}\right)^\vee \circ \theta_{r-1} (e_{-q}^\dd) \ot \d_i\diaglin(\tau)
		\\
		&= (-1)^{r-1-q}\sum_i (-1)^i\left(\Psiom^{n-1}\right)^\vee \circ \theta_{r-1} (e_{-q}^\dd) \ot \diaglin(\d_i\tau)
		\\
	&= (-1)^q\sum_i (-1)^{i} \Psi(\theta_{r-1}(e_{-q}^\dd) \ot \d_i \vec{\tau})).\qedhere
\end{align*}
\end{proof}

In Lemma \ref{lemma:omegar} we will give a criterion to construct a family $\{\Psiom^n\}_n$ satisfying the conditions of Proposition \ref{prop:omegarm}. In Propositions \ref{prop:chain} and \ref{prop:condition} we will reduce this criterion to the construction of a map $f$ satisfying the properties of Construction \ref{cons:1}. Finally, in Proposition \ref{prop:assemblage} and Lemma \ref{lemma:straightening} we will explain how to build the map $f$ from an $r$-cyclic straightening with duality. Altogether they provide the main theorem of this article:

\begin{theorem}\label{thm2:mainthm} Every $r$-cyclic straightening yields a natural $r$-cyclic connected diagonal for augmented semi-simplicial objects in $\cC$
\[
	\mu \colon \rWd(r) \hotimes \rchains(X) \lra \ucadenas(X)^{\ot r}
\]
defined as the composition
	$
	\mu = \alpha\circ\beta\circ\gamma\circ \Psi.
	$
\end{theorem}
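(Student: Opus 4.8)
The proof is the promised assembly, where the substance lies entirely in the steps delegated to later sections. Three of the four arrows in $\mu=\alpha\circ\beta\circ\gamma\circ\Psi$ are already at hand: applying the functor tensor product $-\ot_{\asimplex}\rA(X)_\bullet^{\ot r}$ to the natural $\Cyc_r$-equivariant cosimplicial maps $\Omd(r,\bullet)\xra{\gamma^\dd}\Omhatd(r,\bullet)\xra{\beta^\dd}\cochains(\asimplex^\bullet)^{\ot r}$ produces the chain maps $\gamma$ and $\beta$, and $\alpha$ is the $\Cyc_r$-equivariant chain map of the preceding subsection, whose codomain is the $r$-fold tensor power of $\chains(X)$. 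So the only thing to construct is the first arrow
\[
\Psi\colon \rWd(r)\hotimes\rchains(X)\lra \Omd(r,\bullet)\ot_{\asimplex}\rA(X)_\bullet^{\ot r}
\]
as a $\Cyc_r$-equivariant chain map, after which one records that the composite is natural in $X$.

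To build $\Psi$ I would invoke \cref{prop:omegarm}: it suffices to produce a family of $\Cyc_r$-equivariant homomorphisms $\Psi^n\colon\Om(r,n)^{\nf}\to\rW(r)$ satisfying the coherence relation \eqref{it:1a}, which ties $\Psi^n\circ\partial^{\nf}$ on a pair with a single full piece to $\theta_{1-r}\circ\Psi^{n-1}$ applied to the alternating sum of its cosimplicial faces. The existence of such a family is exactly what the rest of the paper is organized to deliver: \cref{lemma:omegar} reduces it to a combinatorial criterion; \cref{prop:chain,prop:condition} show this criterion holds once one has a map $f$ with the properties of \cref{cons:1}; and \cref{prop:assemblage} together with \cref{lemma:straightening} produce such an $f$ from the data of the given $r$-cyclic straightening with duality. (It is here that the hypothesis ``$r$ odd, or $r=2$ with $R=\bF_2$'', which is the standing hypothesis of \cref{prop:omegarm}, is used.) Feeding $\{\Psi^n\}$ into \cref{prop:omegarm} then outputs $\Psi$, given on generators by $\Psi(e_{-q}^\vee\ot\vec{\tau})=(\Psi^n_q)^\vee(e_{-q}^\vee)\ot\Delta(\tau)$ for $\tau\in\rA(X)_n$.

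Two formal checks finish the argument. For $\Cyc_r$-equivariance of $\mu$: each $\Psi^n$ is $\Cyc_r$-equivariant, hence its dual intertwines the ``backwards'' action on $\rWd(r)$ with the action on $\Omd(r,\bullet)$, while $\Delta$ satisfies $\rho\circ\Delta=\Delta$ as it is a natural cyclic diagonal; so $\Psi$ is $\Cyc_r$-equivariant, and $\gamma$, $\beta$ are equivariant as above, and $\alpha$ is equivariant being the composite of the equivariant map of \cref{lemma:3} with an $r$-fold tensor power of degree-zero chain maps, which commutes with the cyclic shift. For naturality in $X$: neither the chosen straightening nor the maps $\Psi^n$ and $f$ depend on $X$, the assignment $X\mapsto\rA(X)$ and the functor tensor products over $\asimplex$ are functorial, and $\Delta$, $\alpha$, $\beta$, $\gamma$ are natural; hence $\mu$ is a natural transformation on augmented semi-simplicial objects in $\cC$. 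That $\mu$ is a \emph{connected} $r$-diagonal requires nothing further: its source being $\rWd(r)\hotimes\rchains(X)$ forces $\power^i=0$ for $i<0$ automatically.

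The real obstacle is not in this theorem but in the delegated step: verifying the coherence relation \eqref{it:1a}, equivalently constructing the map $f$ of \cref{cons:1}. That is where the Poincar\'e duality algebra structure on $\chains(\asimplex^n)$ and the barycentric and pair subdivisions of the simplex enter, and it is the only part of the program that amounts to more than bookkeeping with signs and functor tensor products.
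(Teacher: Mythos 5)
Your proposal is correct and follows the paper's own route: the paper delegates the construction of $\Psi$ to the chain of results \cref{prop:omegarm}, \cref{lemma:omegar}, \cref{prop:chain,prop:condition}, \cref{cons:1}, \cref{prop:assemblage}, \cref{lemma:straightening}, and the written proof of the theorem itself verifies only naturality, exactly the check you perform. One small remark: you are right to insist on a straightening \emph{with duality} (needed for \cref{prop:assemblage} to apply), which the statement of this theorem in the body of the paper accidentally omits but the introduction's \cref{thm:main} includes.
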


\begin{proof}
	To check naturality, suppose $f \colon X \to Y$ is a map of augmented semi-simplicial objects. If $\tau\in \chains[n](X) = \abel(X)_n$, then, we first have that
	\[
		L(\Delta)\circ L(f)(\tau) = L(\Delta\circ f)(\tau) = L(f^{\ot r}\circ \Delta)(\tau) = L(f)^{\ot r}\circ L(\Delta)(\tau).
	\]
	therefore, writing $\Delta$ for $L(\Delta)$ as in the previous proposition, we have
	\begin{align*}
	\mu(e^\dd_{-q} \ot f_*(\vec{\tau}))
		&=	\alpha\circ\beta\circ\gamma\circ \Psi(e_{-q}^\vee \ot f_*(\vec{\tau}))
		\\
		&= \alpha\circ\beta\circ\gamma\circ(\Psiom^n)^\vee(e_{-q}^\vee) \ot \Delta\circ L(f)(\tau))
		\\
		&= \alpha\circ\beta\circ\gamma\circ(\Psiom^n)^\vee(e_{-q}^\vee) \ot L(f)^{\ot r}(\Delta(\tau))
		\\
		&= L(f)^{\ot r}(\alpha\circ\beta\circ\gamma\circ(\Psiom^n)^\vee(e_{-q}^\vee) \ot \Delta(\tau))
		\\
		&= L(f)^{\ot r}(\mu(e^\dd_{-q} \ot \vec{\tau})).
	\end{align*}
	In the penultimate equality we use that $f$ commutes with the face maps.
\end{proof}

\begin{example}\label{ex:omegarn} If $r=3$ and $n=7$, using Examples \ref{example:omegar} and \ref{example:psi3}:
	\[
		\Psiom_6^7((0,0,1,3,4,6),(0,1,2,0,2,1)) = -e_6-\rho^3e_6.
	\]
	Therefore, the coefficient of $((0,0,1,3,4,6),(0,1,2,0,2,1))$ in $\psi(e_6 \ot \tau)$ is $-1$.
 If $r=3$ and $n=6$, using Examples \ref{example:omegar} and \ref{example:psi3}:
	\[
		\Psiom_6^6((0,0,1,3,4,6),(0,1,2,0,2,1)) = -e_6-\rho^3e_6.
	\]
	Therefore, the coefficient of $((0,0,1,3,4,6),(0,1,2,0,2,1))$ in $\psi(e_6 \ot \tau)$ is $-1$.
 If $r=5$ and $n=2$, using Examples \ref{example:omegar} and \ref{example:psi5}:
	\[
		\Psiom_7^2((0,1,1,1,2,2,2),(1,2,3,0,4,1,3)) = 2^2e_7.
	\]
Therefore, the coefficient of $((0,1,1,1,2,2,2),(1,2,3,0,4,1,3))$ in $\psi(e_7 \ot \tau)$ is $2^2$.
\end{example}

\begin{example}\label{ex:omegarnfinal} If $r=3$ and $n=7$, using Examples \ref{example:alpha}, \ref{example:beta}, \ref{example:gamma} and \ref{ex:omegarn} for the computation of $\alpha,\beta,\gamma$ and $\psi$:
	\begin{itemize}
		\item If $r = 3$ and $\tau = [0,1,2,3,4,5,6,7]$, the coefficient of $\tau \ot [1,2,3,4,5] \ot [2,5,6,7]$ in $\mu(e_{-6} \ot \tau)$ is $-1$.
		\item If $r = 3$ and $\tau = [0,2,3,4,5,6,9]$, the coefficient of $\tau \ot [2,3,4,5,6] \ot [3,6,9]$ in $\mu(e_{-6} \ot \tau)$ is $-1$.
		\item If $r = 5$ and $\tau = [0,1,2]$, the coefficient of $[0,2] \ot [0] \ot [0,1,2] \ot \emptyset \ot [0,1]$ in $\mu(e_{-7} \ot \tau)$ is $2^2$.
	\end{itemize}
\end{example}

\subsection{Another bar resolution of \texorpdfstring{$\Cyc_r$}{the cyclic group}}

After identifying the vertices of the standard augmented simplex $\asimplex^{r-1}$ with $\{0,1,\dots,r-1\}$, the agumented simplex carries an action of $\Cyc_r$ that is free away from the empty simplex and the top simplex. Define $\Om(r)$ as the quotient of $\chains(\asimplex^{r-1}) \ot \chains(\asimplex^{r-1}) \ot \dots$ by the relation $(A_0 \ot \dots \ot A_{j-1} \ot \emptyset \ot A_j \ot \dots) \sim (A_0 \ot \dots \ot A_{j-1} \ot A_j \ot \dots)$. Specifically, it is generated on degree $m>0$ by tuples $A = A_0\barra A_1\barra\dots \barra A_k$ with each $A_j$ a non-empty generator of $\chains(\asimplex^{r-1})$ and $\sum_j |A_j| = m$. In degree $0$ it is generated by the empty sequence, which may be denoted $\emptyset$ or $1$, the unit of the ring $R$. Each $A_j$ will be called a \emph{piece} of $A$ and if $A_j$ is of length $r$, it will be called \emph{full piece}. A tuple $A$ as above will be called a \emph{pieced word}. Its differential is $\sum_{j} (-1)^j \diff_j A$, where $\diff_jA$ is the result of removing the $j$th element of $A_0\cup \dots\cup A_k$ from $A$.

Let $\Om(r)^{\nf} \subset \Om(r)$ be the chain subcomplex generated by pieced words without full pieces. Let $\Om(r)^{\f} \subset \Om(r)$ be the chain subcomplex generated by pieced words with at most one full piece. The differential on $\Om(r)^{\f}$ decomposes as $\partial^{\nf}+\partial^{\f}$, where $\partial^{\nf}$ consists on summands without full pieces and $\partial^{\f}$ on summands with a single full piece.

Let $\DDD \colon \Om(r)^{\f} \to \Om(r)^{\nf}$ be the chain map that sends a pieced word without full pieces to zero and a pieced word $A_0\barra\dots\barra A_k$ of degree $m$ with a full piece piece $A_j$ to
\[
\DDD(A_0\barra \dots\barra A_k) = (-1)^{r\left(m+\sum_{k<j} |A_k|\right)} A_0\barra \dots\barra \hat{A}_{j}\barra \rho A_{j+1}\barra \dots\barra \rho A_k
\]
If $(U,A)$ is a generator of $\Om(r,n)$, then the word $A$ has a canonical piece decomposition: a subsequence $(a_i,\dots,a_{i+k})$ of $A$ is a piece if and only if $u_{i-1}<u_i =\dots= u_{i+k}<u_{i+k+1}$. For each $n \geq 0$, there is a chain map
\begin{equation}\label{eq:eta}
    \eta^n \colon \Om(r,n) \to \Om(r)
\end{equation}
that sends a pair $(U,A)$ of degree $m$ to the word $(-1)^{m(n+1)}A$ with its canonical piece decomposition. Additionally, it satisfies the following equations:
\begin{align}\label{eq:lambda2}
\eta^n\circ \partial^{\nf} &= \partial^{\nf}\circ \eta^n
\\
\label{eq:lambda0}
	\eta^{n-1}\left(\sum_i (-1)^{ri}\d_i(U,A)\right) &= \DDD\circ \eta^n(U,A).
\end{align}

\begin{example}\label{example:omegar} To lighten the notation, we represent a piece $A_j = [a_0,\dots,a_{m-1}]$ as $a_0\dots a_{m-1}$: If $r=3$ and $n=7$,
	\[
		\eta^7((0,0,1,3,4,6),(0,1,2,0,2,1)) = (-1)^{6\cdot 8}(01\barra 2\barra 0\barra 2\barra 1).
	\]
	If $r=3$ and $n=6$,
	\[
		\eta^6((0,0,1,3,4,6),(0,1,2,0,2,1)) = (-1)^{6\cdot 7}(01\barra 2\barra 0\barra 2\barra 1).
	\]
	If $r=5$ and $n=2$:
	\[
		\eta^2((0,1,1,1,2,2,2),(1,2,3,0,4,1,3)) = (-1)^{7\cdot 3} 2^{}(1\barra 230\barra 413).
	\]
\end{example}

\begin{lemma}\label{lemma:omegar}
	Let $r$ be odd, and suppose one is given a $\Cyc_r$-equivariant chain map
	\begin{align*}
		\Psiom \colon \Om(r)^{\nf}& \lra \rW(r)
	\end{align*}
	such that if $A$ is a generator of $\Om(r)^{\f}$ with a full piece,
	\begin{equation}\label{eq:lambda}
		(\tilde{r}!)\Psiom_{q-1}(\partial^{\nf} (A)) = (-1)^q\theta_{1-r}\circ\Psiom_{q-r}\circ\DDD(A).
	\end{equation}
	Then the family of maps $\Psiom^n = (\tilde{r}!)^{n+1}\Psiom\circ \eta^n$ satisfies the condition of Proposition \ref{prop:omegarm}.
\end{lemma}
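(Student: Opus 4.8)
The plan is to show that the family $\Psiom^n := (\tilde{r}!)^{n+1}\,\Psiom\circ\eta^n$ meets the hypotheses of \cref{prop:omegarm} --- that each $\Psiom^n$ is a $\Cyc_r$-equivariant homomorphism $\Om(r,n)^{\nf}\to\rW(r)$ and that the family satisfies equation \eqref{it:1a} --- by a direct substitution from the hypothesis \eqref{eq:lambda} together with the two structural identities \eqref{eq:lambda2} and \eqref{eq:lambda0} already recorded for the maps $\eta^n$. First I would check well-definedness: $\eta^n$ carries $\Om(r,n)^{\nf}$ into $\Om(r)^{\nf}$, because in a nonzero generator $(U,A)$ of $\Om(r,n)$ the entries of $A$ lying in any maximal constant run of $U$ are pairwise distinct, so such a run has length at most $r$, and $\eta^n(U,A)$ is a sign times the word $A$ equipped with its canonical piece decomposition whose pieces are exactly those runs; hence $(U,A)$ has no full piece precisely when $\eta^n(U,A)$ has no piece of length $r$. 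Equivariance is immediate: $\rho$ acts on $(U,A)$ only through $A$ while the canonical piece decomposition depends only on $U$, so $\eta^n$ is $\Cyc_r$-equivariant; $\Psiom$ is equivariant by hypothesis, and the scalar $(\tilde{r}!)^{n+1}$ is central.

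The core of the argument is then a computation to be carried out for a generator $(U,A)\in\Om[q](r,n)$ with a single full piece, for which $\partial^{\nf}(U,A)$ lies in $\Om[q-1](r,n)^{\nf}$ and $\eta^n(U,A)$ is a sign times a generator of $\Om(r)^{\f}$ of degree $q$ with exactly one full piece, so that \eqref{eq:lambda} applies to it. Using successively the definition of $\Psiom^n$, the identity $\eta^n\circ\partial^{\nf}=\partial^{\nf}\circ\eta^n$ of \eqref{eq:lambda2}, the hypothesis \eqref{eq:lambda}, the $R$-linearity of $\theta_{1-r}$, the identity $\DDD\circ\eta^n(U,A)=\eta^{n-1}\bigl(\sum_i(-1)^{ri}\d_i(U,A)\bigr)$ of \eqref{eq:lambda0}, the equality $(-1)^{ri}=(-1)^i$ valid since $r$ is odd, and finally the relation $\Psiom^{n-1}=(\tilde{r}!)^{n}\,\Psiom\circ\eta^{n-1}$, one obtains
\begin{align*}
	\Psiom^n_{q-1}(\partial^{\nf}(U,A))
	&= (\tilde{r}!)^{n+1}\,\Psiom_{q-1}\bigl(\partial^{\nf}(\eta^n(U,A))\bigr) \\
	&= (\tilde{r}!)^{n}\,(-1)^q\,\theta_{1-r}\circ\Psiom_{q-r}\circ\DDD\bigl(\eta^n(U,A)\bigr) \\
	&= (-1)^q\,\theta_{1-r}\Bigl((\tilde{r}!)^{n}\,\Psiom_{q-r}\bigl(\eta^{n-1}(\sum_i(-1)^{i}\d_i(U,A))\bigr)\Bigr) \\
	&= (-1)^q\,\theta_{1-r}\circ\Psiom^{n-1}_{q-r}\Bigl(\sum_i(-1)^{i}\d_i(U,A)\Bigr),
\end{align*}
which is precisely \eqref{it:1a}.

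Since the computation is a substitution, the substance of the lemma is already contained in the properties of $\eta^n$ that it invokes, and the one place where genuine care is needed is the bookkeeping of the powers of $\tilde{r}!$ and of the signs. Concretely, one must observe that the single factor of $\tilde{r}!$ absorbed by \eqref{eq:lambda} leaves exactly $(\tilde{r}!)^{n}=(\tilde{r}!)^{(n-1)+1}$, which is what reconstitutes $\Psiom^{n-1}$ out of $\Psiom\circ\eta^{n-1}$; that the sign $(-1)^q$ and the operator $\theta_{1-r}$ occurring in \eqref{eq:lambda} coincide verbatim with those in the target equation \eqref{it:1a}, so no extra sign is produced; and that the arguments $\partial^{\nf}(U,A)$ and $\sum_i(-1)^i\d_i(U,A)$ on which $\Psiom^n$ and $\Psiom^{n-1}$ act do lie in the relevant $\nf$-subcomplexes --- the latter because $\d_i(U,A)$ vanishes unless $i$ is the common $u$-value of the full piece, in which case it simply deletes that piece.
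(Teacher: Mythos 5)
Your proof is correct and follows essentially the same route as the paper's: it substitutes the definition $\Psiom^n = (\tilde{r}!)^{n+1}\Psiom\circ\eta^n$, then chains together \eqref{eq:lambda2}, \eqref{eq:lambda}, and \eqref{eq:lambda0}, using that $(-1)^{ri}=(-1)^i$ for $r$ odd. The only difference is that you run the chain of equalities from left to right (the paper runs it from right to left), and you supplement it with a useful preliminary check of well-definedness and $\Cyc_r$-equivariance that the paper leaves implicit.
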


\begin{proof} If $(U,A)\in \Om[q](r,n)$ has a full piece,
\begin{align*}
	\theta_{1-r}\circ \Psiom_{q-r}^{n-1}&\left(\sum_i (-1)^{ri} \d_i(U,A)\right) =
	\\
	&= (\tilde{r}!)^{n}\theta_{1-r}\circ \Psiom_{q-r}\circ \eta^{n-1}\left(\sum_i (-1)^{ri} \d_i(U,A)\right)
	\\
	&\overset{\eqref{eq:lambda0}}{=} (\tilde{r}!)^{n}\theta_{1-r}\circ \Psiom_{q-r}\circ \DDD\circ \eta^{n}(U,A)
	\\
	&\overset{\eqref{eq:lambda}}{=} (-1)^{q}(\tilde{r}!)^{n} \cdot \tilde{r}!\Psiom_{q-1}\circ \partial^{\nf}\circ \eta^{n}(U,A)
	\\
		&\overset{\eqref{eq:lambda2}}{=} (-1)^{q-1}(\tilde{r}!)^{n+1}\Psiom_{q-1}\circ \eta^n\circ \partial^{\nf}(U,A)
	\\
	&= \Psiom^{n}_{q-1}\circ \partial^{\nf}(U,A).\qedhere
\end{align*}
\end{proof}

\begin{remark}
	A particular homomorphism satisfying the hypotheses of Lemma \ref{lemma:omegar} will be found in the next section. Though it is defined with coefficients in $\bZ[\frac{1}{\tilde{r}!}]$, the maps $\{\Psiom^m\}$ obtained in the lemma are well-defined with integer coefficients.
\end{remark}

\section{Maps between resolutions of the cyclic group}\label{s:resolutions}

Except for the last part of this section, we assume that $r$ is odd. In the first part of this section we construct a homomorphism from $\chains(\asimplex^{r-1})$ to $\rW(r)$. In the second and third parts we extend this homomorphism to a homomorphism $\Omega_*(r)^{\nf} \to \rW(r)$ satisfying the conditions of Lemma \ref{lemma:omegar}, under the assumption that $r$ is prime. In \cref{ss:milnor} we explain how the above homomorphism yields a chain map from $\chains(\EC_r)$ to $\rW(r)$ when $r$ is odd, and in \cref{ss:even} we discuss the case $r=2$. Recall that $\tilde{r} = \lfloor \frac{r-1}{2}\rfloor$.

\subsection{From the simplex to the minimal resolution}

Through the isomorphism $\Cyc_r \cong \{0,1,\dots,r-1\}$ we endow the latter with a group structure. Given two numbers $a,b\in \{0,1,\dots,r-1\}$, the cyclic difference $b-a$ is computed using this group structure. Given a sequence $(a_0,\dots,a_{q-1}) \subset \{0,1,\dots,r-1\}$, write $a_0\prec a_1\prec \dots\prec a_{q-1}$ if there are integers $\bar{a}_1,\dots,\bar{a}_{q-1}$ that differ from $a_1,\dots,a_{q-1}$ by a multiple of $r$ such that $a_0<\bar{a}_1<\dots<\bar{a}_{q-1}<a_0+r$. In that case, we say that the sequence $(a_0,\dots,a_{q-1})$ is \emph{cyclically ordered}. Define
\begin{align*}
	\phi_{e}(a,b) &= \begin{cases}
		\displaystyle \sum_{i=1}^{\frac{b-a}{2}} \rho^{a+2i} & \text{if $b-a$ is even} \\
		0 & \text{otherwise,}
	\end{cases}
	\\
	\phi_{o}(a,b) &= \begin{cases}
		\displaystyle \sum_{i=1}^{\frac{b-a-1}{2}} \rho^{a+2i+1} & \text{if $b-a$ is odd} \\
		0 & \text{otherwise,}
	\end{cases}
\end{align*}
Let $A= a_0\prec a_1\prec\dots\prec a_{q-1}$ be a cyclically ordered sequence in $\{0,1,\dots,r-1\}$. Say that $a_i$ is \emph{even (resp. \emph{odd}) in the sequence} if $a_{i+1}-a_i$ is an even number (resp. an odd number).
Define the following elements of $\rW(r)$:
\begin{align*}
	\Phi_e(a_0,\dots,a_{q-1}) &= \begin{cases}
		(-1)^{j+1}\phi_e(a_j,a_{j+1})\cdot e_q & \text{if $A$ has a single even entry $a_j$} \\
		0 & \text{otherwise}
	\end{cases}
	\\
	\Phi_o(a_0,\dots,a_{q-1}) &= \begin{cases}
		\sum_{j=0}^{q-1} \phi_o(a_j,a_{j+1})\cdot e_q & \text{if $A$ has no even entries} \\
		0 & \text{otherwise.}
	\end{cases}
\end{align*}

A \emph{cyclic representative} of a generator $[a_0,\dots,a_{q-1}]\in \chains{q}(\partial \asimplex^{r-1})$ is a cyclically ordered sequence $(a_{\perm(0)},\dots,a_{\perm(q-1)})$. Removing an element from the sequence keeps the property of being cyclically ordered. The assignments $\Phi_e(A)$ and $\Phi_o(A)$ induce well defined assigments $\Phi_e[a_0,\dots,a_{q-1}]$ and $\Phi_o[a_0,\dots,a_{q-1}]$ on the generators of $\chains(\asimplex^{r-1})$. In what follows we work always with these cyclic representatives. Let $\varphi(q) = \lfloor\frac{r-q-1}{2}\rfloor$.
\begin{theorem} The linear homomorphism with $\bZ[\frac{1}{\tilde{r}!}]$-coefficients
	\begin{align*}
		\Phi \colon \chains(\partial \asimplex^{r-1})& \lra \rW(r)
		&
		\Phi(A) = \begin{cases}
			e_0 & \text{if $q=0$ and $A = \emptyset$}\\
			\frac{\varphi(q)!}{\tilde{r}!}\Phi_e(A) & \text{if $q$ is even and positive} \\
			\frac{\varphi(q)!}{\tilde{r}!}\Phi_o(A) & \text{if $q$ is odd}
		\end{cases}
	\end{align*}
	is a chain map.
\end{theorem}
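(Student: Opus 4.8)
The plan is to verify $\partial\Phi=\Phi\partial$ directly on each generator $A=[a_0,\dots,a_{q-1}]$ of $\chains(\partial\asimplex^{r-1})$. Since, as recalled just before the statement, $\Phi_e$ and $\Phi_o$ descend to generators, we may work with any cyclically ordered representative $a_0\prec a_1\prec\dots\prec a_{q-1}$, and we write $g_i=a_{i+1}-a_i$ for the gaps, read as the positive integers supplied by that representative (so $\sum_i g_i=r$, and for $q=1$ the single gap equals $r$). Two ingredients are needed. The first is the differential of $\rW(r)$: $\partial e_1=e_0$, $\partial e_q=(\rho-1)e_{q-1}$ for even $q$, and $\partial e_q=Ne_{q-1}$ for odd $q\ge 3$. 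The second is the elementary bookkeeping that $a_i$ is even in the sequence exactly when $g_i$ is even; that the number $c$ of even entries satisfies $c\equiv q+1\pmod 2$, since $r$ is odd; and that deleting $a_i$ merges $g_{i-1},g_i$ into $g_{i-1}+g_i$, so $c$ changes by $+1$ if both $g_{i-1},g_i$ are odd and by $-1$ otherwise.

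I would first dispose of the cases where both sides vanish for trivial reasons. For $q=0$ there is nothing to check. For $q=1$ the gap is $r$, odd, so $\Phi(A)=\tfrac{1}{\tilde{r}}\phi_o(a_0,a_0+r)e_1$ is $\tfrac{1}{\tilde{r}}$ times a sum of $\tilde{r}$ powers of $\rho$; applying $\partial$, and using that $\partial e_1=e_0$ is fixed by $\Cyc_r$, gives $\partial\Phi(A)=\tfrac{1}{\tilde{r}}\cdot\tilde{r}\cdot e_0=e_0=\Phi(\emptyset)=\Phi(\partial A)$. For $q\ge 2$ with $\Phi(A)=0$, i.e. $c\ge 1$: the parity constraint forces $c\ge 3$ when $q$ is even and $c\ge 2$ when $q$ is odd, and in every such case other than $q$ odd with $c=2$, a single deletion cannot bring $c$ to the value required for $\Phi(A\setminus a_i)\ne 0$ (namely $0$ if $q$ is even and $1$ if $q$ is odd), so $\Phi(\partial A)=0=\partial\Phi(A)$. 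When $q$ is odd and $c=2$, only deletions of an endpoint of one of the two even gaps contribute, and these cancel: for non-adjacent even gaps the two deletions at the ends of each gap produce equal summands carrying opposite $\partial$-signs, while for adjacent even gaps $g_j,g_{j+1}$ the three surviving summands cancel by the additivity $\phi_e(a_j,a_{j+2})=\phi_e(a_j,a_{j+1})+\phi_e(a_{j+1},a_{j+2})$, valid since $g_j$ and $g_{j+1}$ are both even.

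Two generic cases remain. If $q\ge 3$ is odd with $c=0$ (all gaps odd), then $\Phi(A)=\tfrac{\varphi(q)!}{\tilde{r}!}\big(\sum_i\phi_o(a_i,a_{i+1})\big)e_q$, and since $\phi_o(a,b)N=\tfrac{b-a-1}{2}N$ one obtains $\partial\Phi(A)=\tfrac{\varphi(q)!}{\tilde{r}!}\cdot\tfrac{r-q}{2}\,Ne_{q-1}$. On the other side each face $A\setminus a_i$ has exactly one even entry, namely $a_{i-1}$, and a check of its position shows that the sign $(-1)^i$ coming from $\partial A$ cancels the sign built into $\Phi_e(A\setminus a_i)$, so $\Phi(\partial A)=\tfrac{\varphi(q-1)!}{\tilde{r}!}\big(\sum_i\phi_e(a_{i-1},a_{i+1})\big)e_{q-1}$. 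Because $\varphi(q)=\varphi(q-1)-1$ for odd $q$, the two sides agree if and only if $\sum_i\phi_e(a_{i-1},a_{i+1})=N$, which I would prove by the count that a point at forward-distance $\delta$ inside the gap $g_m$ lies in $\phi_e(a_{i-1},a_{i+1})$ exactly for $i=m+1$ (when $\delta$ is even) and for $i=m$ (when $\delta\equiv g_{m-1}\pmod 2$); since every $g_{m-1}$ is odd, exactly one of these occurs, so each element of $\Cyc_r$ is counted once.

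If $q$ is even with $c=1$ and unique even entry $a_j$, then $\Phi(A)=\tfrac{\varphi(q)!}{\tilde{r}!}(-1)^{j+1}\phi_e(a_j,a_{j+1})e_q$, and the only faces with nonzero image are $A\setminus a_j$ and $A\setminus a_{j+1}$, since deleting any other $a_i$ merges two odd gaps and raises $c$ to $2$. Using $\varphi(q-1)=\varphi(q)$ for even $q$, equality of the two sides reduces to $\Phi_o(A\setminus a_j)-\Phi_o(A\setminus a_{j+1})=-\phi_e(a_j,a_{j+1})(\rho-1)$. Here the two gap-sums differ only through $\phi_o(a_{j-1},a_{j+1})-\phi_o(a_{j-1},a_j)$ and $\phi_o(a_{j+1},a_{j+2})-\phi_o(a_j,a_{j+2})$, which are the prefix and suffix differences of $\phi_o$ along the even gap; they sum to $(\rho^{a_j+2}+\dots+\rho^{a_{j+1}})-(\rho^{a_j+3}+\dots+\rho^{a_{j+1}+1})$, exactly the expansion of $-\phi_e(a_j,a_{j+1})(\rho-1)$, and the degenerate configuration $q=2$ is included once the gap from a point to itself is read as the full loop of length $r$. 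I expect the main obstacle to lie in this combinatorial core, together with keeping the three sign sources consistent—the simplicial $(-1)^i$, the $(-1)^{j+1}$ inside $\Phi_e$, and the right-suspension conventions—so that the prefactors $\varphi(q)!/\tilde{r}!$ cancel on the nose; the wrap-around gap, especially for $q=1$ and $q=2$, is where this care is most delicate.
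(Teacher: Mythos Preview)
Your proof is correct and follows essentially the same approach as the paper's own argument: a direct case analysis according to the parity of $q$ and the number $c$ of even entries, reducing the non-trivial cases to the identities $\phi_o(a_{j-1},a_{j+1})=\phi_o(a_{j-1},a_j)+\phi_e(a_j,a_{j+1})$ and $\phi_o(a_j,a_{j+2})=\rho\,\phi_e(a_j,a_{j+1})+\phi_o(a_{j+1},a_{j+2})$. Your treatment is in fact slightly more careful than the paper's in that you isolate the boundary cases $q=0,1,2$ explicitly (the paper's computation ``$\partial\Phi_o(A)=\tfrac{r-q}{2}N$'' tacitly assumes $q\ge 3$) and you give an explicit counting argument for $\sum_i\phi_e(a_{i-1},a_{i+1})=N$, which the paper merely asserts.
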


\begin{remark}
	If $r=3$, then $\bZ[\frac{1}{\tilde{r}!}] = \bZ$ and if $r$ is prime, then $\mathbb{F}_r$ is a $\bZ[\frac{1}{\tilde{r}!}]$-algebra, hence these coefficients are also allowed.
\end{remark}

\begin{proof} In order to simplify the notation, we will omit the generator $e_q$ in the formulas below.
	Suppose first that $A = [a_0,\dots,a_{q-1}]$ has even length, in which case $\varphi(q) = \varphi(q-1)$. Since $A$ has even length, it must have at least one even entry. If $A$ has more than one even entry, then all sequences in its boundary have at least one even entry, and therefore $\Phi_o(\partial A) = 0 = \partial\Phi_e(A)$. If $A$ has a single even entry $a_j$, then
	\begin{align*}
		\partial \Phi_e(A) &= (-1)^{j+1}\partial \phi_e(a_j,a_{j+1}) = (-1)^{j+1}\left(\rho\phi_e(a_j,a_{j+1}
		) - \phi_e(a_j,a_{j+1})\right).
	\end{align*}
	On the other hand, since $[a_0,\dots,\hat{a}_i,\dots,a_{q-1}]$ will have two even entries unless $i=j$ or $i=j+1$, we have that
	\begin{align*}
		\Phi_o(\partial A)
		&= (-1)^{j}\left(\Phi_o [a_0,\dots,\hat{a}_j,\dots,a_{q-1}]-\Phi_o [a_0,\dots,\hat{a}_{j+1},\dots,a_{q-1}]\right)
	\end{align*}
	Now, using that $\phi_o(a_{j-1},a_{j+1}) = \phi_o(a_{j-1},a_j) + \phi_e(a_j,a_{j+1})$, the first summand equals
	\[
	\phi_o(a_{j-1},a_j) + \phi_e(a_j,a_{j+1})+\sum_{i\neq j-1,j} \phi_o(a_i,a_{i+1}).
	\]
	Using that $\phi_o(a_{j},a_{j+2}) = \phi_e(a_j+1,a_{j+1}+1)+\phi_o(a_{j+1},a_{j+2})$, the second summand equals
	\[
	\phi_e(a_j+1,a_{j+1}+1) + \phi_o(a_{j+1},a_{j+2})+\sum_{i\neq j,j+1} \phi_o(a_i,a_{i+1}).
	\]
	Therefore, most terms cancel and we are left with the value of $\partial \Phi_e[a_0,\dots,a_{q-1}]$.

	Suppose now that $A = [a_0,\dots,a_{q-1}]$ has odd length, in which case $\tilde{r}-\tilde{q}-1 = \tilde{r}-\widetilde{q-1} $. If $A$ has more than two even entries, then $[a_0,\dots,\hat{a}_i,\dots,a_{q-1}]$ has more than one even entry, hence $\Phi_e([a_0,\dots,\hat{a}_i,\dots,a_{q-1}]) = 0$ for all $i$, therefore $\Phi_e(\partial A) = 0 = \partial \Phi_o(A)$. If $A$ has exactly two even entries $a_j,a_k$, then we have two cases: if these entries are not consecutive and $j<k$, then the nontrivial summands in $ \Phi_e(\partial A)$ are
	\begin{align*}
		(-1)^j\Phi_e[a_0,\dots,\hat{a}_j,\dots,a_{q-1}] &= (-1)^j(-1)^{k}\phi_e(a_k,a_{k+1}) \\
		(-1)^{j+1}\Phi_e[a_0,\dots,\hat{a}_{j+1},\dots,a_{q-1}] &= (-1)^{j+1}(-1)^{k}\phi_e(a_k,a_{k+1})
		\\
		(-1)^{k}\Phi_e[a_0,\dots,\hat{a}_k,\dots,a_{q-1}] &= (-1)^k(-1)^{j+1}\phi_e(a_j,a_{j+1}) \\
		(-1)^{k+1}\Phi_e[a_0,\dots,\hat{a}_{k+1},\dots,a_{q-1}] &= (-1)^{k+1}(-1)^{j+1}\phi_e(a_j,a_{j+1})
	\end{align*}
	which cancel in pairs. If the entries are consecutive, then $k=j+1$ and the nontrivial summands in $\Phi_e(\partial A)$ are
	\begin{align*}
		(-1)^j\Phi_e[a_0,\dots,\hat{a}_j,\dots,a_{q-1}] &= (-1)^{j}(-1)^{j+1}\phi_e(a_{j+1},a_{j+2}) \\
		(-1)^{j+1}\Phi_e[a_0,\dots,\hat{a}_{j+1},\dots,a_{q-1}] &= (-1)^{j+1}(-1)^{j+1}\phi_e(a_j,a_{j+2}) \\
		(-1)^{j+2}\Phi_e[a_0,\dots,\hat{a}_{j+2},\dots,a_{q-1}] &= (-1)^{j+2}(-1)^{j+1}\phi_e(a_j,a_{j+1})
	\end{align*}
	and the middle summand cancels with the other two, so $\Phi_e(\partial (A)) = 0 = \partial(\Phi_o(A))$.

	If $A$ has no even entry, then
	\[
	\partial \Phi_o[a_0,\dots,a_{q-1}] = N\left(\sum_{i=0}^{q-1} \phi_o(a_i,a_{i+1})\right) = \frac{r-q}{2}N
	\]
	On the other hand,
	\[
	\Phi_e(\partial A)
	= \sum_{i=0}^{q-1} \phi_e(a_i,a_{i+2})
	= N
	\]
	This completes the proof, since $\frac{r-q}{2} = \frac{\varphi(q-1)!}{\varphi(q)!}$.
\end{proof}

\begin{example}\label{example:Phi}
If $r=3$, we have:
\begin{align*}
\Phi([0]) &= e_1
&
\Phi([1,2]) &= \rho e_2
\end{align*}
If $r=5$, we have:
\begin{align*}
\Phi([0,1,2]) &= 2^{-1}e_3
&
\Phi([0,1]) &= 2^{-1}(e_2 + \rho^3e_2)
&
\Phi([0,3]) &= 2^{-1}e_2.
\end{align*}
\end{example}

\begin{remark}\label{remark:phidual}
	For each $0\leq q<r-1$, let $L_q \subset (\partial \asimplex^{r-1})_q$ be the set of those simplices $[a_0,\dots, a_{q-1}]$ with increasing entries (in the total order) such that $a_{2i}$ is even and different from $r-1$, and $a_{2i+1}$ is odd. Then the linear dual of $\Phi$ has the following expression
	\begin{equation}\label{eq:111}
		\Phi^\vee(e_{-q}) = \frac{\varphi(q)!}{\tilde{r}!}\sum_{A\in L_q} A^\vee.
	\end{equation}
\end{remark}

\subsection{From \texorpdfstring{$\Omega_*(r)$}{Omega(r)} to the minimal resolution}\label{ss:mapf}

The \emph{pivotal piece} of a pieced word $A_0\barra \dots\barra A_k$ is the leftmost piece $A_j$ such that $A_{j+1}*\dots*A_k$ has degree smaller than $r$ (i.e., $A_{j+1}\cup\dots\cup A_k$ has less than $r$ entries).

\begin{example}
	If $r=5$, the pivotal piece of the pieced word $A=01|24|013|12|4$ is $A_1 = 013$, while $A_2 = 12|4$ and $\hat{A} = 01|24$. If $r=3$, the pivotal piece of the pieced word $01|20|1|02|1|20|12$ is $20$, while $A_2 = 12$ and $\hat{A} = 01|20|1|02|1$.
\end{example}

The join homomorphism $\chains(\asimplex^{r-1}\join \asimplex^{r-1}) \to \chains(\asimplex^{r-1})$ of Definition \ref{d:join_product} induces an endomorphism
\[
\kappa \colon \Omega_*(r)^{\nf} \lra \Omega_*(r)^{\nf}
\]
that sends a pieced word $A_0\barra \dots\barra A_k$ with pivotal piece $A_j$ to the pieced word $(-1)^{\lambda(A_{j+1},\dots,A_k)}A_0\barra \dots\barra A_j\barra A_{j+1}\join\dots\join A_k$, where the sign is that of the permutation that orders the entries of $A_{j+1},\dots,A_k$.

The cyclic group $\Cyc_r$ acts on $\partial \asimplex^{r-1}$ by permuting its vertices forwards. Let $\sigma = [0,1,\dots,r-1]$ be the top simplex of $\asimplex^{r-1}$ and let
\begin{align*}
	\iota_1 \colon \sus{r-1}\chains(\partial \simplex^{r-1})& \lra \chains(\partial\simplex^{r-1}) \ot \chains(\partial\simplex^{r-1})\\
	\iota_2 \colon \sus{r-1}\chains(\partial \simplex^{r-1})& \lra \chains(\partial\simplex^{r-1}) \ot \chains(\partial\simplex^{r-1})
\end{align*}
be the $\Cyc_r$-equivariant chain maps of degree $r-1$ given by
\begin{align*}
	\iota_1(\tau) &= \tau \ot \partial \sigma &
	\iota_2(\tau) &= \partial \sigma \ot \tau
\end{align*}

\begin{construction}\label{cons:1} Let $f \colon \chains(\partial\simplex^{r-1}) \ot \chains(\partial\simplex^{r-1}) \to \sus{r-1}\chains(\partial\asimplex^{r-1})$ be a $\Cyc_r$-equivariant chain map such that
\renewcommand{\theenumi}{\roman{enumi}}
\begin{enumerate}
	\item\label{cond:1} $f\circ \iota_1 = \Id$
	\item\label{cond:2} $f\circ \iota_2 = \rho$.
	\item\label{cond:3} if $\tau_1 \ot \tau_2$ has degree $r$, then $ \partial f(\tau_1 \ot \tau_2) =
	\begin{cases}
		(-1)^{\lambda(\tau_1,\tau_2)}\emptyset & \text{if $\tau_1^c=\tau_2$} \\
		0 & \text{otherwise},
	\end{cases}$
	where $\lambda(\tau_1,\tau_2)$ is the sign of the permutation that orders $\tau_1\cup \tau_2$.
\end{enumerate}
\end{construction}

\begin{remark}\label{remark:3prime}
	Condition \eqref{cond:3} is implied by the following condition:
	\begin{itemize}
		\item[(iii')] If $\tau_1 \ot \tau_2$ has degree $r$, then $f(\tau_1 \ot \tau_2)$ is $(-1)^{\lambda(\tau_1,\tau_2)} v$ for some vertex of $\partial \asimplex^{r-1}$ if $\tau_1\cup\tau_2$ is non-degenerate and $0$ otherwise.
	\end{itemize}
\end{remark}

\begin{proposition} Let $f$ be a chain map as in Construction \ref{cons:1}, satisfying Conditions \eqref{cond:1} and \eqref{cond:3}. Then, there is a chain map
	\[
	S \colon \Omega_*(r)^{\nf} \lra \sus{r-1}\Omega_*(r)^{\nf}
	\]
	that sends a pieced word $A = A_0\barra \dots\barra A_k$ with pivotal piece $A_j$ to the pieced word $ A_0\barra \dots\barra f(A_j \ot \kappa(A_{j+1}\barra \dots\barra A_k))$.
\end{proposition}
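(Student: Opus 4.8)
The plan is to check two things: that the displayed formula gives a well-defined linear map $S\colon\Omega_*(r)^{\nf}\to\sus{r-1}\Omega_*(r)^{\nf}$, and that $\partial S=S\partial$. Well-definedness should be the easy part. Since $f$ takes values in $\sus{r-1}\chains(\partial\asimplex^{r-1})$, whose generators have strictly fewer than $r$ vertices, the piece $f(A_j\ot\kappa(A_{j+1}\barra\dots\barra A_k))$ --- which is $f$ applied to $A_j$ and the total join $A_{j+1}*\dots*A_k$ of the tail, up to the permutation sign built into $\kappa$ --- is never full, so $S(A)\in\Omega_*(r)^{\nf}$; and whenever the empty simplex occurs it is absorbed by the relation $(\dots\barra\emptyset\barra\dots)\sim(\dots\barra\dots)$, so $A_0\barra\dots\barra A_{j-1}\barra\emptyset=A_0\barra\dots\barra A_{j-1}$. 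Degenerate inputs, where $\kappa$ or $f$ evaluates to $0$, cause no problem since $f$, $\kappa$ and the join product are linear.

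For the chain-map identity I would split a generator as $A=\hat A\barra A_j\barra T$ at its pivotal piece, with $\hat A=A_0\barra\dots\barra A_{j-1}$, $T=A_{j+1}\barra\dots\barra A_k$, and $J$ the signed join of the pieces of $T$, so that $S(A)=\hat A\barra f(A_j\ot J)$. Expanding $\partial S(A)$ by the Leibniz rule for pieced words and the fact that $f$ and the join product of \cref{d:join_product} are chain maps gives, up to signs,
\[
\partial S(A)=(\partial\hat A)\barra f(A_j\ot J)\ \pm\ \hat A\barra f((\partial A_j)\ot J)\ \pm\ \hat A\barra f(A_j\ot\partial J),
\]
and I would expand $S(\partial A)=\sum_i(-1)^iS(\diff_i A)$ case by case. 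The combinatorial heart of the argument is the observation that among the joins $A_m*\dots*A_k$ only the one with $m=j$ can have exactly $r$ vertices, and all those with $m<j-1$ have at least $r+1$ vertices --- otherwise the piece just to the right of a tight join would itself qualify, contradicting minimality of $j$. It follows that the pivotal piece of $\diff_iA$ is again $A_j$ in every case \emph{except} the ``critical'' one, where $|A_j|+|J|=r$ and the removed vertex lies in $A_j$ or in $T$, in which case it shifts exactly one step to $A_{j-1}$ (or disappears when $j=0$).

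In the non-critical case the pivotal piece never moves, so grouping the summands of $S(\partial A)$ according to whether the removed vertex lies in $\hat A$, in $A_j$, or in $T$ reproduces the three terms of $\partial S(A)$ one by one (for the $T$-terms using that $\partial J$ is the join of $\partial T$). In the critical case $f(A_j\ot J)$ has degree $1$, so $S(A)=\hat A\barra w$ with $w=f(A_j\ot J)$ a vertex; when $w\neq0$ the simplex $A_j*J$ is nondegenerate on $r$ vertices, hence $J=A_j^c$, and Condition~\eqref{cond:3} gives $\partial w=(-1)^{\lambda(A_j,A_j^c)}\emptyset$, so $\hat A\barra\partial w=\pm\hat A$. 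On the other side each vertex removed from $A_j\cup T$ contributes $A_0\barra\dots\barra A_{j-2}\barra f(A_{j-1}\ot(\sigma\setminus v))$ with $\sigma=A_j*J=[0,\dots,r-1]$, and these sum to $A_0\barra\dots\barra A_{j-2}\barra f(A_{j-1}\ot\partial\sigma)=A_0\barra\dots\barra A_{j-2}\barra f(\iota_1(A_{j-1}))=\hat A$ by Condition~\eqref{cond:1}, while vertices removed from $\hat A$ still give $(\partial\hat A)\barra w$; comparing with $\partial S(A)$ closes the case. When $j=0$ both sides collapse to $f(\partial(A_0\ot J))$ and only the chain-map property of $f$ is needed.

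The main obstacle I anticipate is the sign bookkeeping: one has to verify that the signs from the differential of $\Omega_*(r)$ (the position of the removed vertex), the permutation signs in the definitions of $\kappa$ and of the join, the Koszul signs in the two Leibniz rules, and the sign $(-1)^{\lambda(A_j,A_j^c)}$ of Condition~\eqref{cond:3} all cancel consistently --- in particular the sign in Condition~\eqref{cond:3} must be exactly the one that matches the output of Condition~\eqref{cond:1} in the critical case. The remaining, easier, points are the degenerate subcases, which follow formally from linearity and the chain-map property of $f$, $\kappa$ and the join.
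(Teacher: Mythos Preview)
Your proposal is correct and follows essentially the same route as the paper. Both arguments split at the pivotal piece, distinguish the non-critical case $|A_j|+|J|\geq r+1$ from the critical case $|A_j|+|J|=r$, and in the latter match the term $\hat A\barra\partial f(A_j\ot J)$ against the shifted-pivot contributions via Conditions~\eqref{cond:1} and~\eqref{cond:3}. The paper streamlines by first replacing $A$ with $\kappa(A)$ so that the tail is a single piece $A_{j+1}$; this is equivalent to your device of writing $J$ for the total join of the tail, and makes the sign tracking (which you flag as the main outstanding task) a bit lighter. The paper also handles the degenerate subcase of the critical step explicitly: when $A_j*A_{j+1}$ is degenerate it is $0$ in $\chains(\asimplex^{r-1})$, so both $\partial f(A_j\ot A_{j+1})$ (by~\eqref{cond:3}) and $f(A_{j-1}\ot\partial(A_j*A_{j+1}))$ vanish, which is the content of the ``easier points'' you defer.
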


\begin{proof}
	Replacing the word $A$ by $\kappa(A)$, we may assume that $A_{j+1} = A_k$. Let $\bar{A} = A_0\barra \dots\barra A_{j-1}$ and let $\ell$ be the degree of $\bar{A}$ (i.e., the number of entries of $A$). Then 
	\begin{equation}\label{eq:931}
		\partial S(A) = \partial \bar{A}\mid f(A_{j} \ot A_{j+1}) + (-1)^{\ell}\bar{A}\barra \partial f(A_{j} \ot A_{j+1}).
	\end{equation}
	We distinguish two cases: if $A_j\cup A_{j+1}$ has length at least $r+1$,
	\[S(\partial A) = \partial \bar{A}\barra f(A_j \ot A_{j+1}) + (-1)^{\ell}\bar{A}\barra f(\partial (A_j \ot A_{j+1})),\]
	which equals the previous sum. If $A_j\cup A_{j+1}$ has length $r$, let $\check{A} = A_0\mid \dots\mid A_{j-2}$. Then
	\begin{equation}\label{eq:933}
		S(\partial A) = \partial \bar{A}\barra f(A_j \ot A_{j+1}) + (-1)^\ell \check{A}\barra f(A_{j-1} \ot \partial(A_j*A_{j+1})).
	\end{equation}
	Let $\pm$ be the sign of the permutation that orders $A_j*A_{j+1}$. By condition \eqref{cond:3}, the last summand of \eqref{eq:931} is zero or $\pm(-1)^\ell\bar{A}$ depending on whether $A_j*A_{j+1}$ is degenerate or not, which by condition \eqref{cond:1} is equivalent to $f(A_{j-1},\partial(A_j*A_{j+1}))$ being $0$ or $\pm(-1)^{\ell}A_{j-1}$, and therefore equivalent to the last summand of \eqref{eq:933} being zero or $\pm(-1)^{\ell}\bar{A}$.
\end{proof}

\begin{definition}\label{def:psiom}
	Let $f$ be a chain map as in Construction \ref{cons:1}, satisfying Conditions \eqref{cond:1} and \eqref{cond:3}. Define a homomorphism $\Psi \colon \Omega_*(r)^{\nf} \ot \to \rW(r) \ot \bZ[\frac{1}{\tilde{r}!}]$ recursively as
	\[\Psiom_q(A) = \begin{cases} \Phi_q(\kappa(A)) & \text{if $q\leq r-1$} \\
		\frac{1}{\tilde{r}!}\theta_{r-1}\circ \Psiom_{q-r+1}\circ S(A) & \text{if $q \geq r$,}\end{cases}\]
\end{definition}

\begin{example}\label{example:psi3}
Let $r=3$. We will use the computations of Examples \ref{example:Phi}, \ref{example:f3_1} and \ref{example:f3_2} with the $f$ constructed in \cref{s:assembly}. To lighten the notation, we will write each block $A_j = [a_0,\dots,a_{m-1}]$ as $a_0a_1\dots a_{m-1}$, so $[0,1]$ is written $01$.
	\begin{align*}
		\Psi_6(01\barra 2\barra 0\barra 2\barra 1)
		&= \theta_2\circ \Psi_4\circ S(01\barra 2\barra 0\barra 2\barra 1)
		\\
		&= \theta_2\circ \Psi_4(01\barra 2\barra f([0] \ot [2,1]))
		\\
		&= -\theta_2\circ \Psi_4\circ S(01\barra 2\barra 0)
		\\
		&= -\theta_4\circ \Psi_2(f([0,1] \ot [2,0]))
		\\
		&= -\theta_4\circ \Psi_2(01)
		\\
		&= -\theta_4\circ \Phi_2(01)
		\\
		&= -\theta_4(e_2 + \rho^3e_2)
		\\
		&= -e_6 - \rho^3e_6
	\end{align*}
\end{example}

\begin{example}\label{example:psi5}
Let $r=5$. We will use the computations of Examples \ref{example:Phi} and \ref{example:f5_1} with the $f$ constructed in \cref{s:assembly}. We use the same lightened notation as in the previous example
	\begin{align*}
		\Psi_7(1\barra 230\barra 413)
		&= 2^{-1}\theta_4\circ \Psi_3\circ S(1\barra 230\barra 413)
		\\
		&= 2^{-1}\theta_4\circ \Psi_3(1\barra f([2,3,0] \ot [4,1,3]))
		\\
		&= 2^{-1}\theta_4\circ \Psi_3(1\barra 02)
		\\
		&= 2^{-1}\theta_4\circ \Phi_3([1,0,2])
		\\
		&= -2^{-1}\theta_4(e_3)
		\\
		&= -2^{-1}e_7
	\end{align*}
\end{example}

\begin{proposition}\label{prop:chain}
	$\Psiom$ is a chain map. 
\end{proposition}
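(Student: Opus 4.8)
The plan is to prove $\partial\Psiom=\Psiom\partial$ by strong induction on the homological degree $q$ of a generator $A$ of $\Omega_*(r)^{\nf}$, exploiting that Definition \ref{def:psiom} builds $\Psiom$ out of maps already known to be chain maps. Equivariance under $\Cyc_r$ is automatic, since $\Phi$, $\kappa$, $S$ and $\theta_{r-1}$ are all equivariant, so only the differential needs attention.

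\emph{Base case $q\le r-1$.} Here $\Psiom_q=\Phi_q\circ\kappa$. A generator of total degree $<r$ has no full piece, and indeed no pivotal piece, so $\kappa$ sends it to a sign times the single simplex $A_0\ast\cdots\ast A_k$; on this range $\kappa$ is therefore just the iterated join product, which is a chain map, and $\Phi$ is a chain map by the theorem asserting this. Since $\partial A$ again has degree $<r$, both $\partial(\Phi_q\kappa A)=\Phi_{q-1}(\kappa\partial A)$ and $\Psiom_{q-1}(\partial A)=\Phi_{q-1}(\kappa\partial A)$ agree.

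\emph{Inductive step $q\ge r$.} Now $\Psiom_q=\tfrac1{\tilde r!}\,\theta_{r-1}\circ\Psiom_{q-r+1}\circ S$. Here $S$ is a chain map by the proposition that produces it, $\theta_{r-1}$ is a chain map since $r-1$ is even, and $\Psiom_{q-r+1}$ is a chain map by the induction hypothesis because $q-r+1<q$. Hence
\[
\partial\Psiom_q(A)=\tfrac1{\tilde r!}\,\theta_{r-1}\,\partial\Psiom_{q-r+1}\,S(A)=\tfrac1{\tilde r!}\,\theta_{r-1}\,\Psiom_{q-r}\,\partial S(A)=\tfrac1{\tilde r!}\,\theta_{r-1}\,\Psiom_{q-r}\,S(\partial A),
\]
and for $q\ge r+1$ the right-hand side is, by Definition \ref{def:psiom}, exactly $\Psiom_{q-1}(\partial A)$, which closes the induction there.

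\emph{The seam $q=r$} is the one nonformal step: here $\partial A$ has degree $r-1$, so $\Psiom_{r-1}(\partial A)=\Phi_{r-1}\kappa(\partial A)$ is governed by the base-case clause rather than by the recursion, and one must check that the final expression above, $\tfrac1{\tilde r!}\theta_{r-1}\Psiom_0(S(\partial A))$, equals $\Phi_{r-1}\kappa(\partial A)$. Since $S(\partial A)=\partial S(A)$ is the boundary of the degree-$1$ element $S(A)$, it is a scalar multiple of the empty word; $\Psiom_0$ turns it into the same multiple of $e_0$, and $\theta_{r-1}$ carries $e_0$ to $Ne_{r-1}$. On the right, applying $\kappa$ to the terms of $\partial A$ produces signed facets of $\asimplex^{r-1}$, and summing $\Phi_{r-1}$ over them is again a multiple of $Ne_{r-1}$ --- this is the identity $\Phi(\partial\sigma)=\tfrac1{\tilde r!}Ne_{r-1}$ for the top simplex $\sigma=[0,\dots,r-1]$, read off the explicit top-degree form of $\Phi$. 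Matching the two scalars is exactly where Condition \eqref{cond:3} of Construction \ref{cons:1} enters essentially: it identifies $\partial f$ in degree $r$ with the Poincar\'e-duality pairing of Definition \ref{d:join_product} --- $\partial f(\tau_1\otimes\tau_2)$ equals $(-1)^{\lambda(\tau_1,\tau_2)}\emptyset$ when $\tau_2=\tau_1^{c}$ and vanishes otherwise --- which is what controls the bottom of the recursion defining $S$, hence the value of $S(\partial A)$. I expect this matching at $q=r$ to be the main obstacle; everything else is the formal manipulation above, together with the routine checks on the low-degree behaviour of $\kappa$, $S$ and $\theta_{r-1}$.
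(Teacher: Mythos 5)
Your skeleton is the same as the paper's: establish the claim for $q\le r-1$, then induct through the recursion $\Psiom_q=\tfrac1{\tilde r!}\theta_{r-1}\Psiom_{q-r+1}S$, with the only genuinely nonformal step at the seam $q=r$. You correctly identify Condition \eqref{cond:3} of Construction \ref{cons:1} and the top-degree behaviour of $\Phi$ as the ingredients that must mesh there, and your inductive step for $q\ge r+1$ is sound.

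The gap is that the seam case is described but not verified, and this is precisely the content of the proposition. Two things are missing. First, the computation of the two sides when $A$ has no repeated entries: the paper reduces to $A=A_1\barra A_2$, evaluates $\Psiom_{r-1}(\partial A)=\sum_i(-1)^i\Phi(\diff_i(A_1\ast A_2))$ term-by-term to get $(-1)^{\lambda(A_1,A_2)}\tfrac1{\tilde r!}N(e_{r-1})$, and separately evaluates $\partial\Psiom_r(A)$ via Condition \eqref{cond:3} to get the same answer; you assert the identity $\Phi(\partial\sigma)=\tfrac1{\tilde r!}N(e_{r-1})$ without deriving it, and more importantly you do not track how the signs of $\kappa(\partial A)$ and the $(-1)^{\lambda(A_1,A_2)}$ on the other side line up, which is where an error could easily hide. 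Second, the case where $A$ has repeated entries across pieces is not addressed at all: there $\partial f$ vanishes by Condition \eqref{cond:3}, so $\partial\Psiom_r(A)=0$, but it is a separate check that $\Psiom_{r-1}(\partial A)=\Phi_{r-1}\kappa(\partial A)$ also vanishes; the paper does this by observing that with exactly one repeated entry the surviving summands cancel in pairs, and with more repeats every summand is already zero. Without these two verifications the step you flag as ``the main obstacle'' remains unproven, so the proposal does not yet constitute a proof of the proposition.
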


\begin{proof}
	$\Psiom_*$ is already a chain map up to degree $*\leq r-1$. To check that it commutes with the differential in degree $r$, we distinguish two cases:

	First, if $\Psiom(A) = 0$, then $A$ has repeated entries. If it has one repeated entry, then the image by $\Psiom$ of its boundary has two cancelling terms, while if it has more repeated entries, it is zero on the nose.

	Second, if $\Psiom(A)\neq 0$, then $A$ has no repetitions. We may assume, as before, that $A = A_1|A_2$ has only two pieces, and write $A_1*A_2$ for their join and $\diff_iA_1*A_2$ for the result of removing the $i$th entry. Then:
	\begin{align*}
		\Psi_{r-1}(\partial A)
		&= \sum_i (-1)^i \Phi(\diff_i(A_1*A_2))
		\\
		&= (-1)^{\lambda(A_1,A_2)}\sum_i (-1)^i(-1)^{i}{\textstyle\frac{1}{\tilde{r}!}}\rho^{i+1} e_{r-1}
		\\
		&= (-1)^{\lambda(A_1,A_2)} {\textstyle\frac{1}{\tilde{r}!}} N(e_{r-1}).
	\end{align*}
	On the other hand,
	\begin{align*}
		\partial \circ \Psi_r(A) &= {\textstyle\frac{1}{\tilde{r}!}}\partial \circ \theta_{r-1}\circ \Psi_1\circ S(A)
		\\
		&= {\textstyle\frac{1}{\tilde{r}!}}\theta_{r-1}\circ \Psi_0\circ \partial\circ S(A)
		\\
		&= {\textstyle\frac{1}{\tilde{r}!}}\theta_{r-1}\circ \Psi_0\circ \partial\circ f(A_1 \ot A_2)
		\\
		&= (-1)^{\lambda(A_1,A_2)}{\textstyle\frac{1}{\tilde{r}!}} \theta_{r-1}(e_0)
		\\
		&= (-1)^{\lambda(A_1,A_2)}{\textstyle\frac{1}{\tilde{r}!}} N(e_{r-1}).
	\end{align*}
	Finally, to check that $\Psi$ commutes with the differential in degrees $q>r$, one proceeds by induction.
\end{proof}

\begin{proposition}\label{prop:condition}
	Let $f$ be a chain map as in Construction \ref{cons:1}, satisfying Conditions \eqref{cond:1}, \eqref{cond:2} and \eqref{cond:3}. If $\Psi$ is defined as in Definition \ref{def:psiom}, and $A\in \Omega_*(r)$ has a full piece,
	\[
	\tilde{r}!\Psiom_{q-1}(\partial^{\nf} (A)) = (-1)^q\theta_{r-1}\circ \Psiom_{q-r}\circ\DDD(A).
	\]
\end{proposition}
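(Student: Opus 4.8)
The plan is to establish the identity by induction on $q=\deg A$, the main tool being the recursive clause of Definition~\ref{def:psiom}: for $q\ge r$ one has $\tilde r!\,\Psi_q=\theta_{r-1}\circ\Psi_{q-r+1}\circ S$. In the base case $q=r$, the only pieced word of that degree carrying a full piece is, up to a sign, the single full piece $\sigma=[0,1,\dots,r-1]$ itself; then $\partial^{\nf}A=\partial A$ and $\DDD(A)=(-1)^{r}e_{0}$, so the right-hand side is $(-1)^{q}\theta_{r-1}\bigl((-1)^{r}e_{0}\bigr)=N(e_{r-1})$. For the left-hand side, $\Psi_{r-1}(\partial\sigma)=\sum_i(-1)^i\Phi(\diff_i\sigma)$ is evaluated exactly as in the first display of the proof of Proposition~\ref{prop:chain}; since $r$ is odd one has $\varphi(r-1)=0$ and $(-1)^{r-1}=1$, and passing to cyclic representatives of the faces $\diff_i\sigma$ yields $\Psi_{r-1}(\partial\sigma)=\tfrac1{\tilde r!}N(e_{r-1})$, so $\tilde r!\,\Psi_{r-1}(\partial^{\nf}A)=N(e_{r-1})$, in agreement.

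For the inductive step $q>r$, I would write $A_p$ for the (unique) full piece and $A_j$ for the pivotal piece of $A$. Because $A_{j+1}*\dots*A_k$ has degree $<r$ whereas $|A_p|=r$, necessarily $p\le j$. All summands of $\partial^{\nf}A$ share the same piece-lengths, so $S$ processes them through a common pivotal piece. When $p=j$, equivalently $|A_{p+1}|+\dots+|A_k|<r$, that common pivotal piece is $A_p\setminus v$ when $p<k$ and $A_{k-1}$ when $p=k$; in both situations, invoking $\sum_v\pm(A_p\setminus v)=\partial\sigma$ together with Conditions \eqref{cond:1} and \eqref{cond:2} of Construction~\ref{cons:1} (namely $f(\tau\otimes\partial\sigma)=\tau$ and $f(\partial\sigma\otimes\tau)=\rho\tau$), one obtains
\[
S(\partial^{\nf}A)=\pm\bigl(A_0\barra\dots\barra A_{p-1}\barra\rho(A_{p+1}*\dots*A_k)\bigr),
\]
where the last piece is omitted if $p=k$. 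Since the recursion defining $\Psi$ joins the pieces of this tail in any case, $\Psi_{q-r}$ takes this word to $\pm\Psi_{q-r}(\DDD A)$, and applying the recursion once to the left-hand side gives $\tilde r!\,\Psi_{q-1}(\partial^{\nf}A)=\theta_{r-1}\Psi_{q-r}\bigl(S(\partial^{\nf}A)\bigr)=\pm\,\theta_{r-1}\Psi_{q-r}(\DDD A)$; this case requires no inductive hypothesis, only a sign check.

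When instead $p<j$, equivalently $|A_{p+1}|+\dots+|A_k|\ge r$ (which forces $q\ge 2r$), the map $S$ alters only the pivotal piece and the tail to its right, so it leaves $A_p$ untouched; moreover $f(A_j\otimes(A_{j+1}*\dots*A_k))$ has degree $<r$, so $SA$ still has $A_p$ as its unique full piece while $\deg SA=q-r+1<q$. As $S$ commutes with deleting a vertex of $A_p$ we get $S(\partial^{\nf}A)=\partial^{\nf}(SA)$, and the $\Cyc_r$-equivariance of $f$ yields $S\circ\DDD=\DDD\circ S$ on such words. Unwinding the recursion once on both sides of the desired identity, using that $\theta_{r-1}$ composed with itself is $\theta_{2r-2}$, and feeding in the inductive hypothesis for $SA$ reduces everything to the statement already proved at degree $q-r+1$; the discrepancy between $(-1)^q$ and $(-1)^{q-r+1}$ is harmless because $r$ is odd.

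The hard part will be the sign bookkeeping woven through all of this: one must reconcile the factor $(-1)^{r(\cdots)}$ in the definition of $\DDD$, the permutation signs $\lambda$ carried by $\kappa$ and by $S$, the suspension signs attached to $\theta_{r-1}$, and the signs in Conditions \eqref{cond:1}--\eqref{cond:3}; in particular the identities $S(\partial^{\nf}A)=\pm\DDD A$ (case $p=j$) and $S\circ\DDD=\DDD\circ S$ (case $p<j$) have to be verified with their precise signs. Throughout, the oddness of $r$ is used repeatedly to collapse $(-1)^{r\cdot(-)}$ to $(-1)^{(-)}$.
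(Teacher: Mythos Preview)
Your proposal is correct and follows essentially the same strategy as the paper: unwind the recursion $\tilde r!\,\Psi_{q}=\theta_{r-1}\circ\Psi_{q-r+1}\circ S$ once, then use Conditions~\eqref{cond:1} and~\eqref{cond:2} of Construction~\ref{cons:1} for the base situations and induction for the rest. The main difference is organizational. The paper inducts on the position of the full piece from the right, splitting into \emph{last piece} (Condition~\eqref{cond:1}), \emph{penultimate piece} (Condition~\eqref{cond:2}), and \emph{otherwise} (induction via $S(\partial^{\nf}A)=\partial^{\nf}(SA)$ and $S\circ\DDD=\DDD\circ S$). You instead split according to whether the full piece $A_p$ coincides with the pivotal piece $A_j$: when $p=j$ you handle everything directly via Conditions~\eqref{cond:1}--\eqref{cond:2}, and only when $p<j$ do you invoke the inductive hypothesis on $SA$.

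Your decomposition is arguably cleaner: it makes transparent that the extension of $S$ to a word with a full piece, needed in the inductive step, only makes sense when that full piece is \emph{not} the pivotal one (i.e.\ $p<j$), and it treats uniformly the situation where the full piece is pivotal but sits more than one position from the right (for instance $r=5$ and $A=[0,1,2,3,4]\,|\,[0]\,|\,[1]$), which in the paper's trichotomy falls under ``otherwise'' yet is really the same computation as the penultimate case via Condition~\eqref{cond:2}. The paper's version, on the other hand, has the advantage of tracking the signs explicitly in each case, whereas you defer this bookkeeping; when you fill in those signs you will find they match the paper's displays line by line. Your separate base case $q=r$ is a minor redundancy (it is the degenerate instance of $p=j=k$ with no preceding piece) but it does no harm and the computation you cite from Proposition~\ref{prop:chain} is exactly what is needed.
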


\begin{proof}
	We will prove it by induction on the position of the full piece from the right. Since $A$ has one full piece, $q \geq r$. We assume without loss of generality that the full piece is $[0,1,\dots,r-1]$ (i.e., it is ordered), and we distinguish three cases:

	If the full piece is the last piece, write $A=\bar{A}\barra A_1\barra A_2$ with $A_2 = \sigma$ the full piece and $A_1$ the penultimate piece, and let $\ell$ be the length of $\bar{A}*A_1$. Then by Condition \eqref{cond:1}:
	\begin{align*}
		\tilde{r}!\Psiom_{q-1}(\partial^{\nf} (A)) &=
		\theta_{r-1} \circ \Psiom_{q-r}\circ S (\partial^{\nf}(A)) \\
		&= (-1)^{\ell} \theta_{r-1}\circ\Psiom_{q-r}\circ S(\bar{A}\barra A_1\barra \partial A_2) \\
		&= (-1)^{\ell} \theta_{r-1}\circ\Psiom_{q-r}(\bar{A}\barra f(A_1 \ot \partial \sigma)) \\
		&= (-1)^{\ell} \theta_{r-1}\circ\Psiom_{q-r}(\bar{A}\barra A_1) \\
		&= (-1)^q\theta_{r-1}\circ \Psiom_{q-r}\circ\DDD(A).
	\end{align*}
	If the full piece is the penultimate piece, write $A=\bar{A}\barra A_1\barra A_2$ with $A_1$ the full piece and $\ell$ for the length of $\bar{A}$. Then by Condition \eqref{cond:2}:
	\begin{align*}
		\tilde{r}!\Psiom_{q-1}(\partial^{\nf} A) &=
		\theta_{r-1}\circ \Psiom_{q-r}\circ S(\partial^{\nf}(A)) \\
		&= (-1)^{\ell} \theta_{r-1}\circ \Psiom_{q-r}\circ S(\bar{A}\barra \partial A_1\barra A_2) \\
		&= (-1)^{\ell} \theta_{r-1}\circ \Psiom_{q-r}(\bar{A}\barra f(\partial A_1 \ot A_2)) \\
		&= (-1)^{\ell} \theta_{r-1}\circ \Psiom_{q-r}(\bar{A}\barra \rho(A_2)) \\
		&= (-1)^q\theta_{r-1}\circ \Psiom_{q-r}\circ \DDD(A).
	\end{align*}
	Otherwise, $q \geq 2r$, and we reason by induction as follows:
	\begin{align*}
		(\tilde{r}!)^2\Psiom_{q-1}(\partial^{\nf} A)
		&= (-1)^q\tilde{r}!\theta_{r-1}\circ \Psiom_{q-r}\circ S(\partial^{\nf} A) \\
		&= (-1)^q\tilde{r}!\theta_{r-1}\circ \Psiom_{q-r}(\partial^{\nf} SA) \\
		&= (-1)^{q+q-r}\theta_{r-1}\circ \theta_{r-1}\circ \Psiom_{q-2r+1}\circ \DDD (SA)) \\
		&= (-1)^{q+q-r}\theta_{r-1}\circ \theta_{r-1}\circ \Psiom_{q-2r+1}\circ S\circ \DDD (A) \\
		&= (-1)^{q}\tilde{r}!\theta_{r-1}\circ \Psiom_{q-r}\circ \DDD (A)\qedhere
	\end{align*}
\end{proof}

\section{Cyclic straightenings and subdivisions of simplices}\label{section:atlast}

\subsection{Barycentric subdivisions}\label{s:assembly}

Recall that the barycentric subdivision $\sd \partial\simplex^n$ of the boundary of the standard simplex $\simplex^n$ is the simplicial complex that has one vertex for each non-empty face of $\simplex^n$ and one face $(\tau_0,\dots,\tau_k)$ of dimension $k$ for every ascending chain $\tau_0 \subset \tau_1\subset\dots \subset \tau_k$ of simplices of $\partial \simplex^n$. We will denote the face $\tau_0 \subset \dots \subset \tau_k)$ as $(\bar{\tau}_0|\bar{\tau}_1|\dots|\bar{\tau}_k)$, where $\bar{\tau}_i = \tau_i\smallsetminus \tau_{i-1}$ is the face such that the geometric join $\bar{\tau}_i\star \tau_{i-1}$ is the face $\tau_i$. With this notation, the differential on $\cadenas(\sd \partial \simplex^n)$ becomes
\[
\partial(\bar{\tau}_0|\bar{\tau}_1|\dots|\bar{\tau}_k) = \sum_{i=0}^{k-1} (-1)^i(\bar{\tau}_0|\dots|\bar{\tau}_i\star \bar{\tau}_{i+1}|\dots |\bar{\tau}_k) + (-1)^k (\bar{\tau}_0|\dots|\bar{\tau}_{k-1}).
\]

The \emph{pair barycentric subdivision} $\Psd \partial\simplex^n$ of $\partial \simplex^n$ is a cubulation of $\partial \simplex^n$ with the same vertices as $\sd \partial \simplex^n$ and one face for each pair $(b,a)$ of faces of $\partial\simplex^n$ such that $b \subset a$ (cf. \cite{Rounds2010}). Geometrically, that face is the union of all the faces of dimension $|a|-|b|$ of the barycentric subdivision that correspond to ascending chains $b = \tau_0 \subset \dots \subset \tau_k= a$. Interpreting $b$ as a dual cochain, its chain complex $\cadenas(\Psd \partial\simplex^n)$ is isomorphic to the tensor product $\cochains(\partial\simplex^n) \ot \chains(\partial\simplex^n)$ modulo the pairs $b \ot a$ such that the support of $b$ is not contained in $a$.

There are chain maps
\[
\xymatrix{
	&\susp{-n-1}\chains(\partial\simplex^n) \ot \chains(\partial\simplex^n)\ar[d]^h& \\
	\cadenas(\partial\simplex^n)\ar[r]^{s_*^{\simplex}}\ar@/_2.0pc/[rr]^{s_*} & \cadenas(\Psd \partial\simplex^n)\ar[r]^{s^{\Psd}_*} & \cadenas(\sd\partial\simplex^n)
}
\]
defined as follows: if $[a_0,\dots,a_{k-1}]\in \cadenas(\partial\simplex^n)$ is an ordered representative of a generator,
\begin{align*}
	s_*([a_0,\dots,a_{k-1}]) &= \sum_{\perm\in \Sigma_{k}} (-1)^{\sign{\perm}}(a_{\perm(0)}|a_{\perm(1)}|\dots|a_{\perm(k-1)}),
	\\
	s_*^{\simplex}([a_0,\dots,a_{k-1}]) &= (-1)^{k}\sum_{j=0}^{k-1} (a_j) \ot [a_0,\dots,a_{k-1}].
\end{align*}
If $b = (b_0,\dots,b_{m-1})\in \cochains(\partial\simplex^n)$ and $a = [b_0,\dots,b_{m-1},c_0,\dots,c_{\ell-1}]\in \chains(\partial\simplex^n)$ with $k = m+\ell$, then
\begin{equation}\label{eq:sP}
	s_*^{\Psd}(b \ot a) = (-1)^{\ell+1}\sum_{\perm\in \Sigma_{k+1}} (-1)^{\sign{\perm}} (b|c_{\perm(0)}|c_{\perm(1)}|\dots|c_{\perm(\ell-1)})
\end{equation}
Notice that the letter $b$ plays a different role in each side: on the left, it is a dual generator of the normalized cochain complex, thus it makes sense to permute its entries changing the generator by a sign. On the right, $b$ is a vertex of $\sd\partial\simplex^n$.

\begin{remark}
	Let $\twist \colon A \ot B \to B \ot A$ the chain map that sends $a \ot b$ to $(-1)^{|a||b|}b \ot a$. Then
	\begin{align*}
		(f \ot g)\circ \twist &= (-1)^{|f||g|}\twist \circ (g \ot f)
	\end{align*}
\end{remark}

Recall the Alexander duality isomorphism $\alex \colon \chains(\simplex^n) \to \susp{n+1}\cochains(\simplex^{n+1})$, that sends a generator $\tau$ of degree $k$ to $(-1)^{\lambda(\tau,\tau^c)} (\tau^c)^{\vee}$. Define the chain map $h$ as
\[
h( b \ot a) = (\alex \ot \id)(b \ot a).
\]
where $b$ has degree $m$ and $a$ has degree $k$.

Consider the following endomorphisms of the chain complexes of the barycentric and the pair subdivision
\begin{align*}
	\alex \colon \sd \partial\simplex^n& \lra \sd \partial\simplex^n,
	&
	\alex \colon \Psd \partial\simplex^n& \lra \Psd \partial\simplex^n.
\end{align*}
The first one sends an ascending chain of simplices $\tau_0 \subset \dots \subset \tau_{k-1}$ to the ascending chain of their complementary simplices $(-1)^{\binom{k}{2}} \tau_{k-1}^c \subset \dots\subset\tau_0^c$, which in our notation becomes $(\bar{\tau}_0|\bar{\tau}_1|\dots|\bar{\tau}_{k-1})\mapsto (-1)^{\binom{k}{2}}(\tau_{k-1}^c|\bar{\tau}_{k-1}|\dots|\bar{\tau}_1)$. The second one sends $b \ot a$ to $\twist\circ \left(\alex^{-1} \ot \alex\right)(b \ot a)$.
\begin{lemma} The following diagram commutes, with each square commuting up to the indicated sign:
	\begin{equation}\label{diag:newdiag}
		\xymatrix{
			\susp{-n-1}\chains(\partial \simplex^n*\partial\asimplex^n)\ar[r]^-h\ar[d]^{\twist} \ar@{}[dr] | {(-1)^{n+1}}&\cadenas(\Psd \partial \asimplex^n) \ar[r]^{s_*^{\Psd}}\ar[d]_{\alex} \ar@{}[dr] | {(-1)^{n+1}}& \cadenas(\sd \partial \asimplex^n)\ar[d]^{\alex}\\
			\susp{-n-1}\chains(\partial \asimplex^n*\partial\simplex^n)\ar[r]^-h &\cadenas(\Psd \partial \asimplex^n) \ar[r]^{s_*^{\Psd}}& \cadenas(\sd \partial \asimplex^n).
		}
	\end{equation}
\end{lemma}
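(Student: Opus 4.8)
The plan is to establish the two squares of \eqref{diag:newdiag} separately, each by evaluating the two composites on a basis element; the outer rectangle then commutes strictly because $(-1)^{n+1}\cdot(-1)^{n+1}=1$. In each square the two composites will be seen to produce the same underlying unsigned generator (or sum of generators), so the entire content is a sign comparison.

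For the left square, take a basis element $b\ot a$ of $\susp{-n-1}\chains(\partial\simplex^n\ast\partial\asimplex^n)$, with $b$ a face of the first join factor and $a$ a face of the second. The clean way to see commutativity up to sign is to observe that, modulo Koszul signs, $h$ on each row is Alexander duality applied to the first tensor factor, $\twist$ interchanges the two factors, and $\alex$ on $\Psd\partial\asimplex^n$ is by definition $\twist\circ(\alex^{-1}\ot\alex)$; since $\alex^{-1}\alex=\id$, the composite $(\alex^{-1}\ot\alex)\circ(\alex\ot\id)$ equals $\id\ot\alex$, and one is reduced to comparing $\twist\circ(\id\ot\alex)$ with $(\alex\ot\id)\circ\twist$, which differ only through the Koszul signs carried by the two applications of $\twist$. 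I would then evaluate this difference explicitly using the degree conventions of \cref{s:preliminaries} for $\alex$, its inverse, and $\twist$, together with the formula of \cref{remark:alex} for $\lambda(\tau,\tau^c)$ in terms of vertex sums; the formal-dimension relation $\lvert\tau\rvert+\lvert\tau^c\rvert=n+1$ is exactly what forces the leftover sign to be $(-1)^{n+1}$ uniformly in $a$ and $b$.

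For the right square, take a basis element $b\ot a$ of $\cadenas(\Psd\partial\asimplex^n)$, so $b$ is a cochain whose support is contained in the chain $a$; write $a=[b_0,\dots,b_{m-1},c_0,\dots,c_{\ell-1}]$ with $b=(b_0,\dots,b_{m-1})$. Expanding $s_*^{\Psd}$ via \eqref{eq:sP} yields a signed sum of flags in $\sd\partial\asimplex^n$ running from the support of $b$ to the support of $a$, obtained by inserting the $c_i$ in all orders, and then $\alex$ on $\sd\partial\asimplex^n$ turns each into the reversed flag of complementary faces, running from $(\mathrm{supp}\,a)^{c}$ to $(\mathrm{supp}\,b)^{c}$. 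On the other side, $\alex$ on $\Psd\partial\asimplex^n$ sends $b\ot a$ to $\pm\,((\mathrm{supp}\,a)^{c})^{\vee}\ot(\mathrm{supp}\,b)^{c}$ (a legitimate generator because $\mathrm{supp}\,b\subset\mathrm{supp}\,a$), and $s_*^{\Psd}$ then produces a signed sum of exactly the same flags, now indexed by the insertion orders of the same $c_i$. After matching each $\perm\in\Sigma_\ell$ on one side with its reversal on the other, the two signed sums agree up to a global sign obtained by collecting the sign $(-1)^{\ell+1}$ of $s_*^{\Psd}$ occurring on both sides, the sign $(-1)^{\binom{\ell+1}{2}}$ built into $\alex$ on $\sd$, the reversal sign $(-1)^{\binom{\ell}{2}}$ relating $\perm$ to its reverse, and the Alexander--duality signs of $\alex$ and $\alex^{-1}$ on the $\Psd$ side together with the Koszul sign of $\twist$; one then checks that this collection simplifies to $(-1)^{n+1}$, again via \cref{remark:alex} and $\lvert\mathrm{supp}\,a\rvert+\lvert(\mathrm{supp}\,a)^{c}\rvert=n+1$.

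The main obstacle is precisely this sign bookkeeping: the two squares involve two rather different provenances of signs---the binomial coefficients $\binom{\cdot}{2}$ hard-wired into the definitions of $\alex$ on $\sd$ and of $s_*^{\Psd}$, versus the vertex-sum expressions for $\lambda(\tau,\tau^c)$ coming from Alexander duality---and reconciling them into the single uniform constant $(-1)^{n+1}$ requires care, especially in keeping the degree shifts $\susp{\pm(n+1)}$ consistent throughout. No conceptual ingredient beyond \cref{remark:alex} and the conventions of \cref{s:preliminaries} enters.
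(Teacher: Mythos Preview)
Your proposal follows essentially the same route as the paper: both squares are verified by direct sign-chasing on generators, and for the right square your outline (expand $s_*^{\Psd}$, match each $\perm$ with its reversal, then reconcile the $\lambda$-signs via \cref{remark:alex}) is exactly what the paper does. One small correction for the left square: the sign $(-1)^{n+1}$ does not come from the two applications of $\twist$ but from the Koszul composition law for graded maps, namely $(\alex^{-1}\ot\alex)\circ(\alex\ot\id)=(-1)^{(n+1)^2}(\id\ot\alex)$; after that, $\twist\circ(\id\ot\alex)=(\alex\ot\id)\circ\twist$ holds with no sign since $\id$ has degree~$0$. So the left square is purely formal and needs no appeal to \cref{remark:alex} or vertex sums---the paper dispatches it in one line.
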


\begin{proof}
	For the first square, we have
	\begin{align*}
		\twist\circ(\Lambda^{-1} \ot \Lambda)\circ (\Lambda \ot \id)
		&= (-1)^{n+1} \twist\circ(\id \ot \Lambda)
		= (-1)^{n+1} h.
	\end{align*}
	For the second square, observe that, with the notation of \eqref{eq:sP}, if $b = (b_0,\dots,b_{m-1})$ is a generator of $\cocadenas(\PsdSimp)$ and $a = [b_0,\dots,b_{m-1},c_0,\dots,c_{\ell-1}]$ is a generator of $\cadenas(\PsdSimp)$, with $k=m+\ell$,
	\begin{equation}\label{eq:lastsign}
		\begin{split}
			\alex\circ s_*^{\Psd} (b \ot a)
			&= (-1)^{\ell+1}\sum_{\perm} (-1)^{\sign{\perm}} \alex(b|c_{\perm(0)}|\dots|c_{\perm(\ell-1)})
			\\
			&= -\sum_{\perm} (-1)^{\sign{\perm}} (a^c|c_{\perm(0)}|\dots|c_{\perm(\ell-1)})
		\end{split}
	\end{equation}
	(be aware that the last equality equates the summand indexed by a permutation $\perm$ with the summand indexed by the permutation $\perm\circ \mathrm{rev}$, where $\mathrm{rev}$ is the permutation that sends $(0,1,\dots,\ell-2,\ell-1)$ to $(\ell-1,\ell-2,\dots,1,0)$, and the extra sign of the permutation $\mathrm{rev}$ is the sign $(-1)^{\binom{\ell}{2}}$ carried by $\alex$, and an additional sign $(-1)^{\ell}$ is obtained by moving $a^c$ from the last position to the first position over $(c_0,\dots,c_{\ell-1})$).

	Suppose now that both $b=[b_0,\dots,b_{m-1}]$ and $c=[c_0,\dots,c_{\ell-1}]$ are ordered, and that $a = [a_0,\dots,a_k]$ is the result of ordering $[b_0,\dots,b_{m-1},c_0,\dots,c_{\ell-1}]$. Define $x=[x_0,\dots,x_{n-k}]$ as the ordered complement of $a$, and let $y=[y_0,\dots,y_{n-m}]$ be the ordered complement of $b$. Recall that we denote by $\lambda(a,b)$ the sign that orders $a\cup b$. Then $s_*^{\Psd}\circ \Lambda$ produces the following sign:
	\[
	\xymatrix{
		(b_0,\dots,b_{m-1}) \ot [b_0,\dots,b_{m-1},c_0,\dots,c_{\ell-1}]
		\ar[d]^-{(-1)^{\lambda(b,c)}}_-{\sim}
		\\
		(b_0,\dots,b_{m-1}) \ot [a_0,\dots,a_{k-1}]
		\ar[d]^{(-1)^{m(n+1)+\lambda(a,x)}}_{\id \ot \Lambda}
		\\
		(b_0,\dots,b_{m-1}) \ot (x_0,\dots,x_{n-k})
		\ar[d]^-{(-1)^{\lambda(y,b)}}_-{\Lambda^{-1} \ot \id}
		\\
		[y_0,\dots,y_{n-m}] \ot (x_0,\dots,x_{n-k})
		\ar[d]^-{(-1)^{(n-m+1)(n-k+1)}}_-{\twist}
		\\
		(x_0,\dots,x_{n-k}) \ot [y_0,\dots,y_{n-m}]
		\ar[d]^-{(-1)^{\lambda(x,c)}}_-{\sim}
		\\
		(x_0,\dots,x_{n-k}) \ot [x_0,\dots,x_{n-k},c_0,\dots,c_{\ell-1}]
		\ar[d]^-{(-1)^{\ell+1}}_-{s_*^{\Psd}}
		\\
		\sum_{\perm} (-1)^{\sign{\perm}}[x,c_{\perm(0)},\dots,c_{\perm(\ell-1)}]
	}
	\]
	Now, $\lambda(b,c) + \lambda(a,x)$ computes the sign of the permutation that orders
	\[
	[b_0,\dots,b_{m-1},c_0,\dots,c_{\ell-1},x_{0},\dots,x_{n-k}],
	\]
	and so does $\lambda(c,x)+\lambda(b,y)$. Since
	\begin{align*}
		\lambda(c,x) &\equiv \lambda(x,c) + \ell(n+1-k)
		&
		\lambda(b,y) &\equiv \lambda(y,b) + m(n+1-m),
	\end{align*}
	we may replace $\lambda(b,c)+\lambda(a,c) + \lambda(x,c) + \lambda(y,b)$ by $\ell(n+1-k)+m(n+1-m)$. Using that $k=m+\ell$, we get that the total sign is $(-1)^{n}$. Together with the $-1$ sign obtained in \eqref{eq:lastsign} we obtain that the second square also commutes up to $(-1)^{n+1}$.
\end{proof}

\subsection{Cyclic straightenings and the construction of the map \texorpdfstring{$f$}{f}}

\begin{definition}
	An \emph{assemblage map} is a simplicial map $g \colon \sd \partial \simplex^n \to \partial \simplex^n$ such that the composition of chain maps $g_*\circ s_*$ is the identity.
\end{definition}

\begin{definition}
	An \emph{$r$-cyclic assemblage map} is an assemblage map $g \colon \sd \partial \simplex^{r-1} \to \partial \simplex^{r-1}$ that is cyclic with respect to the forward action of the cyclic group $\Cyc_{r}$.
\end{definition}

\begin{definition}
	An \emph{$r$-cyclic assemblage map with duality} is an $r$-cyclic assemblage map $g$ such that the composition
	\[\sd \partial \simplex^{r-1} \overset{\alex}{\lra} \sd \partial\simplex^{r-1} \overset{g}{\lra} \partial\simplex^{r-1}\overset{\rho^{-1}}{\lra} \partial\simplex^{r-1} \]
	is also an assemblage map.
\end{definition}

\begin{proposition}\label{prop:assemblage}
	Let $r$ be odd. If $g$ is an $r$-cyclic assemblage map with duality, then the composition
	\[
	\xymatrix{
		\sus{-r}\left(\chains(\asimplex^{r-1}) \ot \chains(\asimplex^{r-1})\right)\ar[d]^{(-1)^{r(k+m)}}&&
		\chains(\asimplex^{r-1})
		\\
		\susp{-r}\left(\chains(\asimplex^{r-1}) \ot \chains(\asimplex^{r-1})\right)\ar[d]^-{\twist} &&
		\cadenas(\asimplex^{r-1})\ar[u]
		\\
		\susp{-r}\left(\chains(\asimplex^{r-1}) \ot \chains(\asimplex^{r-1})\right)\ar[r]^-{h} &
		\Psd(\asimplex^{r-1}) \ar[r]^{s_*^{\Psd}} &
		\cadenas(\sd\asimplex^{r-1})\ar[u]_{g_*}
		}
	\]
	satisfies the conditions of Construction \ref{cons:1}.
\end{proposition}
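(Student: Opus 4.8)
The plan is to show that the indicated composition, which I will call $f$, is a $\Cyc_r$-equivariant chain map of the prescribed degree and then to check Conditions \eqref{cond:1}, \eqref{cond:2} and \eqref{cond:3} one at a time. That $f$ is an equivariant chain map is formal: it is a composite of the equivariant chain maps $h$, $s_*^{\Psd}$, $g_*$, the transposition $\twist$, and the sign isomorphism $\sus{-r}(-)\to\susp{-r}(-)$ of \cref{s:preliminaries} — here $g_*$ is equivariant because $g$ is an $r$-cyclic assemblage map, and $h$ is equivariant because $\alex$ is — and adding up the suspension shifts shows its source and target are those of Construction \ref{cons:1}. Apart from sign bookkeeping, which is the recurring difficulty and where I expect to use repeatedly that $r$ is odd (so $r-1$ is even), each of the three verifications is short.

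For \eqref{cond:1} I would evaluate $f$ on $\iota_1(\tau)=\tau\ot\partial\sigma$. After the prefactor and $\twist$ this is $\pm\,\partial\sigma\ot\tau$; since $\partial\sigma=\sum_i(-1)^i[0,\dots,\widehat{i},\dots,r-1]$ and $\alex$ sends $[0,\dots,\widehat{i},\dots,r-1]$ to $(-1)^{r-1-i}(i)^\vee$ (see \eqref{eq:iso1} and Remark \ref{remark:alex}), the oddness of $r$ makes all signs collapse, giving $\alex(\partial\sigma)=\sum_i(i)^\vee$ with unit coefficients. Passing to $\cadenas(\Psd\partial\simplex^{r-1})$ kills the summands whose support is not contained in $\tau$, so $h(\partial\sigma\ot\tau)$ equals, up to a sign absorbed by the prefactor, $\sum_{i\in\tau}(i)^\vee\ot\tau=\pm\,s_*^{\simplex}(\tau)$. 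Hence $s_*^{\Psd}\circ h$ sends $\iota_1(\tau)$ to $\pm\,s_*(\tau)$ with $s_*=s_*^{\Psd}\circ s_*^{\simplex}$, and the identity $g_*\circ s_*=\Id$ defining an assemblage map gives $f(\iota_1(\tau))=\tau$ once the accumulated sign is checked to be $+1$.

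For \eqref{cond:2} I would evaluate $f$ on $\iota_2(\tau)=\partial\sigma\ot\tau$ and feed in the diagram \eqref{diag:newdiag}: as each of its two squares commutes up to $(-1)^{n+1}$ with $n=r-1$, composing them gives $s_*^{\Psd}\circ h\circ\twist=\alex\circ s_*^{\Psd}\circ h$ (the two signs cancel). Reusing the computation of the previous paragraph, $f(\iota_2(\tau))$ then becomes $\pm\,g_*\circ\alex\circ s_*(\tau)$. The hypothesis that the assemblage map $g$ has \emph{duality} says precisely that $\rho^{-1}\circ g\circ\alex$ is again an assemblage map, i.e.\ $g_*\circ\alex\circ s_*=\rho$ — reconciling the two flavours of $\alex$ on $\sd\partial\simplex^{r-1}$ costs only a harmless sign $(-1)^{\binom{k}{2}}$ — so $f(\iota_2(\tau))=\pm\,\rho(\tau)$, and a sign check, again using $r$ odd, yields coefficient $+1$, i.e.\ $f\circ\iota_2=\rho$.

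For \eqref{cond:3}, take $\tau_1\ot\tau_2$ of degree $r$. After $\twist$ the map $h$ produces $\alex(\tau_2)\ot\tau_1=\pm\,(\tau_2^{c})^\vee\ot\tau_1$, whose support is $\tau_2^{c}$; in $\cadenas(\Psd\partial\simplex^{r-1})$ this vanishes unless $\tau_2^{c}\subseteq\tau_1$, and since $|\tau_1|+|\tau_2|=r$ forces $|\tau_2^{c}|=|\tau_1|$, it survives exactly when $\tau_1=\tau_2^{c}$, i.e.\ when $\tau_1\cup\tau_2$ is nondegenerate — this is the stronger Condition (iii$'$) of Remark \ref{remark:3prime}. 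In that case $h(\tau_2\ot\tau_1)=\pm\,(\tau_1)^\vee\ot\tau_1$ has no free vertices, so by \eqref{eq:sP} the map $s_*^{\Psd}$ sends it to $\pm\,(\tau_1)$, a single vertex of $\sd\partial\simplex^{r-1}$, which $g_*$ carries to the vertex $g(\tau_1)$, and under the inclusion and the differential this becomes $\pm\,\emptyset$. It then remains to check that this sign is $(-1)^{\lambda(\tau_1,\tau_2)}$; tracing the signs of $\alex$, of \eqref{eq:sP}, of $\twist$, of the prefactor and of the suspension conventions, and using $r$ odd once more, pins it down. As stated, the genuine obstacle throughout is this sign accounting, most delicate in \eqref{cond:2}, where the two copies of $(-1)^{n+1}$ coming from \eqref{diag:newdiag} must be reconciled with the transposition and suspension signs.
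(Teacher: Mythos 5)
Your proposal follows the paper's proof essentially step for step: identify the top simplex $\sigma$, push $\iota_1(\tau)$ and $\iota_2(\tau)$ through the composite using the identity $g_*\circ s_* = \Id$ (for \eqref{cond:1}), invoke the two-square commutativity of diagram \eqref{diag:newdiag} together with the duality hypothesis on $g$ (for \eqref{cond:2}), and observe that in degree $r$ the pair-subdivision generator degenerates to a vertex unless $\tau_1 = \tau_2^c$, which establishes Condition (iii$'$) of Remark~\ref{remark:3prime}. The only difference is that you leave the sign bookkeeping as an assertion (``pins it down'') where the paper carries it out explicitly; the aside about reconciling two flavours of $\alex$ is a non-issue since the chain-level $\Lambda$ on $\cadenas(\sd\partial\simplex^{r-1})$ in diagram~\eqref{diag:newdiag} already \emph{is} the induced map, with its $(-1)^{\binom{k}{2}}$ built in, that the duality condition on $g$ refers to.
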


\begin{proof}
	Let $\sigma = [0,1,\dots,r-1]$ be the top simplex of $\asimplex^{r-1}$. For condition \eqref{cond:1}, Since $r-1$ is even,
	\begin{align*}
		h(\partial\sigma \ot \tau) &= \Lambda(\partial \sigma) \ot \tau = (-1)^{r}\partial \Lambda(\sigma) \ot \tau
		\\&= (-1)^{r-1}\partial\emptyset \ot \tau = \sum_{v\in \tau} v \ot \tau,
	\end{align*}
	hence, since the degree of $\partial \sigma$ is $r-1$,
	\begin{align*}
		f(\tau \ot \partial\sigma)
		&=(-1)^{r(k+r-1)} g_*\circ s_*^{\Psd}\circ h \circ \mathrm{twist}(\tau \ot \partial\sigma)\\
		&= (-1)^{k} g_*\circ s_*^{\Psd}\circ h (\partial\sigma \ot \tau)\\
		&= g_*\circ s_*^{\Psd}\left((-1)^k\sum_{v\in \tau} v \ot \tau\right) \\
		&= g_*\circ s_*^{\Psd}\circ s_*^{\simplex}(\tau) \\
		&= \tau.
	\end{align*}
	For condition \eqref{cond:2}, since diagram \eqref{diag:newdiag} commutes, using that $\rho^{-1}\circ g_*\circ \Lambda$ is also an assemblage map,
	\begin{align*}
		f(\partial\sigma \ot \tau)
		&= (-1)^{r(r-1+k)}g_*\circ s_*^{\Psd}\circ h \circ \mathrm{twist}(\partial\sigma \ot \tau)
		\\
		&= (-1)^{r(r-1+k)} g_*\circ \Lambda\circ s_*^{\Psd}\circ h (\partial \sigma \ot \tau)
		\\
		&= (-1)^{r(r-1+k)} g_*\circ \Lambda\circ s_*^{\Psd}\circ h \circ \mathrm{twist}(\tau \ot \partial\sigma)
		\\
		&= (-1)^{r(r-1+k)}\rho\circ \rho^{-1} \circ g_*\circ \Lambda\circ s_*^{\Psd}\circ h \circ \mathrm{twist}(\tau \ot \partial\sigma)
		\\
		&= \rho(\tau).
	\end{align*}
	Finally, we prove the stronger Condition (iii') of Remark \ref{remark:3prime}. Let $\tau_1$ and $\tau_2$ have degree $m$ and $k$ with $m+k=r$. If $\tau_1*\tau_2$ is non-degenerate, then $\tau_2 = \tau_1^c$ and
	\begin{align*}
		h(\tau_1 \ot \tau_2)
		&= (-1)^{r\cdot r} g_*\circ s_*^{\Psd}\circ h\circ \twist(\tau_1 \ot \tau_2)
		\\
		&= (-1)^{r + mk} g_*\circ s_*^{\Psd}\circ h(\tau_2 \ot \tau_1)
		\\
		&= (-1)^{r + mk + \lambda(\tau_2,\tau_1)} g_*\circ s_*^{\Psd}(\tau_1 \ot \tau_1)
		\\
		&= (-1)^{r + mk + \lambda(\tau_2,\tau_1) + 1} g_*\circ s_*^{\Psd}(-\tau_1 \ot \tau_1)
		\\
		&= (-1)^{\lambda(\tau_1,\tau_2)} g_*(\tau_1)
	\end{align*}
	and $g_*(\tau_1)$ is a vertex. If $\tau_1*\tau_2$ is degenerate, then $h(\tau_2 \ot \tau_1) = 0$, and therefore $f(\tau_1 \ot \tau_2)=0$.
\end{proof}

Observe that $\Cyc_r$ acts freely on the vertices of $\partial \simplex^{r-1}$, but it only acts freely on the vertices of $\partial \sd\simplex^{r-1}$ if $r$ is prime. Therefore $r$-cyclic assemblage maps only exist if $r$ is prime. On the other hand, the following lemma, pointed to us by Martin Palmer, assures the existece of such maps when $r$ is prime.

\begin{definition}
	An \emph{$r$-straightening} is a choice, for each non-empty proper subset $\tau$ of $\{0,1,\dots,r-1\}$, of an element $x_\tau\in \tau$. An \emph{$r$-cyclic straightening} is an straightening that is equivariant with respect to the action of the cyclic group $\Cyc_r$. An \emph{$r$-cyclic straightening with duality} is an $r$-cyclic straightening such that the cyclic predecessor of $x_\tau$ is not an element of $\tau$.
\end{definition}

\begin{lemma}\label{lemma:straightening}
	Assemblage maps are in bijection with $r$-straightenings, $r$-cyclic assemblage maps are in bijection with $r$-cyclic straightenings, and $r$-cyclic assemblage maps with duality are in bijection with $r$-cyclic straightenings with duality.
\end{lemma}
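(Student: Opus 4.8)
The plan is to establish the three bijections in sequence, since each is a refinement of the previous one. For the first bijection, I would start from the definition of an assemblage map $g\colon \sd\partial\simplex^n\to\partial\simplex^n$. Such a $g$ is a simplicial map, hence determined by where it sends the vertices of $\sd\partial\simplex^n$, which are the non-empty proper faces $\tau$ of $\simplex^n$; moreover the target $\partial\simplex^n$ only has the vertices of $\simplex^n$ available in degree $0$, so $g$ sends each vertex $\tau$ to some vertex $g(\tau)\in\{0,\dots,n\}$. The key constraint is simpliciality: for each chain $\tau_0\subset\dots\subset\tau_k$, the images $g(\tau_0),\dots,g(\tau_k)$ must span a simplex of $\partial\simplex^n$, i.e.\ must all be distinct and must not exhaust all vertices. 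I would show that this is equivalent to requiring $g(\tau)\in\tau$ for every $\tau$. One direction is easy: if $g(\tau)\in\tau$ always, then along a chain the values lie in the respective faces but one still needs the condition $g_*\circ s_*=\id$; here I would compute $g_*(s_*[a_0,\dots,a_{k-1}])=\sum_{\perm}(-1)^{\sgn\perm}(g(a_{\perm(0)})|\dots|g(a_{\perm(k-1)}))$ and observe that the only way to get a non-degenerate (and hence surviving) term in $\cadenas(\partial\simplex^n)$ is if $g$ is the identity on vertices and the unique chain $a_0\subset a_0a_1\subset\dots$ reconstructs $[a_0,\dots,a_{k-1}]$; tracking the sign of the permutation that orders the $a_i$'s against the sign $(-1)^{\sgn\perm}$ shows everything collapses to $+[a_0,\dots,a_{k-1}]$ precisely when each chain's bottom vertex $g(a_i)$ equals $a_i$ — and more generally the condition $g_*\circ s_* = \id$ forces $g(\{i\}) = i$ on vertices and $g(\tau)\in\tau$ on all faces. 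So $g\mapsto(\tau\mapsto g(\tau))$ gives a straightening, and conversely any straightening $\tau\mapsto x_\tau$ defines $g$ on vertices by $g(\tau)=x_\tau$, which extends uniquely to a simplicial map (distinctness along chains being automatic once one checks that $x_{\tau_i}\in\tau_i$ and the chain is strictly ascending does not by itself prevent repetitions — this needs a small argument, and is where I'd be most careful).

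Next, the equivariant refinement is essentially formal: a $\Cyc_r$-equivariant simplicial endomorphism/target map corresponds exactly to an equivariant choice of vertices, because the action of $\Cyc_r$ on $\sd\partial\simplex^{r-1}$ is induced from the forward action on $\partial\simplex^{r-1}$ by $\tau\mapsto\rho\tau$, and $g(\rho\tau)=\rho g(\tau)$ is literally the condition that $\tau\mapsto x_\tau$ be $\Cyc_r$-equivariant. So $r$-cyclic assemblage maps biject with $r$-cyclic straightenings with no extra work beyond unwinding definitions.

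For the third, and most delicate, bijection I must match the duality condition on the map with the condition ``the cyclic predecessor of $x_\tau$ is not in $\tau$''. By definition, $g$ has duality iff $\rho^{-1}\circ g\circ\alex$ is again an assemblage map, and by the first bijection (applied to this new map) that holds iff $(\rho^{-1}\circ g\circ\alex)$ sends each vertex $\tau$ of $\sd\partial\simplex^{r-1}$ into $\tau$. Now $\alex$ on $\sd\partial\simplex^{r-1}$ sends the vertex $\tau$ (a face, viewed as a length-one chain) to its complement $\tau^c$, so the composite sends $\tau\mapsto\rho^{-1}(x_{\tau^c})$; but wait — I should instead read off what $\alex$ does at the level of vertices more carefully, since a vertex of $\sd$ is a single face and $\alex$ complements it, giving the vertex $\tau^c$, so the condition becomes $\rho^{-1}(x_{\tau^c})\in\tau$ for all proper non-empty $\tau$, equivalently (replacing $\tau$ by $\tau^c$) $\rho^{-1}(x_\tau)\in\tau^c$, i.e.\ $\rho^{-1}(x_\tau)\notin\tau$. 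Since $\rho^{-1}$ is precisely ``take the cyclic predecessor,'' this is exactly the stated condition. I would then note that one must also double-check the normalization $g_*\circ s_*=\id$ is preserved under $\rho^{-1}\circ(-)\circ\alex$, which follows because $\alex$ is an involution up to sign matching the sign conventions already fixed, and $\rho^{-1}$ is an isomorphism of simplicial sets, so the composite is again an assemblage map exactly when the vertex condition holds. The main obstacle, as flagged, is the first bijection: making precise why simpliciality of $g$ together with $g_*\circ s_*=\id$ is equivalent to the pointwise condition $g(\tau)\in\tau$, and in particular verifying that an arbitrary choice $x_\tau\in\tau$ really does extend to a well-defined simplicial map (one needs $x_{\tau_0},\dots,x_{\tau_k}$ distinct along every ascending chain, which I believe follows from a short induction on chain length using that $x_{\tau_i}\in\tau_i$ and the $\tau_i$ are nested, but it warrants an explicit lemma).
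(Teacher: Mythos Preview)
Your overall strategy matches the paper's: identify an assemblage map with its values on vertices, verify $g_*\circ s_*=\id$ by isolating a single surviving summand, and then unwind the equivariance and duality conditions. Your treatment of the duality case is exactly the paper's argument.

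The one genuine wrinkle is your flagged concern about distinctness. You write that extending a straightening $\tau\mapsto x_\tau\in\tau$ to a simplicial map requires $x_{\tau_0},\dots,x_{\tau_k}$ to be distinct along every ascending chain, and you propose to prove this as a lemma. That lemma is false: for instance $x_{\{0\}}=0$ and $x_{\{0,1\}}=0$ is a perfectly good straightening. The resolution is that a simplicial map between simplicial complexes is allowed to collapse simplices, so any function on vertices satisfying $x_\tau\in\tau$ extends to a simplicial map $g$; chains along which the $x_{\tau_i}$ repeat are simply sent to degenerate simplices, which vanish in $\cadenas$. So the map is well-defined with no extra hypothesis, and your ``explicit lemma'' should be dropped.

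With that in hand, the verification of $g_*\circ s_*=\id$ goes as the paper does it. One defines $\omega_{k-1}=a$ and $\omega_i=\omega_{i+1}\smallsetminus\{g(\omega_{i+1})\}$, and shows this is the \emph{only} chain whose image under $g$ is non-degenerate: if a chain $\tau_0\subset\dots\subset\tau_{k-1}=a$ first deviates from $\omega$ at level $i$ (so $\tau_{i+1}=\omega_{i+1}$ but $\tau_i\neq\omega_i$), then $g(\tau_{i+1})\in\tau_i$, and since also $g(\tau_0),\dots,g(\tau_i)\in\tau_i$, pigeonhole forces a repetition among $g(\tau_0),\dots,g(\tau_{i+1})$. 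Your sketch of this step was vaguer than the paper's, and leaned on the distinctness intuition; the pigeonhole argument is the missing ingredient.
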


\begin{proof}
	An $r$-cyclic straightening $\tau\mapsto x_{\tau}\in \tau$ determines the image of $g$ on vertices, thus
	\[
	g(\tau_0\subset\dots\subset\tau_{k-1}) = (x_{\tau_0},\dots,x_{\tau_{k-1}})
	\]
	where $\pi$ is the permutation that orders the values on the right. To check that it is an assemblage map,
	\[
	g_*\circ s_*([a_0,\dots,a_{k-1}]) = g\left(\sum_{\perm} (-1)^{\sign{\perm}} (a_{\perm(0)}|\dots|a_{\perm(k-1)})\right)
	\]
	vanishes in all summands except for the summand $(\omega_0 \subset \omega_1 \subset \dots \subset \omega_{k-1})$ defined as
	\begin{align*}
		\omega_{k-1} &= a
		&
		\omega_{i} &= \omega_{i+1}\smallsetminus \{g(\omega_{i+1})\}.
	\end{align*}
	The second assertion is evident. For the last one, if $\tau$ is a face of $\sd \partial \simplex^{r-1}$, then $\rho^{-1}(g(\Lambda(\tau))) = \rho^{-1}(x_{\tau^c})$ is the cyclic predecessor of $x_{\tau^c}$, which does not belong to $\tau^c$, and therefore belongs to $\tau$.
\end{proof}

\begin{example}
	If $r=3$, the only assemblage map with duality comes from the choice
	\begin{align*}
		[0]&\mapsto 0 & [0,1]&\mapsto [0]
	\end{align*}
\end{example}

\begin{example}\label{example:asymmetries5}
	If $r=5$, there are four assemblage maps with duality corresponding to the following choices:
	\begin{align*}
		[0]&\mapsto 0 & [0,1]&\mapsto 0 & [0,2]&\mapsto 0 & [0,1,2]&\mapsto 0 & [0,1,3] & \mapsto 0 & [0,1,2,3] & \mapsto 0 \\
		[0]&\mapsto 0 & [0,1]&\mapsto 0 & [0,2]&\mapsto 2 & [0,1,2]&\mapsto 0 & [0,1,3] & \mapsto 0 & [0,1,2,3] & \mapsto 0 \\
		[0]&\mapsto 0 & [0,1]&\mapsto 0 & [0,2]&\mapsto 0 & [0,1,2]&\mapsto 0 & [0,1,3] & \mapsto 3 & [0,1,2,3] & \mapsto 0 \\
		[0]&\mapsto 0 & [0,1]&\mapsto 0 & [0,2]&\mapsto 2 & [0,1,2]&\mapsto 0 & [0,1,3] & \mapsto 3 & [0,1,2,3] & \mapsto 0
	\end{align*}
\end{example}

If $r=7$ there are several thousands of assemblage maps with duality.

\begin{figure}
	\begin{tikzpicture}
		\draw (0,1.732) -- (1,0);
		\draw (-1,0) -- (1,0);
		\draw (-1,0) -- (0,1.732);
		\filldraw [gray] (-.5,.866) circle (2pt);
		\filldraw [gray] (.5,.866) circle (2pt);
		\filldraw [gray] (0,0) circle (2pt);
		\filldraw [black] (0,1.732) circle (2pt) node[anchor=south]{2};
		\filldraw [black] (1,0) circle (2pt) node[anchor=west]{1};
		\filldraw [black] (-1,0) circle (2pt) node[anchor=east]{0};
		\draw[->] (0,0) -- (-.5,0);
		\draw[->] (-.5,.866) -- (-.25,1.299);
		\draw[->] (.5,.866) -- (.75,.433);
	\end{tikzpicture}
	\caption{The $r$-cyclic assemblage map with duality for $\partial \simplex^2$.}
\end{figure}
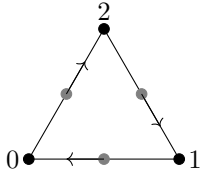

\begin{example}\label{example:f3_1}
	For $r=3$, and $a \ot b = [0] \ot [2,1]$ we have:
	\begin{align*}
		f([0] \ot [2,1])
		&= (-1)^{3\cdot 3}g_*\circ s_*^{\Psd}\circ h_*\circ \twist([0] \ot [2,1])
		\\
		&= -g_*\circ s_*^{\Psd}\circ h_*([2,1] \ot [0])
		\\
		&= g_*\circ s_*^{\Psd}\circ h_*([1,2] \ot [0])
		\\
		&= g_*\circ s_*^{\Psd}((0) \ot [0])
		\\
		&= -g_*((0))
		\\
		&= -[0]
	\end{align*}
	Observe that, since the summand $[1,2]$ appears with positive sign in $\partial \sigma = \partial([0,1,2])$, we have that $f([0] \ot \partial\sigma) = [0]$, as dictated by Condition \eqref{cond:1}.
\end{example}

\begin{example}\label{example:f3_2}
	For $r=3$ and $a \ot b = [0,1] \ot [2,0]$, we have:
	\begin{align*}
		f([0,1] \ot [2,0]) &= (-1)^{3\cdot 4}g_*\circ s_*^{\Psd}\circ h_*\circ \twist([0,1] \ot [2,0])
		\\
		&= g_*\circ s_*^{\Psd}\circ h_*([2,0] \ot [0,1])
		\\
		&= -g_*\circ s_*^{\Psd}\circ h_*([0,2] \ot [0,1])
		\\
		&= g_*\circ s_*^{\Psd}((1) \ot [0,1])
		\\
		&= -g_*\circ s_*^{\Psd}((1) \ot [1,0])
		\\
		&= -g_*((1 \subset 01))
		\\
		&= -[1,0]
		\\
		&= [0,1]
	\end{align*}
	Observe that this exhibits Condition \eqref{cond:1} and Condition \eqref{cond:2}:
	\begin{align*}
		f([0,1] \ot \partial\sigma) &= [0,1]
		&
		f(\partial\sigma \ot [2,0]) &= \rho[2,0].
	\end{align*}
\end{example}

\begin{example}\label{example:f5_1}
	For $r=5$ and the first $r$-cyclic straightening of Example \ref{example:asymmetries5}, if $a \ot b = [2,3,0] \ot [4,1,3]$, we have:
	\begin{align*}
		f([2,3,0] \ot [4,1,3]) &= (-1)^{5\cdot 6}g_*\circ s_*^{\Psd}\circ h_*\circ \twist([2,3,0] \ot [4,1,3])
		\\
		&= -g_*\circ s_*^{\Psd}\circ h_*([4,1,3] \ot [2,3,0])
		\\
		&= g_*\circ s_*^{\Psd}((0,2) \ot [2,3,0])
		\\
		&= g_*\circ s_*^{\Psd}((0,2) \ot [0,2,3])
		\\
		&= g_*((02 \subset 023))
		\\
		&= [0,2]
	\end{align*}
\end{example}

\begin{example}\label{example:f5_2}
	For $r=5$ and the first $r$-cyclic straightening of Example \ref{example:asymmetries5}, if $a \ot b = [1,2,3,4] \ot [0,1,2]$ we have:
	\begin{align*}
		f([1,2,3,4] \ot [0,1,2])
		&= (-1)^{5\cdot 7}g_*\circ s_*^{\Psd}\circ h_*\circ \twist([1,2,3,4] \ot [0,1,2])
		\\
		&= -g_*\circ s_*^{\Psd}\circ h_*([0,1,2] \ot [1,2,3,4])
		\\
		&= -g_*\circ s_*^{\Psd}((3,4) \ot [1,2,3,4])
		\\
		&= -g_*\circ s_*^{\Psd}((3,4) \ot [3,4,1,2])
		\\
		&= g_*((34 \subset 134 \subset 1234)) - g_*((34 \subset 234 \subset 1234))
		\\
		&= [3,3,1] - [3,2,1]
		\\
		&= [1,2,3]
	\end{align*}
	Observe that this exhibits Condition \eqref{cond:2}: $f(\partial\sigma \ot [0,1,2] = [1,2,3])$.
\end{example}


\subsection{The bar resolution}\label{ss:milnor}
There is an inclusion $\chains(\EC_r) \to \Omega_*(r)^{\nf}$ that sends a tuple $(a_0,a_1,\dots,a_k)$ to the pieced word $(a_0|a_1|\dots|a_k)$. The composition
\[
\chains(\EC_r) \lra \Omega_*(r)^{\nf} \lra \rW(r)
\]
is an equivariant chain map. This composition has a description analogous to the one used to define $\Psiom$, using the following suspension map:
\[
S(a_0,\dots,a_k) = \begin{cases}
	(-1)^{\sign{\perm}} (a_{0},\dots,a_{k-r+1}) & \text{ if $k \geq r-1$} \\
	\emptyset & \text{if $k = r-2$ and $a_i\neq a_j$ for all $i,j$} \\
	0 & \text{otherwise.}
\end{cases}
\]
where $\pi$ is the permutation that orders $(a_{k-r+1},\dots,a_{k})$. The image of $e_{-q}^{\vee}$ is the sum of all words $A = (a_0,\dots,a_{q-1})\in \cochains[-q] \EC_r$ such that
\begin{itemize}
\item none of the subwords of the form $(a_{q-1-\ell(r-1)},a_{q-1-(\ell-1)(r-1))}$ of length $r$ contains repeated elements,
\item the only subword of the form $(a_0,\dots,a_{q-1-\ell(r-1)})$ of length smaller than $r$ becomes, after having been ordered, an increasing sequence that alternates even and odd entries different from $r-1$, starting with an even entry.
\end{itemize}
each word has the sign needed to order each of the subwords mentioned above, and the coefficient $\frac{\varphi(m)!}{(\tilde{r}!)^j}$ where $m$ is the length of the last of the subwords mentioned, and $j$ is the total number of subwords and $\varphi(m) = \lfloor\frac{r-m-1}{2}\rfloor$.
\begin{example} If $r=3$,
\begin{align*}
	\Psi^\vee(e_{-1}^\vee) &= (0)
	\\
	\Psi^\vee(e_{-2}^{\vee}) &= (0,1)-(1,0)
	\\
	\Psi^\vee(e_{-3}^{\vee}) &= (0,1,2) - (0,2,1)
\\
	\Psi^\vee(e_{-4}^{\vee}) &= (0,1,2,0) - (0,1,0,2) + (1,0,2,1) - (1,0,1,2)
\end{align*}
if $r= 5$,
\begin{align*}
	\Psi^\vee(e^\vee_{-1}) &= \frac{1}{2}\left((0) + (2)\right)
	\\
	\Psi^\vee(e^\vee_{-2}) &= \frac{1}{2}\left((0,1) - (1,0) + (0,3) - (3,0) + (2,3) + (3,2)\right)
	\\
	\Psi^\vee(e^\vee_{-3}) &= \frac{1}{2}\left((0,1,2) -(0,2,1) + (1,2,0) - (1,0,2) + (2,0,1) - (2,1,0)\right)
\end{align*}
$\Psi^\vee(e^\vee_{-4})$ is the signed sum of all permutations of $(0,1,2,3)$, so it has $24$ summands, again with coefficient $\frac{1}{2}$.
\end{example}

This map is different from the one obtained in \cite[Prop.~6.16]{brumfiel2023explicit}, and in fact that map cannot be used to construct $\Psiom$ following the recipy of Definition \ref{def:psiom}.

\subsection{The even prime}\label{ss:even} If $r= 2$ and $R=\bF_2$, then $\Omega_*(r)^{\nf}$ is isomorphic to the unnormalized chain complex of $\EC_2$, while $\rW(r)$ is isomorphic to the normalized chain complex of $\EC_2$. The quotient map $\uchains(\EC_2) \to \chains(\EC_2)$ induces therefore a map
\[
\Omega_*(r)^{\nf} \to \rW(r)
\]
that satisfies the conditions of Lemma \ref{lemma:omegar}.
\begin{remark}
	The combinatorics in pages 13 and 14 of \cite{medina2021fast_sq} translate here to the fact that the map from the unnormalized chains to the normalized chains is well-defined, i.e., to the fact that the degenerate simplices form a subcomplex of the unnormalized chain complex.
\end{remark}

\section{Suspension}\label{s:suspension}

In this section, we show that our connected diagonals yield nice connected diagonals on the chain complex of a simplicial $\susp{}$-spectrum or a simplicial $\sus{}$-spectrum. The unstable diagonals of \cite{medina2021may_st} yield nice stable diagonals on the chain complex of a simplicial $\sus{}$-spectrum. At the end we give some comments on other stable categories. We have chosen to work with semi-simplicial pointed sets, but one may very well work with simplicial pointed sets using the Kan suspension instead.

\subsection{Suspension and spectra} The \emph{left suspension functor} sends an augmented simplicial pointed set $X_\bullet$ to a simplicial pointed set $(\Sigma X)_\bullet$ that satisfies the following (as before, we denote by $L'(X)$ the semi-simplicial object that is obtained from $L(X)$ by quotienting by the degeneracies):
\begin{align*}
 \abel'(\susp{} X)_n &\cong \abel'(X)_{n-1},& \d_i(\susp{}\sigma) &= \susp{} \d_{i+1}(\sigma),& d_{0}(\susp{}\sigma) &= *.
\end{align*}
There is also a \emph{right suspension functor} satisfying the following:
\begin{align*}
 \abel'(\sus{} X)_n &= \abel'(X)_{n-1}, &\d_i(\sus{}\sigma) &= \sus{} \d_i(\sigma),& d_{n+1}(\sus{} \sigma) &= *.
\end{align*}
We also have that, for an augmented semi-simplicial pointed set $X_\bullet$ in $\cC$,
\begin{align*}
 \cadenas(\Sigma X)&\cong \susp{} \cadenas(X) & \cadenas(\sus{}X) &\cong \sus{}\cadenas(X)
\end{align*}

An $\susp{}$-spectrum is a sequence of (augmented) semi-simplicial pointed sets $E = \{E_m\}_{m \geq 0}$ with structural maps $\susp{} E_m \to E_{m+1}$. An $\sus{}$-spectrum is a sequence of (augmented) semi-simplicial pointed sets $\{E_m\}_{m \geq 0}$ with structural maps $\sus{} E_m \to E_{m+1}$.

The chain complex of a $\susp{}$-spectrum is the colimit
\[
 \chains(E) = \colim_m \susp{-m} \chains(E_m) \to \susp{-m-1} \chains(E_{m+1})
\]
The chain complex of a $\sus{}$-spectrum is the colimit
\[
 \chains(E) = \colim_m \sus{-m} \chains(E_m) \to \sus{-m-1} \chains(E_{m+1})
\]
These chain complexes compute the ordinary homology of the spectrum $E$ and come equipped with a stable $\mathbb{E}_\infty$ structure \cite{Gill2020}. The stable $\mathbb{E}_\infty$ operad has two well-known models \cite[Appendix]{berger2004combinatorial}: the stable Barratt-Eccles operad or the stable surjection operad, and are defined as the limit of the suspension endomorphism of the Barratt Eccles operad $\cE$ or the surjection operad $\chi$. In arity $r$:
\[\cE_{st} = \lim( \Sigma^2\cE(r)[2-2r] \lra \Sigma \cE(r)[1-r] \lra \cE(r).\]
\[\chi_{st} = \lim( \Sigma^2\chi(r)[2-2r] \lra \Sigma \chi(r)[1-r] \lra \chi(r).\]
Now, these models are clearly not finitely generated (nor free, nor of finite type), so even if the spectrum of which we are taking spectral chains is finite, the model of the action is not finite.

\subsection{A connected $r$-cyclic diagonal for spectra}

Consider the following diagram, where the vertical maps are chain maps of degree $r$ with respect to the right suspension (i.e., they commute with the differential)
\[
\xymatrix{
	\Wd\hotimes \rchains(X)\ar[r]^{\mu}\ar[d]^{\id\otimes \sus{r}} & \chains(X)^{\otimes r} \ar[d]^{\sus{\otimes r}}
	\\
	\Wd\hotimes \rchains(\sus{} X)\ar[r]^{\mu} & \chains(\sus{}X)^{\otimes r}	
}
\]
The left vertical map sends $e_{-q}^\dd\hotimes \tau$ to $e_{-q}^\dd\hotimes \sus{}\tau$, and notice that $\binom{r}{2}(n+1) \equiv \sum_s s(n+1)$. The right vertical map sends $\tau_0\otimes \ldots\otimes \tau_{r-1}$ to $\sus{}\tau_0\otimes \ldots\otimes \sus{}\tau_{r-1}$ with the sign $(-1)^{\sum_s s|\tau_{s}|}$. 
\begin{lemma} The lower composition equals $(-1)^{q+\binom{r}{2}(n+1)}\tilde{r}!$ times the upper composition.
\end{lemma}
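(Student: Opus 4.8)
The plan is to reduce the identity to a single generator and then track how the right suspension interacts with each of the four factors in $\mu=\alpha\circ\beta\circ\gamma\circ\Psi$ of Theorem \ref{thm2:mainthm}. By bilinearity and naturality it suffices to evaluate both routes on a generator $e_{-q}^{\vee}\hotimes\vec{\tau}$ with $\tau$ a nondegenerate $n$-simplex, regarded as an element of $\rA(X)_n$, which is concentrated in degree $n+1$. Under $\Psi$ it goes to $(\Psi^n_q)^{\vee}(e_{-q}^{\vee})\ot\Delta(\tau)$, where $\Delta(\tau)\in\rA(X)_n^{\ot r}$ is a sum of tensors all of whose $r$ factors lie in the single degree $n+1$; this is the place where the observation $\binom{r}{2}(n+1)\equiv\sum_s s(n+1)$ will be used.

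First I would isolate the contribution of $\Psi$. By Lemma \ref{lemma:omegar} one has $\Psi^n=(\tilde{r}!)^{n+1}\,\Psi\circ\eta^n$, and $\eta^n$ multiplies a pair of degree $m$ by $(-1)^{m(n+1)}$; dualizing, $(\Psi^n_q)^{\vee}$ carries the scalar $(\tilde{r}!)^{n+1}$ and the sign $(-1)^{q(n+1)}$, with the rest of its combinatorics independent of $n$. Since $\sus{}\tau$ lies in $\rA(\sus{}X)_{n+1}$ and so has degree $n+2$, replacing $X$ by $\sus{}X$ replaces $\Psi^n$ by $\Psi^{n+1}=(\tilde{r}!)^{n+2}\,\Psi\circ\eta^{n+1}$, whose dual differs from that of $\Psi^n$ by the scalar $\tilde{r}!$ and the sign $(-1)^{q(n+2)-q(n+1)}=(-1)^q$. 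The next two factors $\gamma$ and $\beta$ are either sign-free ($\gamma$) or carry a sign depending only on the word $A$ and not on $n$ ($\beta$, hence $\beta^{\vee}$), so they intertwine the two routes strictly and reduce the whole comparison to one taking place at the level of $\Omd(r,\bullet)\ot_{\asimplex}\rA(-)^{\ot r}_{\bullet}$, where the diagonal $\Delta$ is set-theoretic.

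Next I would settle the contribution of $\alpha$ together with the outer $\sus{}^{\ot r}$. On the level of (pointed) sets the diagonal commutes with the right suspension of $X$ componentwise, so $\Delta(\sus{}\tau)=\sus{}^{\ot r}\Delta(\tau)$ up to sign; applying $\abel$ and moving the single new suspension operator into each of the $r$ tensor factors, all of degree $n+1$, costs the Koszul sign $(-1)^{\sum_s s(n+1)}=(-1)^{\binom{r}{2}(n+1)}$, which is exactly the sign produced by the right vertical map $\sus{}^{\ot r}$ (its sign $(-1)^{\sum_s s|\tau_s|}$, evaluated when every $|\tau_s|=n+1$). One must then check that the terms of $\mu(e_{-q}^{\vee}\hotimes\vec{\sus{}\tau})$ in which some tensor factor avoids the new vertex vanish, being $d_{n+1}$-faces of $\sus{}\tau$ and hence the basepoint by $d_{n+1}(\sus{}\sigma)=*$; the surviving terms are then exactly the componentwise suspensions of the terms of $\mu(e_{-q}^{\vee}\hotimes\vec{\tau})$. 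Running all of this through the $n$-dependent signs of Lemmas \ref{lemma:2} and \ref{lemma:3} that assemble $\alpha$ --- namely $(-1)^{(n+1)m}$ in each of the $r$ copies of the map of Lemma \ref{lemma:2} and $(-1)^{(n+1)\sum_k k m_k}$ in the map of Lemma \ref{lemma:3} --- one finds that, beyond the factor $\tilde{r}!\,(-1)^q$ already produced by $\Psi$, the net discrepancy between the two routes is precisely $(-1)^{\binom{r}{2}(n+1)}$. Multiplying the contributions yields the claimed factor $(-1)^{q+\binom{r}{2}(n+1)}\tilde{r}!$.

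The hard part will be the sign bookkeeping in this last step: the construction of $\alpha$ alternates between the right- and left-suspension conventions (cf.\ Lemma \ref{lemma:2}), so one must be careful about which degree each suspension operator is commuted past, bearing in mind that on the lower route every such degree has been raised by one. The cleanest organization is to perform the entire comparison inside $\Omd(r,\bullet)\ot_{\asimplex}\rA(-)^{\ot r}_{\bullet}$, where all $r$ tensor factors sit in the single degree $n+1$ and the only new phenomenon --- the appearance of the suspension vertex --- is governed by the single relation $d_{n+1}(\sus{}\sigma)=*$; once that reduction is in place every remaining sign is either $n$-independent or cancels between the two routes.
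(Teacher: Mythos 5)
Your overall strategy mirrors the paper's: identify the $n$-dependent signs in $\mu=\alpha\circ\beta\circ\gamma\circ\Psi$, match summands between the two routes via $d_{n+1}(\sus{}\sigma)=*$, and track the changes. The $\Psi$ part ($\tilde{r}!$ from $(\tilde{r}!)^{n+1}$, $(-1)^q$ from $\eta$) is handled correctly. But there is a genuine gap in your account of Lemma~\ref{lemma:2}.

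You list the $n$-dependent sign of the map in Lemma~\ref{lemma:2} as only $(-1)^{(n+1)m}$. The full sign is $(-1)^{\lambda(U^c,U)+(n+1)m}$, and the $\lambda(U^c,U)$ term is \emph{also} $n$-dependent: the complement $U^c$ is taken in $\{0,\ldots,n\}$ in the upper route and in $\{0,\ldots,n+1\}$ in the lower route. Adjoining the top vertex $n+1$ to $U^c$ creates $|U|$ new inversions in the concatenation $U^c,U$ (every element of $U$ is below $n+1$), so for the $k$-th tensor factor $\lambda$ changes by $q_k$, and over all $r$ factors the total change is $(-1)^q$. Without this term your bookkeeping does not close: by your own list, the change from Lemma~\ref{lemma:2}'s $(-1)^{(n+1)m}$ signs is $(-1)^q$, the change from Lemma~\ref{lemma:3} is $(-1)^{\sum_k k m_k}$, and the vertical map $\sus{}^{\ot r}$ contributes $(-1)^{\sum_s s((n+1)-m_s)}=(-1)^{\binom{r}{2}(n+1)-\sum_k k m_k}$; multiplying these yields $(-1)^{q+\binom{r}{2}(n+1)}$, not $(-1)^{\binom{r}{2}(n+1)}$ as you assert. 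Combined with the $\Psi$ factor $\tilde{r}!\,(-1)^q$ you would then obtain $\tilde{r}!\,(-1)^{\binom{r}{2}(n+1)}$, missing a sign of $(-1)^q$. The extra $(-1)^q$ from the $\lambda$-change is exactly what restores the claimed $(-1)^{q+\binom{r}{2}(n+1)}\tilde{r}!$. The paper handles this by first combining the Lemma~\ref{lemma:2} sign with the $\eta$ sign to leave precisely $\lambda(U^c,U)$, and then pointing to the enlargement of the ambient set as the source of the residual $(-1)^q$; your argument needs the same observation, spelled out, to close.
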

\begin{proof}
The summands in the image of both horizontal maps are obtained from the same pairs $(U,A)$, and the signs associated to this pair are all independent of the dimension of $\tau$ except for:
\begin{enumerate}
	\item the sign of Lemma \ref{lemma:2},
	\item the sign of Lemma \ref{lemma:3},
	\item the sign of $\eta$ given in equation \ref{eq:eta},
	\item the factor $(\tilde{r}!)^{n+1}$ of Lemma \ref{lemma:omegar}.
\end{enumerate}
The sign of Lemma \ref{lemma:3} equals the product of the sign of the right vertical maps and $(-1)^{\binom{r}{2}(n+1)}$. The sign of Lemma \ref{lemma:2} together with the sign of $\eta$ simplifies to the sign $\lambda(U^c,U)$. The difference between the two horizontal arrows is then that the complement $U^c$ in the upper arrow is computed in $\{0,1,\ldots,n\}$ while in the lower arrow is computed in $\{0,1,\ldots,n+1\}$. The difference yields that the diagram commutes up to $(-1)^q(\tilde{r}!)$. 
\end{proof}

Consider now the following diagram, where the vertical maps are chain maps of degree $r$ with respect to the left suspension (i.e., they anticommute with the differential)
\[
\xymatrix{
	\Wd\hotimes \rchains(X)\ar[r]^{\mu}\ar[d]^{\id\otimes \susp{r}} & \chains(X)^{\otimes r} \ar[d]^{\rho\susp{\otimes r}}
	\\
	\Wd\hotimes \rchains(\susp{} X)\ar[r]^{\mu} & \chains(\susp{}X)^{\otimes r}	
}
\]
The left vertical map sends $e_{-q}^\dd\hotimes \tau$ to $(-1)^{q}e_{-q}^\dd\hotimes \susp{}\tau$. The right vertical map sends $\tau_0\otimes \ldots\otimes \tau_{r-1}$ to $\susp{}\tau_{r-1}\otimes \susp{}\tau_0\ldots\otimes \susp{}\tau_{r-2}$ with the sign needed to reorder the $\tau$'s and to move an operator of degree $1$ from the left to each factor. 
\begin{lemma}
	The lower composition equals $(-1)^{\binom{r}{2}(n+1)}\tilde{r}!$ times the upper composition.
\end{lemma}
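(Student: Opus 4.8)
The plan is to run the argument in close parallel with the preceding lemma, tracking only those signs in the composite $\mu=\alpha\circ\beta\circ\gamma\circ\Psi$ that depend on the dimension $n$ of $\tau$. As in that proof, a nonzero summand of $\mu(e_{-q}^\dd\hotimes\vec\tau)$ is indexed by a pair $(U,A)\in\Om(r,n)$, and the coefficient it carries is independent of $n$ apart from four contributions: the sign of \cref{lemma:2}, the sign of \cref{lemma:3}, the sign of $\eta^n$ in \eqref{eq:eta}, and the factor $(\tilde r!)^{n+1}$ supplied by \cref{lemma:omegar}. The differences with the right-suspension case are two. First, every occurrence of the right suspension $\sus{}$ in $\Psi,\gamma,\beta,\alpha$ and in \cref{lemma:2,lemma:3} is now replaced by the left suspension $\susp{}$, so the suspension operators anticommute with the differential and carry the left-suspension Koszul signs of \cref{s:preliminaries}. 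Second, the right vertical map carries the cyclic permutation $\rho$ in addition to a reordering sign; that $\rho$ is exactly the twist forced by the $\Cyc_r$-equivariance of $\mu$ together with the fact that $\susp{}$ inserts the new simplicial coordinate on the side of the tensor factors opposite to $\sus{}$, and concretely I would identify it as the one induced by transporting $\susp{}$ through the $\Cyc_r$-equivariant isomorphisms $\Omd(r,\bullet)\xra{\gamma^\dd}\Omhatd(r,\bullet)\xra{\beta^\dd}\cochains(\asimplex^\bullet)^{\ot r}$.

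First I would isolate the sign from \cref{lemma:3}: just as in the preceding lemma, suspending $\tau$ changes this sign by the reordering-and-operator-moving sign defining the right vertical map $\rho\susp{\otimes r}$, times the extra factor $(-1)^{\binom{r}{2}(n+1)}$, using the equality $\binom{r}{2}(n+1)=\sum_{s=0}^{r-1}s(n+1)$ noted above. Second, I would combine the sign of \cref{lemma:2} with the sign of $\eta^n$; these collapse to the single sign $(-1)^{\lambda(U^c,U)}$, so the only residual $n$-dependence is that the complement $U^c$ is formed inside $\{0,1,\dots,n\}$ along the top row and inside $\{0,1,\dots,n+1\}$ along the bottom row. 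Evaluating this difference of complements produces a factor $(-1)^q$, which cancels against the sign $(-1)^q$ built by definition into the left vertical map $\id\otimes\susp{r}$; this cancellation is precisely why no $q$-dependence survives, in contrast with the right-suspension statement. Third, the factor $(\tilde r!)^{n+1}$ becomes $(\tilde r!)^{n+2}$ when $n$ is increased by one, leaving a single factor $\tilde r!$. Multiplying the surviving contributions gives the asserted constant $(-1)^{\binom{r}{2}(n+1)}\tilde r!$.

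The main obstacle is bookkeeping: one must re-derive each of the signs in the definitions of $\Psi,\gamma,\beta,\alpha$ under the left-suspension conventions of \cref{s:preliminaries}, where suspension operators anticommute with differentials and carry different Koszul signs than in the right-suspension setting, and one must check that the twist appearing on the right vertical map really is the cyclic rotation $\rho$ and not some other permutation of the tensor factors. Once the twist is pinned down, the computation reduces term by term to the one already performed in the preceding lemma, and no genuinely new identity is needed beyond $\binom{r}{2}(n+1)=\sum_{s}s(n+1)$.
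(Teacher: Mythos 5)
Your overall architecture is right — you identify the same four $n$-dependent contributions as in the preceding lemma (the signs of \cref{lemma:2}, \cref{lemma:3}, $\eta^n$, and the factor $(\tilde r!)^{n+1}$), and the final constant you produce agrees with the statement. But your account of how the $(-1)^q$ is absorbed is not correct, because you are silently reusing the right-suspension bijection $(U,A)\leftrightarrow(U,A)$ in a situation where it does not hold. For the left suspension one has $\d_i(\susp{}\sigma)=\susp{}\d_{i+1}(\sigma)$ and $\d_0(\susp{}\sigma)=*$, so a nonzero summand of $\mu(e^\dd_{-q}\hotimes\susp{}\tau)$ is labelled by a pair $(U',A)$ with $0\notin U'$, i.e. by $U'=U+1$ with $U\subset\{0,\dots,n\}$; the matching between the two rows is $(U,A)\leftrightarrow(U+1,A)$, not $(U,A)\leftrightarrow(U,A)$. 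With the correct bijection the complement of $U'$ in $\{0,\dots,n+1\}$ is $\{0\}\cup(U^c+1)$, and one checks that $\lambda(U^c,U)\equiv\lambda((U')^c,U')$ — there is no $(-1)^q$ discrepancy coming from the $\lambda$-sign at all. Your claimed $(-1)^q$ from ``the complement formed in $\{0,\dots,n\}$ versus $\{0,\dots,n+1\}$'' is a computation for the same $U$ sitting in a larger ambient set, which is exactly the right-suspension mechanism and does not apply here.

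Consequently your proposed cancellation is misplaced: the $(-1)^q$ carried by $\id\otimes\susp{r}$ does not cancel against a $\lambda$-sign, but must instead be absorbed together with the sign of $\rho\susp{\ot r}$ into the comparison of \cref{lemma:3} signs. That comparison is also not literally ``as in the preceding lemma,'' because the shift $U\mapsto U+1$ induces a cyclic rotation of the blocks $U_A^k$, so the degrees $m_k=|U_A^k|$ in the two rows are a cyclic permutation of one another rather than equal; this is precisely what pairs up with the extra $\rho$ in the right vertical map. Your closing remark that ``no genuinely new identity is needed beyond $\binom{r}{2}(n+1)=\sum_s s(n+1)$'' is therefore too strong — the identification of the bijection $U\leftrightarrow U+1$ and the observation $\lambda(U^c,U)\equiv\lambda((U')^c,U')$ are essential new ingredients that your argument replaces by an incorrect carry-over from the right-suspension case.
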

\begin{proof}
	The argument is as before, but observing that there is a bijection between the terms labeled by $(U,A) = ((u_0,\ldots,u_{q-1}),(a_0,\ldots,a_{q-1}))$ above and those labeled by $(U',A) = ((u_0+1,\ldots,u_{q-1}+1),(a_0,\ldots,a_{q-1}))$ below. Again, the two vertical signs account for the difference of the signs obtained in the upper and lower horizontal arrows for Lemma \ref{lemma:3}. In this case, $\lambda(U^c,U) \equiv \lambda((U')^c,U')$, hence the diagram commutes. 
\end{proof}
From these two lemmas we deduce:
\begin{proposition}\label{prop:suspensionconnected}
 The effect of left and right suspensions on the $r$-cyclic connected diagonals defined in Corollary \ref{thm2:mainthm} is the following:
 \begin{align*}
  \mu(e_{-q}^\dd \ot \sus{}\tau) &= (-1)^{q + \binom{r}{2}(n+1)}\tilde{r}!\sus{\ot r}\mu(e_{-q}^\dd \ot \tau)
  \\
 \mu(e_{-q}^\dd \ot \susp{}\tau) &= (-1)^{q + \binom{r}{2}(n+1)}\tilde{r}!\susp{\ot r}\rho (\mu(e_{-q}^\dd \ot \tau) 
 \end{align*}
\end{proposition}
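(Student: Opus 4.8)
The plan is to read both identities off the two lemmas just established, applied to the generator $e_{-q}^\dd \ot \vec{\tau}$ with $\tau \in \rA(X)_n$; since then $\sus{}\tau$ and $\susp{}\tau$ have degree $n+1$, this is exactly what produces the exponents $\binom{r}{2}(n+1)$. By linearity it suffices to treat such generators, and no geometric input beyond the two lemmas is needed: the whole task is to reconcile the ad hoc descriptions of the vertical maps in the two commuting squares with the notation $\sus{\ot r}$ and $\susp{\ot r}\rho$ of the statement, and to keep track of Koszul signs.

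For the first identity I would start from the right-suspension square. Its left vertical map sends $e_{-q}^\dd \ot \tau$ to $e_{-q}^\dd \ot \sus{}\tau$ with no sign, so evaluated on $e_{-q}^\dd \ot \tau$ the lower composite is $\mu(e_{-q}^\dd \ot \sus{}\tau)$; the lemma identifies this with $(-1)^{q+\binom{r}{2}(n+1)}\tilde{r}!$ times the upper composite, i.e.\ times the right vertical map applied to $\mu(e_{-q}^\dd \ot \tau)$. It then remains to recognise that right vertical map as $\sus{\ot r}$: iterating the rule $(f \ot g)(a \ot b) = (-1)^{|f||b|} f(a) \ot g(b)$ for the $r$-fold tensor power of the degree-one right-suspension map gives $\sus{\ot r}(\tau_0 \ot \dots \ot \tau_{r-1}) = (-1)^{\sum_s s|\tau_s|}\,\sus{}\tau_0 \ot \dots \ot \sus{}\tau_{r-1}$, which is exactly the sign prescribed in the lemma. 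Substituting yields the first displayed formula.

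For the second identity I would argue the same way from the left-suspension square, the two differences being that its left vertical map carries an extra sign $(-1)^q$, so that the lower composite on $e_{-q}^\dd \ot \tau$ is $(-1)^q\mu(e_{-q}^\dd \ot \susp{}\tau)$, and that the right vertical map cyclically permutes the tensor factors in addition to applying a degree-one operator to each. That right vertical map is $\rho$ composed with $\susp{\ot r}$; the two possible orders agree on chains because $\rho$ is compatible with the linearization by the very definition of a linearized category with $r$-fold diagonals, so we may write it as $\susp{\ot r}\rho$. The lemma then reads $(-1)^q\mu(e_{-q}^\dd \ot \susp{}\tau) = (-1)^{\binom{r}{2}(n+1)}\tilde{r}!\,\susp{\ot r}\rho\,\mu(e_{-q}^\dd \ot \tau)$, and transposing the factor $(-1)^q$ produces the second displayed formula.

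I expect the only real obstacle to be this sign bookkeeping: matching the vertical maps of the two squares with $\sus{\ot r}$ and $\susp{\ot r}\rho$ and checking the $(-1)^q$ normalisation of the left-hand vertical map, since the two suspension conventions of \cref{s:preliminaries} generate Koszul signs in genuinely different ways. Once those are pinned down the proposition follows immediately from the two lemmas.
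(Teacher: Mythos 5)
Your overall plan is exactly right, and it is exactly what the paper intends: the proposition is presented immediately after the two commuting-square lemmas with the single phrase ``From these two lemmas we deduce,'' so the task is precisely the bookkeeping you describe, namely unwinding the two squares on the generator $e_{-q}^\dd \ot \vec{\tau}$ and identifying the right vertical maps. Your treatment of the right-suspension square is correct, including the identification of the right vertical map with $\sus{\ot r}$ via the Koszul sign $(-1)^{\sum_s s\bars{\tau_s}}$ coming from iterating $(f\ot g)(a\ot b)=(-1)^{|f||b|}f(a)\ot g(b)$, and your transposition of the $(-1)^q$ in the left-suspension square is correct as well.

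The one flawed step is your justification that $\rho\circ\susp{\ot r}=\susp{\ot r}\circ\rho$. This has nothing to do with the compatibility of $\rho$ with the linearization functor in the definition of a linearized category with $r$-fold diagonals; that axiom only says $L$ intertwines the categorical $\rho$ with the permutation of tensor factors of modules, and says nothing about commuting with suspensions. The equality you need is a pure Koszul-sign computation: since $\susp{}$ raises degree by one in each factor, cyclically permuting before versus after suspending changes the sign of the cyclic permutation, and a direct count shows $\rho\circ\susp{\ot r}=(-1)^{r-1}\susp{\ot r}\circ\rho$. This is an honest identity only when $r$ is odd (the case the proposition is actually asserting) and is of course harmless for $r=2$ over $\bF_2$. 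Your final formula is therefore correct, but the reason it holds is this parity of $r-1$, not the linearization axiom; as written the step is unjustified and would read as wrong to a referee even though the conclusion is true.
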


\begin{corollary}
 On the stable chains of a $\sus{}$-spectrum $\{E_m\}_{m \geq 0}$ we can define the following connected diagonal $\mu_{\mathrm{sp}}$ (which restricts to a stable diagonal):
 \[
 \mu_{\mathrm{sp}}(e_{-q}^\dd \hotimes \tau) = (-1)^{mq+\binom{r}{2}\left(m(m+n-1) + \binom{m+1}{2}\right)} \frac{1}{(\tilde{r}!)^m}\mu(e_{-q}^\dd \hotimes \tau)
 \]
 where $\tau\in\chains(E_m)$.
\end{corollary}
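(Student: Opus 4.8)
The plan is to realise $\mu_{\mathrm{sp}}$ as a colimit, over $m$, of suitably rescaled copies of the natural connected diagonal $\mu_m\colon \rWd(r)\hotimes\rchains(E_m)\to\chains(E_m)^{\ot r}$ produced by \cref{thm2:mainthm} from a fixed $r$-cyclic straightening with duality. First I would record the two colimit identifications this uses. Since tensor products commute with filtered colimits and the diagonal $\mathbb{N}\hookrightarrow\mathbb{N}^r$ is cofinal, one has $\chains(E)^{\ot r}\cong\colim_m\sus{-rm}\chains(E_m)^{\ot r}$, the transition maps being obtained from the structural maps $s_m\colon\sus{}E_m\to E_{m+1}$ by applying the $r$-fold right suspension with its shuffle sign (the right-hand vertical map in the square for the right suspension displayed above) together with the identification $(\sus{-m}A)^{\ot r}\cong\sus{-rm}A^{\ot r}$. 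Unwinding the reindexing $\rchains(-)=\bigoplus_k\sus{(r-1)k}\chains[k](-)$ (so that one simplicial suspension is $\sus{r}$ on $\rchains$) gives likewise $\rWd(r)\hotimes\rchains(E)\cong\colim_m\sus{-rm}\bigl(\rWd(r)\hotimes\rchains(E_m)\bigr)$, whose transition maps are induced by $e^\dd_{-q}\hotimes\tau\mapsto e^\dd_{-q}\hotimes\sus{}\tau$ and carry no extra sign, because $\sus{}$ is the identity on underlying chain maps. Hence it is enough to produce a family $\{\mu_m^{\mathrm{sp}}\}$ of maps compatible with these transitions.

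I would then set $\mu_m^{\mathrm{sp}} = (-1)^{\epsilon(m,n)}(\tilde{r}!)^{-m}\mu_m$ on the part of the source indexed by a simplex of dimension $n$, with $\epsilon(m,n) = mq + \binom{r}{2}\bigl(m(m+n-1) + \binom{m+1}{2}\bigr)$; each $\mu_m^{\mathrm{sp}}$ is a $\Cyc_r$-equivariant chain map because $\mu_m$ is and rescaling by a central unit changes nothing. The heart of the argument is commutativity of the square relating stage $m$ to stage $m+1$. Applying naturality of $\mu$ from \cref{thm2:mainthm} along $s_m\colon\sus{}E_m\to E_{m+1}$ reduces it to the case $E_{m+1}=\sus{}E_m$, where it is precisely the right-suspension identity of \cref{prop:suspensionconnected}: moving up one level multiplies the diagonal by $(-1)^{q+\binom{r}{2}(n+1)}\tilde{r}!$ and replaces $n$ by $n+1$. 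The factor $\tilde{r}!$ is absorbed by the $(\tilde{r}!)^{-(m+1)}$ in the normalisation, and what remains is a congruence modulo $2$ between $\epsilon(m,n)$, $\epsilon(m+1,n+1)$, the exponent $q+\binom{r}{2}(n+1)$, and the Koszul sign produced by identifying $\chains(E)^{\ot r}$ with the target colimit (which, since $\mu$ preserves degree, depends only on $m$, $q$, $n$). A short computation gives $\epsilon(m+1,n+1)-\epsilon(m,n)\equiv q + \binom{r}{2}n\pmod 2$, so the residual identity is a routine sign check; the point of the quadratic term in $\epsilon$ is exactly that it makes these level-by-level corrections telescope with no cumulative error.

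Granting this, $\mu_{\mathrm{sp}}:=\colim_m\mu_m^{\mathrm{sp}}$ is a well-defined $\Cyc_r$-equivariant chain map $\rWd(r)\hotimes\rchains(E)\to\chains(E)^{\ot r}$, hence a connected $r$-diagonal, and tracing a representative $\tau\in\chains(E_m)$ through the construction recovers the displayed formula; precomposing with the epimorphism $\varphi\colon\Wst(r)\ot\chains(E)\to\rWd(r)\hotimes\rchains(E)$ of \cref{s:connected} exhibits the stable diagonal it restricts to. The main obstacle is the sign bookkeeping in the middle step: arranging the three sources of signs --- \cref{prop:suspensionconnected}, the suspension reshuffle in $\chains(E)^{\ot r}$, and the normalisation $\epsilon$ --- so that they cancel for every $m$, together with the routine but unavoidable care in making the filtered-colimit and cofinality statements precise for the functor $\rWd(r)\hotimes-$.
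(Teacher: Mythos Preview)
Your proposal is correct and follows exactly the approach the paper has in mind: the corollary is stated in the paper without proof, as an immediate consequence of \cref{prop:suspensionconnected}, and your argument spells out precisely that deduction --- build the colimit diagram for source and target, rescale each $\mu_m$ by the stated sign and power of $\tilde{r}!$, and invoke the suspension identity to check compatibility of successive stages. Your identification of the residual sign discrepancy (your $\epsilon(m+1,n+1)-\epsilon(m,n)\equiv q+\binom{r}{2}n$ versus the $q+\binom{r}{2}(n+1)$ from the proposition) as being absorbed by the Koszul sign in the identification $(\sus{-m}\chains(E_m))^{\ot r}\cong\sus{-rm}\chains(E_m)^{\ot r}$ is the right diagnosis; the paper does not carry this out either.
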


\begin{corollary}
 On the stable chains of a $\susp{}$-spectrum $\{E_m\}_{m \geq 0}$ we can define the following connected diagonal $\mu_{\mathrm{sp}}$ (which restricts to a stable diagonal)
 \[
 \mu_{\mathrm{sp}}(e_{-q}^\dd \hotimes \tau) = (-1)^{mq+\binom{r}{2}\left(m(m+n-1) + \binom{m+1}{2}\right)}\frac{1}{(\tilde{r}!)^{m}}\rho^{-m} \mu(e_{-q}^\dd \hotimes \tau)
 \]
 where $\tau\in \chains(E_m)$.
\end{corollary}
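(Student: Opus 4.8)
The plan is to exhibit $\mu_{\mathrm{sp}}$ as a filtered colimit of rescaled copies of the connected diagonal $\mu$ of Theorem~\ref{thm2:mainthm} on the semi-simplicial pointed sets $E_m$, the rescalings being dictated by Proposition~\ref{prop:suspensionconnected} so that they are compatible with the transition maps of the colimit. First I would record the colimit presentations involved. Since $\cadenas(\susp{}X)\cong\susp{}\cadenas(X)$ and $\chains=\sus{}\cadenas$, the structure maps $\sigma_m\colon\susp{}E_m\to E_{m+1}$ induce chain maps on normalized chains, and $\chains(E)=\colim_m\susp{-m}\chains(E_m)$ along transition maps built from $\chains(\sigma_m)$ and the canonical identification $\susp{-m}\cong\susp{-m-1}\susp{}$. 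Because the reindexing $\ucadenas\mapsto\rucadenas$, tensor products, and the construction $\rWd(r)\hotimes(-)$ all commute with filtered colimits, the same holds for $\rchains(E)=\colim_m\susp{-rm}\rchains(E_m)$, for $\chains(E)^{\ot r}=\colim_m\susp{-rm}\chains(E_m)^{\ot r}$ (via the canonical isomorphisms $(\susp{-m}C)^{\ot r}\cong\susp{-rm}(C^{\ot r})$), and for $\rWd(r)\hotimes\rchains(E)=\colim_m\bigl(\rWd(r)\hotimes\susp{-rm}\rchains(E_m)\bigr)$. Consequently, a $\Cyc_r$-equivariant chain map $\mu_{\mathrm{sp}}\colon\rWd(r)\hotimes\rchains(E)\to\chains(E)^{\ot r}$ is the same datum as a family of $\Cyc_r$-equivariant chain maps $\mu_{\mathrm{sp}}^{(m)}\colon\rWd(r)\hotimes\susp{-rm}\rchains(E_m)\to\susp{-rm}\chains(E_m)^{\ot r}$ compatible with the transition maps.

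I would then take $\mu_{\mathrm{sp}}^{(m)}(e_{-q}^\dd\hotimes\vec\tau)=(-1)^{\varepsilon(m,n,q)}(\tilde r!)^{-m}\rho^{-m}\,\mu(e_{-q}^\dd\hotimes\vec\tau)$, transported through the identifications above, where $\varepsilon(m,n,q)=mq+\binom r2\!\bigl(m(m+n-1)+\binom{m+1}{2}\bigr)$, $\tau$ has degree $n$, and $\tilde r!$ is a unit in $R$ as the formula presupposes; each $\mu_{\mathrm{sp}}^{(m)}$ is then a $\Cyc_r$-equivariant chain map since $\mu$ is and $\rho^{-m}$ is a $\Cyc_r$-equivariant chain automorphism of $C^{\ot r}$, and for $m=0$ it is $\mu$ itself. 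To verify compatibility with the transition maps I would use naturality of $\mu$ under $\sigma_m$ (Theorem~\ref{thm2:mainthm}) to rewrite $\mu$ at level $m+1$, on classes pulled back from $E_m$, through $\chains(\sigma_m)^{\ot r}$, and then invoke the second identity of Proposition~\ref{prop:suspensionconnected}, namely $\mu(e_{-q}^\dd\ot\susp{}\tau)=(-1)^{q+\binom r2(n+1)}\tilde r!\,\susp{\ot r}\rho\,\mu(e_{-q}^\dd\ot\tau)$, to re-express the result in terms of $\mu$ at level $m$. The $\tilde r!$-powers and the $\rho$-powers then telescope automatically, and the required compatibility reduces to the single sign identity
\[
(-1)^{\varepsilon(m+1,n+1,q)}=(-1)^{\varepsilon(m,n,q)}\,(-1)^{q+\binom r2(n+1)}\,\zeta_m,
\]
where $\zeta_m$ is the Koszul sign produced by commuting an $r$-fold degree-one suspension past the tensor factors and past the cyclic permutation, combined with the sign relating $(\susp{-m}C)^{\ot r}$ and $\susp{-rm}(C^{\ot r})$. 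This I would check by induction on $m$ from the base case $m=0$, evaluating $\zeta_m$ with the help of the identity $\binom r2(n+1)=\sum_{s}s(n+1)$ recalled just before Proposition~\ref{prop:suspensionconnected}; one finds $\varepsilon(m+1,n+1,q)-\varepsilon(m,n,q)\equiv q+\binom r2 n\pmod 2$, so that $\varepsilon$ is precisely the telescoped sum of the per-level exponents $q+\binom r2(n+k+1)$ adjusted by the (even) discrepancies coming from the $\zeta_k$.

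With the $\mu_{\mathrm{sp}}^{(m)}$ in hand, the colimit over $m$ produces the asserted $\Cyc_r$-equivariant chain map $\mu_{\mathrm{sp}}\colon\rWd(r)\hotimes\rchains(E)\to\chains(E)^{\ot r}$, that is, a connected $r$-diagonal on $\chains(E)$ whose value on a class represented by $\tau\in\chains(E_m)$ is the displayed formula; precomposing with the canonical chain map $\varphi\colon\Wst(r)\ot\chains(E)\to\rWd(r)\hotimes\rchains(E)$ of Section~\ref{s:connected} exhibits the stable diagonal it restricts to. I expect the only genuinely laborious step to be the sign identity above: disentangling the Koszul signs coming from the left and right suspensions, from iterated suspensions of tensor powers, and from the $\rho$-shuffles, and matching their accumulation against the closed form $mq+\binom r2\bigl(m(m+n-1)+\binom{m+1}{2}\bigr)$. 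Everything else is the formal manipulation of filtered colimits, naturality of $\mu$, and a single invocation of Proposition~\ref{prop:suspensionconnected}.
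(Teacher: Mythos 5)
Your overall strategy — present $\mu_{\mathrm{sp}}$ as a compatible family of rescalings of $\mu$ on each level $m$, define the rescalings by $(-1)^{\varepsilon(m,n,q)}(\tilde r!)^{-m}\rho^{-m}\mu$, and verify compatibility along the colimit by a single application of the second identity in Proposition~\ref{prop:suspensionconnected} plus naturality of $\mu$ — is exactly the implicit argument the paper has in mind (the paper proves the two suspension lemmas and Proposition~\ref{prop:suspensionconnected} and leaves the two corollaries as immediate telescopings). Your computation $\varepsilon(m+1,n+1,q)-\varepsilon(m,n,q)\equiv q+\binom r2 n \pmod 2$ is correct.

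However, there is a concrete issue at the sign-matching step that you gloss over. The per-level exponent supplied by Proposition~\ref{prop:suspensionconnected} is $q+\binom r2(n+1)$, while your telescoping produces $q+\binom r2 n$; the discrepancy is $\binom r2$, which is \emph{odd} whenever $r\equiv 3\pmod 4$ (e.g.\ $r=3$, where $\binom 32=3$). You attribute this residual to "the (even) discrepancies coming from the $\zeta_k$" — but that phrasing contradicts what you need: $\zeta_m$ must contribute $(-1)^{\binom r2}$, not $+1$. So either your description of $\zeta_m$ is wrong, or the missing $\binom r2$ really must come from the Koszul sign relating $(\susp{-m}C)^{\otimes r}$ to $\susp{-rm}(C^{\otimes r})$ and from the $(-1)^q$ carried by $\id\otimes\susp{r}$ — and you have not actually computed that sign. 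The whole corollary is a sign statement, so "I expect this is laborious but works" is not a proof; you would need to carry out the Koszul/\,$\rho$/\,suspension bookkeeping explicitly (at least once, checking $m=0\to 1$ with the transition maps spelled out) and show the residual $(-1)^{\binom r2}$ genuinely appears to cancel the discrepancy. As written, the argument has a gap precisely where the corollary has content.
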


\subsection{A stable $r$-cyclic diagonal for right spectra} In the next proposition we observe that the homomorphism $W_*(r) \to \cE(r)$ constructed in \cite{medina2021may_st} is compatible, up to a sign to be determined, with the right suspension, and therefore gives a nice map
\[
 W_*(r)_{st} \lra \cE_{st}
\]
that yields a stable diagonal on the chain complex of a $\sus{}$-spectrum.

\begin{proposition}\label{prop:suspensionunstable}
 The effect of left and right suspensions on the $r$-cyclic unstable diagonals of \cite{medina2021may_st} is the following:
 \begin{align*}
 \mu(e_q \ot \susp{} \tau) &= (-1)^{q+\binom{r}{2}n}(\tilde{r}-1)!\susp{\ot r}\sum_{i=1}^{\tilde{r}} \rho^{2i}\mu(e_{q-r+1} \ot \tau)
 \\
 \mu(e_q \ot \sus{}\tau) &= (-1)^{q+\binom{r}{2}n}\tilde{r}!\sus{\ot r}\mu(e_{q-r+1} \ot \tau)
 \end{align*}
\end{proposition}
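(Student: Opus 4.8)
The plan is to reduce the statement to a compatibility property of the explicit $\Cyc_r$-equivariant chain map $\psi\colon W_*(r)\to\cE(r)$ into the Barratt--Eccles operad through which the unstable diagonal $\mu$ of \cite{medina2021may_st} is defined, and then to chase Koszul signs exactly as in the two lemmas preceding \cref{prop:suspensionconnected}. Recall that $\mu$ factors as $W_*(r)\otimes\cadenas(X)\xra{\psi\otimes\id}\cE(r)\otimes\cadenas(X)\to\cadenas(X)^{\otimes r}$, the last arrow being the Barratt--Eccles action (table reduction into the surjection operad followed by the interval-cut formula on simplices); thus $\mu(e_q\otimes\tau)$ is a signed sum of tensors of faces of $\tau$. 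Since $\mu$ is natural in simplicial (pointed) sets, it is enough to verify both identities for $X=\Delta^n$ with $\tau=[0,1,\dots,n]$ the top simplex, so that $\sus{}\tau$ and $\susp{}\tau$ are the evident cones on $\tau$.

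For the right suspension I would use that, under $\cadenas(\sus{}X)\cong\sus{}\cadenas(X)$, every face of $\sus{}\tau$ that omits the cone vertex $n{+}1$ is the basepoint, hence zero in $\cadenas(\sus{}X)$. Therefore a summand $\tau^{(1)}\otimes\cdots\otimes\tau^{(r)}$ of $\mu(e_q\otimes\sus{}\tau)$ survives precisely when each factor has $n{+}1$ as its last vertex; on the operadic side this selects exactly the sequences of permutations $(\sigma_0,\dots,\sigma_q)$ occurring in $\psi(e_q)$ for which $(\sigma_0(0),\dots,\sigma_{r-1}(0))$ is a bijection onto $\{0,\dots,r-1\}$ --- the terms \emph{not} annihilated by the operadic suspension $\theta_{r-1}$ of \cite{berger2004combinatorial}. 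The heart of the argument is then the identity $\theta_{r-1}(\psi(e_q))=\tilde{r}!\,\psi(e_{q-r+1})$, up to the relevant suspension shift, which I would read off the closed formula for $\psi$ in \cite{medina2021may_st}: deleting the distinguished length-$r$ block of a surviving summand of $\psi(e_q)$ lands it in $\psi(e_{q-r+1})$, and the fibre of this deletion over a fixed summand has exactly $\tilde{r}!$ elements --- this is where the factor $\tilde{r}!$ is born. Transporting the identity through the interval-cut action and $\cadenas(\sus{}X)^{\otimes r}\cong\sus{\otimes r}\cadenas(X)^{\otimes r}$ gives the second formula up to sign.

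For the left suspension the same analysis applies with $d_0(\susp{}\sigma)=*$: the cone vertex is $0$ and must be the first vertex of every tensor factor, so the distinguished length-$r$ block sits at the opposite end of the sequence from the right-suspension case. Deleting it then differs by a cyclic rotation of the $r$ tensor factors; on $\cadenas(\susp{}X)^{\otimes r}\cong\susp{}(\cdots)$ the Koszul signs of the degree-one left-suspension operators, together with $\Cyc_r$-equivariance of $\mu$, force only the even powers $\rho^{2i}$ to contribute, indexed by the $\tilde{r}$ admissible positions $i=1,\dots,\tilde{r}$, each occurring with residual multiplicity $(\tilde{r}-1)!=\tilde{r}!/\tilde{r}$, consistently with the right-hand case. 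The global sign $(-1)^{q+\binom{r}{2}n}$ would be assembled from the Koszul sign $\binom{r}{2}n$ of $\sus{\otimes r}$ resp.\ $\susp{\otimes r}$ moving $r$ shift operators past $r$ tensor factors of total degree $\equiv n$ (the computation already done for \cref{prop:suspensionconnected}), the parity $q$ coming from the interplay of the reindexing $e_q\mapsto e_{q-r+1}$ with the alternating $N,T$ differential of $W_*(r)$, and the reordering sign of $\rho$, absorbed in the left-suspension case.

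The main obstacle will be precisely this sign-and-multiplicity bookkeeping: pinning down from the conventions of \cite{medina2021may_st} which end of the sequence the distinguished block occupies in each case, establishing the fibre count $\tilde{r}!$ (and its $(\tilde{r}-1)!$ refinement over the rotation classes) directly from the combinatorial recipe for $\psi$, and confirming that the accumulated Koszul and reordering signs collapse to exactly $(-1)^{q+\binom{r}{2}n}$ rather than to something off by a sign or by a power of $\rho$. Everything else is a transcription of structure already in place: the factorization of $\mu$ through $\cE(r)$, the operadic suspension $\theta_{r-1}$, and the identifications $\cadenas(\sus{}X)\cong\sus{}\cadenas(X)$ and $\cadenas(\susp{}X)\cong\susp{}\cadenas(X)$.
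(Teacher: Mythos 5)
Your high-level approach matches the paper's exactly: factor $\mu$ through the explicit map $\psi\colon W_*(r)\to\cE(r)$ of \cite{medina2021may_st}, use the operadic suspension of \cite{berger2004combinatorial} to detect which summands of $\psi(e_q)$ survive on a suspended simplex, and count. The paper's own proof is the same sketch, so there is no genuine difference of route.

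There is, however, a substantive error in the way you pin down \emph{which end} of the Barratt--Eccles element governs which suspension, and it propagates to the identity you call the heart of the argument. The cone vertex of the right suspension $\sus{}\tau$ is $n+1$, the \emph{last} vertex; under the interval-cut formula the last vertex sits in the final intervals of the cut, so requiring every tensor factor to contain $n+1$ forces the \emph{last} $r$ letters of the surjection (equivalently the last $r$ permutations of the Barratt--Eccles element) to be distinct, not the first $r$. Dually, the cone vertex $0$ of $\susp{}\tau$ governs the \emph{first} $r$ letters. You have these reversed, which also reverses the roles of $\theta_{r-1}$ in the two cases. In particular the claimed identity $\theta_{r-1}(\psi(e_q))=\tilde{r}!\,\psi(e_{q-r+1})$ cannot hold as written: since every summand of $\psi(e_q)$ is normalized to begin with $\rho^0$, truncating the \emph{first} $r-1$ permutations (which is what $\theta_{r-1}$ does) destroys that normalization and forces a cyclic rotation to restore it — precisely the mechanism that produces $\sum_{i=1}^{\tilde r}\rho^{2i}$ with the smaller factor $(\tilde r-1)!$ in the $\susp{}$ formula. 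Deleting the distinguished block from the \emph{rear} leaves the leading $\rho^0$ intact, which is why the $\sus{}$ formula is the clean one with $\tilde r!$ and no rotation. Swap the two cases and the rest of your bookkeeping is consistent with the paper.
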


\begin{proof}
 The effect of suspension on the Barrat-Eccles operad and the surjection operad is described in the appendix of \cite{berger2004combinatorial}. From that description one deduces that the summands on the left corresponding to an element $(\rho^0,\rho^{i_1},\rho^{i_1+1},\dots,\rho^{i_k},\rho^{i_k+1})$ will vanish unless the first $r$ elements are all distinct. A count yields the result.

 For the second statement, we follow the same argument, except that the summands that do not vanish are those whose last $r$ elements are all distinct.
\end{proof}

 If $\mu$ is the unstable diagonal of \cite{medina2021may_st}, there is a stable diagonal $\mu_{st}$ on the stable chains of a $\sus{}$-spectrum with coefficients in $\bZ[\frac{1}{\tilde{r}!}]$ defined as
 \[
 \mu_{st}(e_q \ot \tau) = (-1)^{mq+\binom{r}{2}\left(m(m+n-1) + \binom{m+1}{2}\right)}\frac{1}{(\tilde{r}!)^{m}}\mu(e_{q+m(r-1)}) \ot \tau)
 \]
 where $\tau\in \chains[n](E_m)$. Since $\sum_{i=1}^{\tilde{r}} \rho^{2i}$ is invertible, we have can derive a formula for the left suspension too.

 The disparity in the behaviour with respect to left suspension of the operations in this work and the ones in \cite{medina2021may_st} show that

 \begin{corollary}
	The chain level operations of \cite{medina2021may_st} are not the same as the ones of this work if $r\geq 5$.
\end{corollary}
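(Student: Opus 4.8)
The plan is to argue by contradiction, comparing the two operations through their behaviour under the left suspension functor $\susp{}$ — which is precisely what \cref{prop:suspensionconnected} and \cref{prop:suspensionunstable} record.

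The first step is bookkeeping: put the two constructions in a common framework where the statement ``they are the same'' is meaningful. Both the connected $r$-diagonal $\mu$ of \cref{thm2:mainthm} and the unstable $r$-diagonal of \cite{medina2021may_st} yield $r$-cyclic diagonals on the chains of $\sus{}$-spectra (the corollaries above, respectively the displayed $\mu_{st}$); equivalently, using the epimorphism $\varphi\colon\Wst(r)\ot\ucadenas\to\rWd(r)\hotimes\rucadenas$ of \cref{s:connected} (so that $e_q\ot x\mapsto\pm e^\vee_{-n(r-1)-q}\ot\vec x$ when $\deg x=n$), the connected diagonal induces an unstable $r$-diagonal $\mu_1$ on the normalized chains of semi-simplicial pointed sets, to be compared with the unstable diagonal $\mu_2$ of \cite{medina2021may_st}. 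Translating \cref{prop:suspensionconnected} through $\varphi$ and recalling \cref{prop:suspensionunstable}, the two operations obey left-suspension laws of the form
\begin{gather*}
\mu_1(e_q\ot\susp{}\tau)=\varepsilon_1\,\tilde r!\;\susp{\ot r}\,\rho^{\varepsilon}\bigl(\mu_1(e_{q-r+1}\ot\tau)\bigr),
\\
\mu_2(e_q\ot\susp{}\tau)=\varepsilon_2\,(\tilde r-1)!\;\susp{\ot r}\,\Bigl(\textstyle\sum_{i=1}^{\tilde r}\rho^{2i}\Bigr)\bigl(\mu_2(e_{q-r+1}\ot\tau)\bigr),
\end{gather*}
where the common index shift $r-1$ is forced by degree reasons, the signs $\varepsilon_1,\varepsilon_2$ are reconciled via $\binom r2\equiv\tilde r\pmod 2$, and $\varepsilon\in\{+1,-1\}$ is determined by how $\rho$ acts on the dual resolution $\rWd(r)$.

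Now suppose $\mu_1=\mu_2=:\mu$. Substituting into the two laws and cancelling the (injective) operator $\susp{\ot r}$, we obtain that for every semi-simplicial pointed set $X$, every chain $\tau$ and every admissible index, the element $\xi:=\mu(e_{q-r+1}\ot\tau)$ is killed by
\[
\delta_r:=\tilde r!\,\rho^{\varepsilon}\mp(\tilde r-1)!\sum_{i=1}^{\tilde r}\rho^{2i}\ \in\ R[\Cyc_r].
\]
For $r\ge 5$ one has $\tilde r\ge 2$; since $r$ is odd, $\{2i\bmod r:1\le i\le\tilde r\}=\{2,4,\dots,r-1\}$ omits $1$, so $\rho^{\varepsilon}\ne\rho^{2}$ and the coefficient of $\rho^{2}$ in $\delta_r$ equals $\mp(\tilde r-1)!\ne0$; hence $\delta_r\ne0$. (For $r=3$, by contrast, $\delta_3$ degenerates to a single binomial which the reindexing forces to vanish, consistent with the empirical coincidence for $r=3$ noted in the introduction and explaining the hypothesis $r\ge5$.) It remains to produce a value of $\mu$ on which $\delta_r$ acts nontrivially; this is where the explicit formulas of \cref{s:formulas} enter. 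For $r=5$ one may start from \cref{ex:omegarnfinal}: a value $\mu(e_\bullet\ot[0,1,2])$ contains the basis tensor $[0,2]\ot[0]\ot[0,1,2]\ot\emptyset\ot[0,1]$ with coefficient $2^2$, its five tensor factors are pairwise distinct, so its $\Cyc_5$-orbit is free, and $\rho^{\varepsilon}$ sends it to a cyclic shift lying outside the even shifts produced by $\sum_{i=1}^{2}\rho^{2i}$. Comparing coefficients along a suitable rotate of this tensor, one side carries $\pm\tilde r!\cdot2^2$ and the other $\pm(\tilde r-1)!\cdot2^2$ (or $0$), so after checking — again via \cref{s:formulas} — that no other summand of that value of $\mu$ lies in the same orbit, one gets $\delta_5\cdot\xi\ne0$, the required contradiction. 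For general odd $r\ge5$ the argument is the same once one names, via \cref{s:formulas}, one summand of one value of $\mu$ whose $r$ tensor factors are not all equal.

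The conceptual content is immediate — a single cyclic shift and a sum of $\tilde r\ge2$ distinct even cyclic shifts are different endomorphisms of a free $\Cyc_r$-orbit, so the two suspension laws cannot both hold for one and the same operation. The only real labour, which I expect to be the main obstacle, is the two bookkeeping tasks: carrying out the translation of \cref{prop:suspensionconnected} through $\varphi$ precisely enough to fix $\varepsilon$ and the overall signs, and verifying with the formulas of \cref{s:formulas} that the chosen value of $\mu$ genuinely meets the relevant free orbit in a single term, so that no cancellation spoils the coefficient comparison.
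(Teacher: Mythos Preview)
Your approach is the same as the paper's: the corollary is deduced from the disparity between the left-suspension formulae in \cref{prop:suspensionconnected} and \cref{prop:suspensionunstable}. The paper states this in one sentence (``The disparity in the behaviour with respect to left suspension\dots'') and gives no further argument, whereas you spell out the contradiction argument and identify the group-ring element $\delta_r$ explicitly. Your elaboration is sound --- in particular your remark that the $\rho^{\varepsilon}$ in the translated connected formula is $\rho^{-1}$, which for $r=3$ equals $\rho^2=\sum_{i=1}^{1}\rho^{2i}$ with matching scalar $\tilde r!=(\tilde r-1)!=1$, neatly explains why the argument only bites for $r\geq 5$. The residual bookkeeping you flag (exhibiting a value of $\mu$ meeting a free $\Cyc_r$-orbit in a single summand) is not carried out in the paper either; the example you cite from \cref{ex:omegarnfinal} is adequate, and one could shortcut this by noting that for a generic simplex some value of $\mu$ has pairwise distinct tensor factors, so the $\Cyc_r$-action is free there and $\delta_r\neq 0$ cannot annihilate it.
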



\section{Kan spectra}\label{s:9Kanspectra}

The category of Kan spectra is one of the many categories that model spectra. Introduced by Kan in \cite{Kan1963} and later developed by several articles \cite{burghelea_kanspectraI1967, burghelea_kanspectraII_1968, burghelea_kanspectraIII1969,Brown1973}. It has recently received some attention \cite{Stephan2015, CKP2023}.

\begin{definition}
	The \emph{stable simplex category} $\kansimplex$ is the colimit of the functors
	\[
	\simplex \lra \simplex \lra \simplex \lra \dots
	\]
	that send an ordinal $[n]$ to the ordinal $[n+1]$ and extend a map $f \colon [n] \to [m]$ to a map $f' \colon [n+1] \to [m+1]$ by setting $f'(n+1) = m+1$. It has one object $[n]$ for each integer $n\in \bZ$, and there are morphisms $\d^i \colon [n-1] \to [n]$ and $\s^i \colon [n] \to [n-1]$, for all $i \geq 0$, satisfying the simplicial identities.
\end{definition}

The morphisms from $[n]$ to $[m]$ are in bijection with the endomorphisms $f$ of the first infinite ordinal $\omega$ such that
\begin{itemize}
	\item $|f^{-1}(k)|<\infty$ for all $k\in \omega$,
	\item $|f^{-1}(k)|\neq 1$ for finitely many $k\in \omega$,
	\item $\sum_k (1-f^{-1}(k)) = m-n$.
\end{itemize}

\begin{definition}
	A \emph{Kan spectrum} is a presheaf $X \colon \kansimplex^{\op} \to \Setp$ on pointed sets such that for each $x\in X_n$ there is an $m$ such that $d_i(x) = *$ for all $i>m$. A map of Kan spectra is a natural transformation.
\end{definition}

This condition allows to define chain complexes $\ucadenas(X)$ and $\cadenas(X)$ as for simplicial sets: In degree $n$ they are generated by the pointed set $X_n$ modulo the base point, and the differential is the alternate sum of the face maps.

The standard augmented infinite simplex $\asimplex$ is the Kan spectrum with one $n$-simplex for each order-preserving map $[n] \to [\omega]$.

Write as before $\uchains(X)$ and $\chains(X)$ for their shifted versions. There is a covariant functor $\kansimplex \to \Ch{R}$ that sends any object $[n]$ to the chain complex of the augmented infinite simplex $\chains(\asimplex^\infty)$, and the action of faces and degeneracies are the stabilization of the faces and degeneracies of the finite simplices. There is another covariant functor $\kansimplex \to \Ch{R}$ that sends any object $[n]$ to the cochains $\cochains(\asimplex^\infty)$, and the action of the faces and degeneracies is the limit of their finite counterparts (note that this second complex is not free). Then, we have, as in Lemma \ref{lemma:2}:

\begin{lemma}
	There is a natural isomorphism
	\[
	\chains(\asimplex^{\infty}) \ot_{\kansimplex} \rA(X)_\bullet \cong \cadenas(X)
	\]
\end{lemma}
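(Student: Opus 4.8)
The plan is to mimic the proof of Lemma~\ref{lemma:2} and its companion, the isomorphism \eqref{eq:1}, in the stable setting, using that everything involved is defined as a colimit (respectively limit) over the stabilization maps $\simplex\to\simplex\to\cdots$. First I would recall that, by definition, $\kansimplex$ is the colimit of this tower, so that a functor out of $\kansimplex$ (such as $[n]\mapsto\chains(\asimplex^\infty)$ or $[n]\mapsto\cochains(\asimplex^\infty)$) is determined by a compatible system of functors out of the finite $\simplex$'s, and a coend over $\kansimplex$ can be computed as a suitable (co)limit of the coends over the finite simplex categories. Concretely, the Kan-spectrum condition — for each simplex $x$ the faces $d_i(x)$ vanish for $i$ large — ensures that $\cadenas(X)$, as well as $\chains(\asimplex^\infty)\ot_{\kansimplex}\rA(X)_\bullet$, is built out of the finite stages: restricting $X$ along $\simplex\hookrightarrow\kansimplex$ (the $k$-th copy) gives an augmented semi-simplicial object $X^{(k)}$, and one has $\cadenas(X)\cong \colim_k \susp{-k}\cadenas(X^{(k)})$, compatibly with the analogous presentation of the functor tensor product.

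The key steps, in order, are: (1) set up the presentation of both sides of the claimed isomorphism as colimits over $k$ of their finite-stage analogues, carefully tracking the left-suspension shifts $\susp{-k}$ coming from the definition of $\kansimplex$ (the stabilization sends $[n]$ to $[n+1]$, which at the level of chains introduces a degree shift, matching the convention that faces/degeneracies of $\rA(X)$ have degree $\mp1$); (2) apply Lemma~\ref{lemma:2} (together with \eqref{eq:1}) at each finite stage $k$ to get natural isomorphisms $\chains(\asimplex^{n})\ot_{\simplex}\rA(X^{(k)})_\bullet\cong\cadenas(X^{(k)})$, suitably desuspended; (3) check that these isomorphisms are compatible with the structure maps of the two towers, i.e. that the square relating stage $k$ to stage $k+1$ commutes (this is where the explicit sign $(-1)^{\lambda(U^c,U)+(n+1)m}$ in the refined form of Lemma~\ref{lemma:2} must be shown to be stable under the stabilization map, exactly as in the suspension computations of Section~\ref{s:suspension}); (4) pass to the colimit, using that colimits of chain complexes are exact, to obtain the desired natural isomorphism $\chains(\asimplex^\infty)\ot_{\kansimplex}\rA(X)_\bullet\cong\cadenas(X)$; (5) verify naturality in $X$, which is immediate from naturality at each finite stage.

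The main obstacle I expect is step (3): matching the signs across the stabilization. The functor tensor product over $\kansimplex$ has extra relations coming from the new morphisms $\d^i$, $\s^i$ with large $i$, and one must be sure that the isomorphisms of Lemma~\ref{lemma:2} — which were defined using the Alexander duality $\Lambda$ on the \emph{finite} simplex $\asimplex^n$ — assemble into a well-defined map on the limit/colimit, i.e. that $\Lambda$ on $\asimplex^{n+1}$ restricts correctly to $\Lambda$ on $\asimplex^n$ up to the shift $\susp{}$, with the signs in Remark~\ref{remark:alex} behaving additively. This is precisely the kind of bookkeeping carried out in the lemmas preceding Proposition~\ref{prop:suspensionconnected}, so I would reuse that computation: the relevant sign difference, as there, reduces to comparing $\lambda(U^c,U)$ computed in $\{0,\dots,n\}$ versus $\{0,\dots,n+1\}$, and the Kan-spectrum finiteness condition guarantees that for any fixed simplex only finitely many stages are relevant, so no convergence issue arises. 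Once the compatibility of signs is established, passing to the colimit is formal.
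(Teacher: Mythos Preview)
The paper does not actually prove this lemma: it states it with the remark ``as in Lemma~\ref{lemma:2}'' and moves on. So there is no detailed proof in the paper to compare against; your colimit-of-finite-stages argument is a reasonable way to fill in the details and is in the spirit of what the paper intends.

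That said, you are doing more work than needed, because you have conflated the stated lemma with Lemma~\ref{lemma:2}. The isomorphism claimed here involves $\chains(\asimplex^{\infty})$, not $\cochains(\asimplex^{\infty})$, so it is the stable analogue of the density formula~\eqref{eq:1}, not of Lemma~\ref{lemma:2}. In the finite case the map~\eqref{eq:1} simply sends a pair consisting of a simplex $[k]\to[n]$ and an element $\tau\in L(X)_n$ to the pullback $\tau$ restricted along $[k]\to[n]$; Alexander duality $\Lambda$ plays no role. Consequently your step~(3), where you anticipate the main obstacle to be matching the Alexander-duality signs $(-1)^{\lambda(U^c,U)+(n+1)m}$ across the stabilization, is addressing a difficulty that does not arise for this statement. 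Those sign issues are genuine, but they belong to the stable analogue of Lemma~\ref{lemma:2} (the passage from $\chains$ to $\cochains$ via $\Lambda$), which the paper treats in the paragraph \emph{after} the lemma. For the lemma as stated, the compatibility check in step~(3) reduces to the elementary fact that pulling back along a face map commutes with the stabilization $[n]\mapsto[n+1]$, with the Kan-spectrum finiteness condition guaranteeing that each simplex lives at a finite stage. So your outline is correct, just prune the Alexander-duality bookkeeping from it.
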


Then, the rest of \cref{s:3complexes} remains true: One obtains a stable cosimplicial cochain complex $\chains(\asimplex^\infty)$ and a stable cosimplicial cochain complexes $\Omd(r,\infty)$ and $\Omhatd(r,\infty)$, and the former is the dual to a chain complex $\Om(r,\infty)$ that admits maps $\eta^{n}$ to $\Om(r)$. With these changes, Proposition \ref{prop:omegarm} yields a connected diagonal on the chain complex of a Kan spectrum.

Note that the previous lemma does not make sense without dualizing: the top simplex in $N_*(\asimplex^\infty)$ is far away, at infinite dimension.

\section{Explicit formulae}\label{s:formulas}

Let $r$ be an odd prime and let $\tau\mapsto g(\tau)$ be a $r$-cyclic straightening with duality. Let $\chains(X)$ denote the right suspension of the normalized chain complex $\cadenas(X)$ of $X$. If $x$ is a cochain of dimension $n$ in the cochain complex of an augmented simplicial set $X$, then $\power^i([x]) = [y]$ is a cochain of dimension $n = m+i$ with
\[
	y(\tau) = \frac{1}{(\tilde{r}!)^{m}}x^{\ot r}(\mu(e_{ri} \hotimes \tau)).
\]
The element $\mu(e_{q} \hotimes \sigma)\in \chains(X)^{\ot r}$ is computed as follows: Let $\cP(r,q)$ be the collection of pairs $(U,A)$ such that
\renewcommand{\theenumi}{\roman{enumi}}%
\begin{enumerate}
	\item $U = (u_0,\dots,u_{q-1})$ is a non-decreasing sequence with $u_j\in \{0,1,\dots,n\}$,
    \item $A = (a_0,\dots,a_{q-1})$ is a sequence with $a_i\in \{0,1,\dots,r-1\}$, and
	\item\label{condfor:3} $a_i<a_j$ if $u_i=u_j$.
\end{enumerate}
Each pair $(U,A)\in \cP(r,q)$ and each chain $\tau\in \chains(X)$ of dimension $n$ (and degree $n+1$) defines the following generator of $\chains(X)^{\ot r}$:
\[
	d_{U_A^{r-1}}(\tau) \ot d_{U_A^{r-2}}(\tau) \ot \dots \ot d_{U_A^{0}}(\tau)\in \chains(X)^{\ot r}.
\]
where $U_A^k = \{u_i\mid u_i + a_i \equiv k\mod r\}$ for each $j\in 0,1,\dots,r-1$. We will now define recursively the coefficient $\nu_{U,A}$ of this generator. Let $A'$ be the sequence $\{a_i+u_i\mod r\mid i=0,\dots,q-1\}$ and let $q_k$ be the amount of elements of $A'$ that are equal to $k$, so $q=\sum q_k$, and define
\renewcommand{\theenumi}{\arabic{enumi}}
\begin{enumerate}
 \item $\signo_0 = nq$,
	\item $ \signo_1 = \sum_k\binom{q_k}{2}$,
	\item $ \signo_2$ is the sum of all entries of $U$
	\item $ \signo_3$ is $(n+1)$ times the sum of all the entries of $A'$,
	\item $ \signo_4$ is the parity of the permutation that orders $A'$.
\end{enumerate}
Define $\signo =\signo_0+\signo_1+\signo_2+\signo_3+\signo_4$. If there is a subsequence $u_j = u_{j+1} = \dots = u_{j+r-1}$ of length at least $r$, then $\nu_{U,A}=0$, otherwise:
\begin{itemize}
	\item If $q<r$, let $A' = (a'_0,\dots,a'_{q-1})$ be the result of ordering $A$ with $\signot$ the parity of that permutation.
	\begin{itemize}
		\item[$\bullet$] If $a_{2j}'$ is even and different from $r-1$ and $a_{2j+1}'$ is odd, then define $\nu_{U,A} = (-1)^{\signot}\varphi(q)!(\varphi(0)!)^n$, where $\varphi(q) = \lfloor \frac{r-1-q}{2}\rfloor$.
		\item[$\bullet$] Otherwise, $\nu_{U,A}=0$.
	\end{itemize}
\item If $q \geq r$, let $\ell$ be the biggest number smaller than $r$ such that $u_{q-\ell-1}<u_{q-\ell}$. Let $k$ be such that $u_{q-k-1}<u_{q-k} = u_{q-\ell-1}$. Define
\begin{align*}
w &= (a_{q-\ell},a_{q-\ell+1},\dots,a_{q-1})
\\
z &= (a_{q-k},\dots,a_{q-\ell-1})
\end{align*}
\begin{itemize}
	\item[$\bullet$] If $\{0,1,\dots,r-1\}\not \subset w\cup z$, then $\nu_{U,A} = 0$.
	\item[$\bullet$] Otherwise, define
\begin{itemize}
	\item $x = (x_0,\dots,x_{r-\ell-1})$, the ordered complement of $w$ in the set $\{0, \dots ,r-1\}$,
	\item	$y = (y_0,\dots,y_{k-r-1})$, the ordered complement of $x$ in $z$,
	\item $\signot_0$ the parity of the permutation that orders $w\cup x$
	\item $\signot_1$ the parity of the permutation that reorders $x\cup y$ to $z$.
	\item $\signot_2=(|x|-1)|y|$.
\end{itemize}
and for each permutation $\pi$ of the entries of $y$, let $\signot_3$ be its parity and define the sequence
\[
\quad\quad \omega_\pi = (g(x),g(x\cup y_{\pi(0)}), g(x \cup y_{\pi(1)}\cup y_{\pi(1)}), \dots, g(x\cup y)).
\]
\begin{itemize}
	\item[$\bullet$] If $\omega_\pi$ has repetitions, then set $\nu_{U,A,\pi} = 0$.
	\item[$\bullet$] Otherwise, let $(\omega'_0,\dots,\omega'_{k-r})$ be the result of ordering the sequence and let $\signot_4$ be the parity of that reordering. Define a new pair $(U_{\pi},A_{\pi})$ as
	\begin{itemize}
		\item[] $\quad U_{\pi} = (u_0,\dots,u_{q-r})$
		\item[] $\quad A_{\pi} = (a_0,\dots,a_{q-k-1},\omega'_0,\dots,\omega'_{k-r})$.
	\end{itemize}
Define $\signot_\pi = \signot_0+\signot_1+\signot_2+\signot_3+\signot_4$.
\end{itemize}
Finally, define
\[
	\nu_{U,A} = \sum_{\pi}(-1)^{\signot_\pi}\nu_{U_\pi,A_\pi}.
\]
\end{itemize}
\end{itemize}
The coefficient of $(U,A)$ as $(-1)^{\signo}\nu_{U,A}$.

\begin{example}\label{example:formulas1_3}
	If $r=3$, $X= \asimplex^7$, $U = (0,0,1,3,4,6), A = (0,1,2,0,2,1)$ and $\tau = [0,1,2,3,4,5,6,7]$ , we have:
	\begin{align*}
		\signo_0 &\equiv 0
		&
		\signo_1 &\equiv 1
		&
		\signo_2 &\equiv 0
		&
		\signo_3 &\equiv 0
		&
		\signo_4 &\equiv 1
		&
		\signo &\equiv 0
	\end{align*}
	To compute $\nu_{U,A}$, in the first iteration we obtain that $x=(0)$, $y=\emptyset$, thus there is only one permutation $\pi$ to be considered, for which
	\begin{align*}
		\signot_0 &\equiv 1
		&
		\signot_1 &\equiv 0
		&
		\signot_2 &\equiv 0
		&
		\signot_3 &\equiv 0
		&
		\signot_4 &\equiv 0
		&
		\signot_{\pi} &\equiv 1
	\end{align*}
	The new pair is $U_{\pi} = (0,0,1,3), A_\pi = (0,1,2,0)$ and $\nu_{U,A} = -\nu_{U_\pi,A_\pi}$. Renaming $U'= U_\pi$ and $A'=A_\pi$, in the second iteration we obtain that $x=(1)$, $y=(0)$, thus there is only one permutation $\pi$ to be considered, for which
	\begin{align*}
		\signot_0 &\equiv 0
		&
		\signot_1 &\equiv 1
		&
		\signot_2 &\equiv 0
		&
		\signot_3 &\equiv 1
		&
		\signot_4 &\equiv 0
		&
		\signot_{\pi} &\equiv 0
	\end{align*}
	The new pair is $U'_{\pi} = (0,0), A'_\pi = (0,1)$, and $\nu_{U',A'} = \nu_{U'_\pi,A'_\pi}$. Renaming $U'' = U'_\pi$ and $A'' = A'_\pi$, we compute that $\nu_{U'',A''} = 1$. Therefore the coefficient of the summand given by $(U,A)$ is
	\[
		(-1)^{\signo} \nu_{U,A} = -\nu_{U',A'} = -\nu_{U'',A''} = -1
	\]
\end{example}

\begin{example}
	If $r=5$, $X= \asimplex^2$, $U = (0,1,1,1,2,2,2), A = (1,2,3,0,1,3,4)$ and $\tau = [0,1,2]$ , we have:
	\begin{align*}
 \signo_0 &\equiv 0
		&
		\signo_1 &\equiv 0
		&
		\signo_2 &\equiv 1
		&
		\signo_3 &\equiv 1
		&
		\signo_4 &\equiv 1
		&
		\signo &\equiv 1
	\end{align*}
	To compute $\nu_{U,A}$, in the first iteration we obtain that $x=(0,2)$, $y=(3)$, thus there is only one permutation $\pi$ to be considered, for which
	\begin{align*}
		\signot_0 &\equiv 1
		&
		\signot_1 &\equiv 0
		&
		\signot_2 &\equiv 1
		&
		\signot_3 &\equiv 0
		&
		\signot_4 &\equiv 0
		&
		\signot_{\pi} &\equiv 0
	\end{align*}
	The new pair is $U_{\pi} = (0,1,1), A_\pi = (1,0,2)$ and $\nu_{U,A} = \nu_{U_\pi,A_\pi}$. Renaming $U' = U_\pi$ and $A' = A_\pi$, we have $\nu_{U',A'} = -2^2$. Therefore the coefficient of the summand given by $(U,A)$ is
	\[
		(-1)^{\signo} \nu_{U,A} = -\nu_{U',A'} = 2^2.
	\]
\end{example}

\subsection{The case \texorpdfstring{$r=3$}{r = 3}} If $r=3$, there is a simpler description of the operations. First, we replace Condition \eqref{condfor:3} for the following:
\begin{itemize}
	\item[(iii')] $a_i\neq a_{i+1}$ for all $i=0,\dots,q-1$.
\end{itemize}
Define the \emph{regular blocks} of $A$ as the subsequences $(a_{q-2k-1},a_{q-2k},a_{q-2k+1})$ of length $3$. Define the \emph{exceptional block} of $A$ as $a_0$ if $q$ is odd and $(a_0,a_1)$ if $q$ is even. A block is \emph{ascending} (resp. descending) if it has no repeated entries and it is cyclically ordered (resp. not cyclically ordered). The coefficient $\nu_{U,A}$ is non-zero if and only if
\begin{enumerate}
	\item no block has repeated entries,
	\item if a block $(a_{q-2k-1},a_{q-2k},a_{q-2k+1})$ is descending, then $u_{q-2k-1}<a_{q-2k}<a_{q-2k+1}$,
	\item if a block $(a_{q-2k-1},a_{q-2k},a_{q-2k+1})$ is ascending, then $u_{q-2k-1}\neq a_{q-2k}$ or $a_{q-2k}\neq a_{q-2k+1}$,
	\item The exceptional block is either of the following: $(0),(0,1),(1,0)$.
\end{enumerate}
If that is the case, define $\signo$ as in the previous section and let $\signot$ be the number of descending blocks. Then the coefficient of $(U,A)$ is $\nu_{U,A} = (-1)^{\signo+\signot}$.

\begin{example}
	If $r=3$, $X= \asimplex^7$, $U = (0,0,1,3,4,6), A = (0,1,2,0,2,1)$ and $\tau = [0,1,2,3,4,5,6,7]$ as in Example \ref{example:formulas1_3}, we have the exceptional block $(0,1)$ and the regular blocks $(1,2,0)$ and $(0,2,1)$. Only the last one is descending, thus $\signot = 1$ and $\nu_{U,A} = (-1)^{\signo+\signot} = -1$.
\end{example}

\subsection{Explanation of the formulas for any prime number \texorpdfstring{$r$}{r}} Here we justify the formulas of this section using the main theorem of the paper. A complete read of the paper is necessary to follow this section. Let $\tau\in \chains(X)$ be a chain of dimension $n$ (hence degree $n+1$). Let $(U,A)$ be a generator of $\chains(\asimplex^n\times \EC_r)$, and let $(U',A') = \beta(U,A)$, let $q_k$ be the number of elements of $A'$ that equal $k$ and let $q = \sum_{k}q_k$ the degree of $(U,A)$.

The sign of the map $\alpha$ is the parity of
\[
	\sum_{k}\lambda((U_A^{k})^c,U_A^k) + (n+1)\sum_k q_k + (n+1)\sum_kk(r-1-q_k),
\]
where the first term is the sign of $\Lambda$, the second is the sign of the functor tensor product and the third is the sign of the reordering of the tensor factors. Notice that we have to reorder $(U^{r-1}_A \ot \dots U^0_A) \ot \tau \ot \dots \ot \tau$, hence the sign is $\sum_k(r-1-q_k)$ instead of $\sum_k q_k$. By Remark \ref{remark:alex}, we can write these signs as:
\begin{align}
\label{signo:lambda}	\lambda((U_A^k)^c,U^k_A) &\equiv \sum_i u_i + \sum_k(n+1-q_k)q_k + \binom{q_k}{2}
\\
\label{signo:prod} (n+1)q
\\
 \label{signo:reord} \sum_kk(r-1-q_k)&\equiv (n+1)\sum_{i} a'_i
\end{align}
The sign of the map $\beta$ is the sign that orders $A' = \{a_i+u_i\mid i=0,\dots,q-1\}$. The map $\gamma$ does not have sign and the map $\lambda$ has the sign $(n+1)q$. After the natural cancellations, we obtain $\signo$.

For the map $f$, we have first the sign of turning the right suspension into left suspension $(-1)^{(|w|+|z|)r}$, which can be written as $(-1)^{|y|+1}$. Then, the sign of $\twist$ is the parity of $(r-|x|)(|x|+|y|) \equiv \signot_2$, the sign of $\Lambda$ is $\signot_0$ and the sign of $s_*^{\Psd}$ is the parity $\signot_0$ of the permutation that arranges $x\cup y$ to $z$ plus $|y|+1$. Altogether yield $\signot_0+\signot_1+\signot_2$. For the map $\Phi$ we follow Remark \ref{remark:phidual}.
\subsection{Explanation of the formulas for \texorpdfstring{$r=3$}{r = 3}} A pieced word $A = (A_0\barra\dots\barra A_j)$ admits many underlying words $(a_0,\dots,a_{q-1})$, because the entries of each piece may be permuted. The formulas for $r=3$ follow from the following observation: each pieced word $A$ has a preferred underlying word $A = (a_0,\dots,a_{q-1})$ obtained by ordering each piece of size two so that the second element is the cyclic succesor of the first one. Moreover, with this choice we have that the preferred underlying word of $S(A_0\barra\dots\barra A_{j})$ is always the result of removing the last two elements from $A$, with positive sign unless the last block is descending (this implies that the last two pieces must be singletons).

To check this last statement it suffices to check that
\begin{align*}
	f([0] \ot [1,2]) &= [0]
	&
	f([0,1] \ot [2,0]) &= [0,1]
	\\
		f([0,1] \ot [2]) &= [0]
	&
	f([0,1] \ot [2,1]) &= 0
\end{align*}
since all the other cases are permutations of these three.

	\sloppy
	\printbibliography
\end{document}